\documentclass[11pt]{amsart}
\usepackage[T1]{fontenc}
\usepackage{babel}
\usepackage{amsmath}
\usepackage{amsthm}
\usepackage{amssymb}
\usepackage{color}
\usepackage{newclude}
\usepackage{amsmath, amssymb, amscd, mathrsfs, slashed, enumerate, url}
\usepackage{amsfonts}
\usepackage{color}
\usepackage{extsizes}
\usepackage{xcolor}
\usepackage[all,cmtip]{xy}
\usepackage{multicol}
\usepackage{enumitem} 
\usepackage{indentfirst}
\usepackage{latexsym}
\usepackage{bm}
\usepackage{graphicx}
\usepackage{subfigure,esint}
\usepackage{float}
\usepackage{verbatim}
\usepackage{comment}
\usepackage{esint}
\usepackage[top=1in, bottom=1in, left=1.10in, right=1.10in]{geometry}
\usepackage{epsfig,dsfont,amsthm,amsfonts,amsbsy}
\usepackage{overpic}
\usepackage{hyperref}

\newcommand{\Z}{\mathbb{Z}}
\newcommand{\N}{\mathbb{N}}
\newcommand{\R}{\mathbb{R}}
\newcommand{\C}{\mathbb{C}}

\newcommand{\F}{\mathcal{F}}
\newcommand{\G}{\mathcal{G}}
\newcommand{\ZT}{\mathbb{Z}/2}

\newcommand{\Tr}{\operatorname{Tr}}

\newcommand{\Sym}{\operatorname{Sym}}
\renewcommand{\d}{\operatorname{d}}
\newcommand{\del}{\partial}

\renewcommand{\O}{\mathcal{O}}

\renewcommand{\Re}{\operatorname{Re}}

\newcommand{\pa}{\partial}

\newcommand{\Msch}{\MS}

\newcommand{\MS}{\mathcal{S}}

\newtheorem{theorem}{Theorem}[section]
\newtheorem{proposition}[theorem]{Proposition}
\newtheorem{lemma}[theorem]{Lemma}
\newtheorem{remark}[theorem]{Remark}
\newtheorem{question}[theorem]{Question}

\newtheorem{corollary}[theorem]{Corollary}
\newtheorem{definition}[theorem]{Definition}
\newtheorem{example}[theorem]{Example}

\newcommand{\MI}{\mathcal{I}}

\newcommand{\mbR}{\mathbb{R}}

\title{Metric perturbations and deformations of \texorpdfstring{$\vec{k}$}{k}-nondegenerate $\ZT$-harmonic 1-forms}
\author{Siqi He and Willem Adriaan Salm}
\begin{document}
\begin{abstract}
	We study metric perturbations and deformation theory for degenerate \(\mathbb{Z}/2\)-harmonic \(1\)-forms.
	For a natural class of degenerate examples, we prove that after a suitable perturbation of the ambient Riemannian metric, the form can be deformed to a nearby non-degenerate \(\mathbb{Z}/2\)-harmonic \(1\)-form.
	Our argument combines analysis of the leading coefficients in the local expansion under metric perturbations with a quantitative Nash--Moser implicit function theorem.
\end{abstract}
\maketitle

\section{Introduction}

\(\mathbb{Z}/2\)-harmonic \(1\)-forms have emerged as a natural analytic object in compactification problems in gauge theory.
Starting from Taubes' analytic compactification of moduli spaces of flat \(\mathrm{SL}_2(\mathbb{C})\) connections on \(3\)-manifolds, these forms appear as ideal boundary data describing the limiting behaviour of sequences of connections; see \cite{taubes2013compactness,Taubescompactness18,Tau13a,walpuskizhang2019compactness}. 
They have since played a role in related analytic and geometric questions, providing a useful bridge between gauge theory, low-dimensional topology, and geometric analysis; see for instance \cite{BeraWalpuski2025Dirac,Chen2025Perturbation,Donaldson2021,EsfahaniLi2024Fueter,he23branched,He2025, HeParkerWalpuski26Deformation,HeWentworthZhang2024Z2, Parker2026Deformations, Salm24,takahashi2015moduli, Yan2025Nondegenerate}.

Let \((M,g)\) be a Riemannian manifold.
Let \(\Sigma\subset M\) be a co-oriented codimension-two submanifold, and let \(\mathcal{I}\) be a flat real line bundle over \(M\setminus \Sigma\) with monodromy \(-1\) along small loops linking \(\Sigma\).
A \(\mathbb{Z}/2\)-harmonic \(1\)-form is a triple \((\Sigma,\mathcal{I},\omega)\), where \(\omega\) is a smooth \(\mathcal{I}\)-valued harmonic \(1\)-form on \(M\setminus \Sigma\) such that \(|\omega|\) extends continuously across \(\Sigma\).
Equivalently, \(\omega\) may be viewed locally as a $\pm$-valued harmonic \(1\)-form with singular set \(\Sigma\), reflecting the nontrivial monodromy of \(\mathcal{I}\).

Assume \(\Sigma\) is smooth.
Results of \cite{Donaldson2021,Mazzeo1991} imply that \(\omega\) admits a local expansion. Namely, if $N$ is the normal bundle of $\Sigma$, then $\mathcal{I}$ determines a square root $N^{\frac{1}{2}}$ in the appropriate sense. Given a complex normal coordinate $z$ on $N$ and a local coordinate \(t\) on $\Sigma$, one has
\[
\omega = \d\Re\bigl(f_k(t)\, z^{k+\frac12}\bigr) + o\!\left(|z|^{k}\right),
\]
for some integer \(k\ge 1\) and a local section \(f_k\) of \(N^{-(k+\frac12)}\).

\begin{definition}[$\vec k$-nondegenerate]
	\label{def_k_nondegenerate}    Given a connected component $\Sigma_i$ of $\Sigma$, we say that \((\Sigma,\mathcal{I},\omega)\) is \emph{\(k\)-nondegenerate along $\Sigma_i$}, if \(k\geq 1\) is the smallest index for which \(f_k\not\equiv 0\) and \(f_k\) is nowhere vanishing along \(\Sigma_i\).
    Moreover, given a multi-index $\vec{k} = (k_1, \ldots, k_{b_0(\Sigma)})$, we say that  \((\Sigma,\mathcal{I},\omega)\) is \emph{\(\vec{k}\)-nondegenerate} if for each connected component $\Sigma_i \subset \Sigma$, the $\Z_2$-harmonic 1-form \((\Sigma,\mathcal{I},\omega)\) is \emph{\(k_i\)-nondegenerate} along $\Sigma_i$.
\end{definition}

The case \(\vec{k}=(1, \ldots, 1)\) is the classical non-degeneracy condition used in the literature (see for example \cite{Donaldson2021,Parker2026Deformations}), while \(k_i\geq 1\) describes a controlled finite-order degeneracy.
For classically non-degenerate \(\mathbb{Z}/2\)-harmonic \(1\)-forms, the resulting deformation theory leads to a well-posed local moduli problem and supports transversality and gluing arguments, with applications ranging from gauge-theoretic compactifications to calibrated geometry; see \cite{Donaldson2021,Parker2026Deformations,Yan2025Nondegenerate}.

From an analytic point of view, the non-degeneracy condition is crucial: It provides uniform control of the linearized deformation problem near the singular locus.
When non-degeneracy fails, this uniform control breaks down and the standard deformation theory no longer applies.
Despite the progress above, degeneracies are common and often unavoidable.
This paper addresses the following question.

\begin{question}\label{question_main}
	Given a degenerate \(\mathbb{Z}/2\)-harmonic \(1\)-form, can one perturb the ambient metric and deform the form to remove the degeneracy?
\end{question}

A basic obstruction is topological: the relevant leading coefficient naturally lives in a fractional power \(N^{-3/2}\) of the complex normal bundle \(N\to \Sigma\). If \(N^{-3/2}\) is nontrivial then any section must vanish somewhere, forcing degeneracy.
Thus, one cannot hope to avoid degenerate \(\mathbb{Z}/2\)-harmonic \(1\)-forms in general, and the deformation theory in the degenerate regime requires new ideas. 

In this paper we give an affirmative answer to Question~\ref{question_main} for \(\vec{k}\)-nondegenerate \(\mathbb{Z}/2\)-harmonic \(1\)-forms. Namely,

\begin{theorem}\label{thm:k-degenerate-theorem}
	Let \((\Sigma,\mathcal{I},\omega_0)\) be a \(\vec{k}\)-nondegenerate \(\mathbb{Z}/2\)-harmonic \(1\)-form on \((M,g_0)\), such that $N^{-\frac{3}{2}}$ is trivial.
	Then there exists a one-parameter family of metrics \(g_s\) extending \(g_0\), such that for each \(s\in(0,1)\) there exist non-degenerate \(\mathbb{Z}/2\)-harmonic \(1\)-forms \((\Sigma,\mathcal{I},\omega_s)\) with respect to \(g_s\), satisfying the following:
	\begin{itemize}
		\item [(i)] In the cohomology class of $L^2$-bounded $\Z/2$-harmonic 1-forms, \(\omega_0\) and \(\omega_s\) represent the same element.

		\item [(ii)] Given any compact set \(K\subset M\setminus \Sigma_0\), the family \(g_s\) can be chosen so that \(g_s|_K=g_0|_K\).
	\end{itemize}
\end{theorem}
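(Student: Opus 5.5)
The plan is to treat the problem as a family version of the deformation theory for $\mathbb{Z}/2$-harmonic $1$-forms, with the metric as the parameter. First I will construct a one-parameter family of metrics $g_s$ that agrees with $g_0$ outside a small tubular neighbourhood $U$ of $\Sigma$; choosing $U$ disjoint from $K$ gives (ii). I will then track how the leading coefficients of $\omega$ change. Concretely, write $g_s = g_0 + s h$ with $h$ supported in $U$. For fixed $(\Sigma,\mathcal{I})$ and fixed cohomology class, the $L^2$-bounded $\mathcal{I}$-valued harmonic $1$-form is determined by Hodge theory on the twisted complex. This gives a family $\tilde\omega_s$ with the same class, which handles (i), and its expansion $\tilde\omega_s = \d\Re(f^{s}_{1/2} z^{1/2}) + \d\Re(f_1^s z^{3/2}) + \cdots$ is smooth in $s$. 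The first-order variation $\dot f_1 = \frac{d}{ds}\big|_{s=0} f_1^s$ is obtained by solving $\Delta_{g_0}\dot\omega = -\dot\Delta\,\omega_0$ and reading off the $z^{3/2}$ coefficient of the solution.

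The key computation is the leading-coefficient analysis. Near a component $\Sigma_i$ where $\omega_0$ is $k_i$-nondegenerate, $\omega_0 \sim \d\Re(f_{k_i} z^{k_i+1/2})$ with $f_{k_i}$ nowhere vanishing. I will choose $h$ of the form $\chi(|z|)\,\Re(\phi(t)\,\bar z^{\,k_i-1}\,\cdots)$, a homogeneous perturbation of the normal metric. The aim is that $\dot\Delta\,\omega_0$ contains a term of homogeneity that forces a $z^{3/2}$ coefficient $\dot f_1 = c\,\phi f_{k_i}$ with $c\neq 0$ a universal constant. This is a model computation on $\mathbb{C}\times\mathbb{R}^{n-2}$ using Fourier modes $e^{i(j+1/2)\theta}$. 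Since $N^{-3/2}$ is trivial and $f_{k_i}$ is nonvanishing, I can prescribe $\dot f_1$ to be a nowhere-vanishing section of $N^{-3/2}$. Then $f_1^s = s\dot f_1 + O(s^2)$ is nonvanishing for small $s>0$, provided the $f_{1/2}$-term vanishes. But $\tilde\omega_s$ generally acquires an $f_{1/2}$ term, and $|\tilde\omega_s|$ is then unbounded, so $\Sigma$ must move.

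This forces the second step: correct $\tilde\omega_s$ by deforming $\Sigma$ to a nearby $\Sigma_s$, as in Donaldson's deformation theory, where the $z^{1/2}$ coefficients are killed by moving the branch locus. Once $f_1^s$ is nonvanishing, the map from normal displacements of $\Sigma$ to $f_{1/2}$-coefficients is an elliptic operator with tame but loss-of-derivatives estimates. Its linearization is roughly multiplication by $f_1^s$ composed with a first-order operator, and its inverse has norm of order $s^{-1}$. I will therefore apply a quantitative Nash--Moser implicit function theorem, with explicit dependence of the constants on $s$, in the spirit of Donaldson's argument. The perturbation $f_{1/2}$ to be removed is $O(s^2)$, since it can be arranged to vanish to first order by choosing $h$ orthogonal to the cokernel, or by adding a compensating metric term. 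The inverse, by contrast, blows up only like $s^{-1}$, so the smallness condition holds for small $s$. Finally I will pull back by a diffeomorphism carrying $\Sigma_s$ to $\Sigma$, supported in $U$, which replaces $g_s$ by the pulled-back metric. After reparametrizing $s$ to lie in $(0,1)$, this yields $(\Sigma,\mathcal{I},\omega_s)$, non-degenerate, in the same class, and with the metric unchanged on $K$.

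The main obstacle is this last quantitative Nash--Moser step. It requires uniform tame estimates for the linearized operator in the degenerating regime $f_1^s \to 0$. It also requires showing that the error in the $z^{1/2}$ coefficient is genuinely of higher order than the inverse bound loss. I would first try to secure this with weighted Hölder spaces adapted to $|z|$ and an explicit $s$-scaling argument.
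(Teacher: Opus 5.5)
The gap is in the quantitative balance, which is the heart of the problem. You plan to make the $z^{1/2}$-coefficient vanish only to first order, leaving $A(g_s,0)=O(s^2)$, and you argue that this beats an inverse of size $s^{-1}$. It does not.

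Even ignoring loss of derivatives, the Newton correction has size $\sim s^{-1}\|A(g_s,0)\|$. It must stay in the region $\|u\|\lesssim s$ where $|B_s(u)|\gtrsim s$. So an $O(s^2)$ error with an uncontrolled constant is at best borderline. Rescaling $s$ or $h$ does not help, since $\partial_s^2A$ is quadratic in $h$ while $\partial_sB$ is linear.

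With the loss of derivatives it is much worse. The tame bounds for $B_s(u)^{-1}$ in $C^{k,\alpha}$ are of order $s^{-k-2}$, and the quadratic-error constant is of order $s^{-3d}$. Feeding these into Raymond's scheme gives $\theta_0\sim s^{-2(T+2)}$. The admissible initial error is therefore $\theta_0^{-4}\sim s^{8(16d^2+41d+26)}$, where $d$ is the tame loss.

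The missing idea is to make $A(g_s,0)$ vanish to \emph{arbitrarily high} order in $s$. You need $\partial_s^iA(g_s,0)|_{s=0}=0$ for all $1\le i\le m$ and $B(g_s,0)=B_0+s\widetilde B+O(s^{m+1})$, with $m\ge 8(16d^2+41d+26)$. You also need the fact that $d$ is independent of $m$; otherwise the choice of $m$ is circular. A linear family $g_0+sh$ cannot achieve this, since its higher $s$-derivatives of $A$ are generically nonzero. The paper instead takes $g_s=g_0+\sum_{i\le m}\frac{s^i}{i!}T^{\langle i\rangle}$. It chooses $T^{\langle i\rangle}$ inductively, absorbing the lower-order terms $L_0^i$ that appear when $\Delta_{g_s}u_s=d^{*_s}\omega_{\mathrm{cpt}}$ is differentiated $i$ times.

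Your mechanism for the first-order variation also does not work as described. For smooth $h$ one has $\widehat T=O(1)$ and $|\omega_0|\sim r^{k_i-1/2}$. Hence $d^*\bigl(\widehat T(\omega_0^\sharp,\cdot)\bigr)=O(r^{k_i-3/2})$, and its local particular solution is $O(r^{k_i+1/2})$. For $k_i\ge 2$, no smooth homogeneous $h$ produces a $z^{3/2}$-term locally, and there is no universal $c$ with $\dot f_1=c\,\phi f_{k_i}$. The $z^{1/2}$- and $z^{3/2}$-coefficients of $\dot u$ come from the globally determined harmonic part. This is also why killing $\dot A$ ``by choosing $h$ orthogonal to the cokernel'' needs a real argument.

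The paper inverts the problem. It first prescribes $u_0^{(i)}=\chi f_i$, where $f_i$ is a Schwartz approximation of a harmonic section with leading term $\Re(\widetilde B_i z^{3/2})$ and no $z^{1/2}$-term (built by Borel summation). It finds $h_i$ with $df_i+d^*h_i\in\mathcal S$, using approximate exactness of anti-invariant forms near $\Sigma$. It then solves \emph{algebraically} for the metric:
\[
T^{\langle i\rangle}=2\,\omega_0\cdot\bigl(d(\chi f_i)+d^*(\chi h_i)-L_0^i\bigr)/|\omega_0|^2 .
\]
This uses that $T=(\omega_0\otimes\alpha+\alpha\otimes\omega_0)/|\omega_0|^2$ satisfies $\widehat T(\omega_0^\sharp,\cdot)=\alpha$.

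This is exactly where $\vec k$-nondegeneracy enters. The bound $|\omega_0|\gtrsim r^{k_i-1/2}$ keeps $T^{\langle i\rangle}$ Schwartz near $\Sigma$, hence smooth on $M$. Uniqueness of bounded harmonic sections then gives $u_0^{(i)}=\chi f_i$ exactly. So $\partial_s^iA|_{s=0}=0$ and $\partial_s^iB|_{s=0}=\widetilde B_i$ to every order.

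Your final step, pulling $\Sigma_s$ back to $\Sigma$ by a diffeomorphism supported near $\Sigma$, is fine.
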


The proof has two main steps.
First, after a small perturbation of the ambient metric, we deform a \(\vec k\)-nondegenerate form to one with improved leading behaviour (allowing a controlled additional singular term).
Second, we apply a refined version of the Nash--Moser implicit function theorem to carry out a Donaldson-type deformation argument for the coupled metric--form problem.

A main difficulty here is quantitative rather than conceptual. Nash--Moser techniques play a central role in the deformation theory of \(\mathbb{Z}/2\)-harmonic \(1\)-forms and have been used extensively in the literature, see for example \cite{Donaldson2021,DonaldsonLehmann2024,Parker2026Deformations, Salm24}.
In our setting, however, the standard Nash--Moser framework is not sufficiently precise. We need a finer version in which all constants are tracked at different small scales, so that the admissible solvability radius can be determined explicitly.

In our case, there are two main competing scales we need to keep track of. First, for a generic perturbation of $\Sigma$, the norm $|\omega|$ will not extend continuously over $\Sigma$. For the perturbation argument to succeed we need to measure the failure of this extension. Secondly, we need to estimate the size of the right inverse of the linearized operator. Because \((\Sigma,\mathcal{I},\omega_0)\) is degenerate, this estimate will worsen when $g_s$ converges to $g_0$.

Only after keeping track of this balance can one determine an explicit neighbourhood in the target space for which invertibility holds.
In our setting this requires tracking the dependence of all Nash--Moser constants on the small scale dictated by the vanishing order, which cannot be obtained from a black-box choice of weighted Sobolev spaces.
This is the reason we work with a Nash--Moser scheme, where all tame estimates are formulated with explicit control of the small parameter at each step. For this we give an explicit proof in Appendix \ref{sec:appendix}.

\textbf{Acknowledgements.} We wish to express our gratitude to many people for their interest and helpful comments. Among them are Jiahuang Chen, Andriy Haydys, Rafe Mazzeo, Greg Parker, Thomas Walpuski. S.H. is partially supported by NSFC grant No.12288201 and No.2023YFA1010500.

\section{\texorpdfstring{$\ZT$}{Z/2} harmonic 1-form over a closed manifold}
In this section we introduce the analytic foundations of \(\ZT\)-harmonic \(1\)-forms,
as developed in \cite{Donaldson2021,Tau13a} and refined in subsequent work. See also \cite{BeraWalpuski2025Dirac,MazzeoHaydysTakahashi} for further background and additional references. We first revisit \(L^2\)-bounded \(\ZT\)-harmonic \(1\)-forms and recall their \(L^2\)-Hodge theoretic interpretation via the canonical double branched covering. Secondly, we explain their local expansions and decay behaviour near the singular set. Finally, we finish this sections with a few examples.

\subsection{Hodge theory for \texorpdfstring{$L^2$}{L2}-bounded \texorpdfstring{$\ZT$}{Z/2}-harmonic 1-forms}
\begin{definition}
	Let $(M,g)$ be a closed Riemannian manifold, and let $\Sigma\subset M$ be a co-oriented
	embedded submanifold of codimension two. Let $\mathcal I$ be a flat real line bundle over
	$M\setminus\Sigma$ with monodromy $-1$ around small loops linking $\Sigma$.
	An \emph{$L^2$-bounded $\ZT$-harmonic $1$-form} is a triple $(\Sigma,\mathcal I,\omega)$, where
	$\omega\in \Omega^1(M\setminus\Sigma;\:\mathcal I)$ satisfies:
	\begin{enumerate}
		\item $\d \omega=0$ and $\d^\ast\!\omega=0$ (Here $\d$ is computed using the flat connection on
		$\mathcal I$ and $d^\ast$ is the formal adjoint with respect to $g$), and
		\item $\displaystyle \int_{M\setminus\Sigma}|\omega|^2<\infty$.
	\end{enumerate}
\end{definition}

The notion of $L^2$-bounded $\ZT$-harmonic $1$-forms is weaker than that of $\ZT$-harmonic $1$-forms
since it does not require $|\omega|$ to extend continuously across $\Sigma$. Nevertheless, there is a useful Hodge theory in this setting.

Let $\pi\colon \widehat M\to M\setminus\Sigma$ be the unique double cover that trivializes $\pi^\ast\mathcal I$,
and let $\hat g:=\pi^\ast g$ be the induced cone metric on $\widehat M$. Denote by $H^1_-(\widehat M)$ the
anti-invariant part of the de Rham cohomology under the $\mathbb{Z}_2$ action of the deck transformation. For an $L^2$-bounded
$\ZT$-harmonic $1$-form $(\Sigma,\mathcal I,\omega)$, the pullback $\hat\omega:=\pi^\ast\omega$ is an
ordinary $1$-form on $\widehat M$, anti-invariant under the deck transformation, and satisfies
\[
\d\hat\omega=0,\qquad \d^\ast_{\hat g}\hat\omega=0.
\]
By the $L^2$ Hodge theorem for Lipschitz metrics, $\hat\omega$ determines a cohomology class
$[\hat\omega]\in H^1_-(\widehat M)$.

Conversely, for every $\sigma\in H^1_-(\widehat M)$ there exists a unique $L^2$-bounded $\ZT$-harmonic
$1$-form $(\Sigma,\mathcal I,\omega)$ such that $[\pi^\ast\omega]=\sigma$
\cite{Donaldson2021,Hunsicker2005}. A convenient way to see existence is as follows: Let $U$ be a tubular neighbourhood of $\Sigma$ in $\widehat M$. Using the $\mathbb{Z}_2$-antisymmetry one has $H^1_-(U)=0$, and
so $\sigma$ admits a representative $\omega_{\mathrm{cpt}}\in\Omega^1(\widehat M)$ that is compactly
supported away from $\Sigma$. To obtain the harmonic representative, solve
\begin{equation}
	\label{eq:background:defining-equation-z2-on-double-cover}
	\Delta_{\hat g} u = \d^\ast \omega_{\mathrm{cpt}}
\end{equation}
for an anti-invariant function $u\in C^\infty(\widehat M)$. Then
\[
\widehat\omega := \omega_{\mathrm{cpt}} - \d u
\]
is $L^2$-harmonic on $(\widehat M,\hat g)$ representing $\sigma$. 
The push forward $\pi_*(\hat{\omega})$ is an $L^2$-bounded $\ZT$-harmonic $1$-form on $M$.

\subsection{Local expansions of \texorpdfstring{$\ZT$}{Z/2}-harmonic 1-forms}
\label{sec:background:local-expansions}
A basic analytic input in this subject is that $\ZT$-harmonic objects admit precise asymptotic
expansions near the singular set; see
\cite{BeraWalpuski2025Dirac, Donaldson2021,MazzeoHaydysTakahashi,Mazzeo1991}.
We briefly recall the form of these expansions and the associated leading coefficients.

We first set up some local coordinates.
Let $t=(t_i)$ be local coordinates on an open set in $\Sigma$, and let $U$ be the corresponding
tubular neighbourhood in $M$. On $U\setminus\Sigma$ we use polar normal coordinates
$(r,\phi,t)$, where $r$ is the distance to $\Sigma$ and $\phi$ is the angular variable in the
oriented normal plane. Setting $z=re^{i\phi}$ identifies the oriented normal bundle $N$ with a
complex line bundle. Using $\mathcal{I}$, we can also consider the bundle $N^{1/2}$ and equip it with the local coordinate $z^{1/2}=r^{1/2}e^{i\phi/2}$.

In these coordinates we can write down the asymptotic expansion near $\Sigma$.
Explicitly, if $u\in \Gamma(\mathcal I)$ is harmonic in a neighbourhood of $\Sigma$, then $u$ admits a
polyhomogeneous expansion of the form
\[
u \sim \sum_{p=0}^\infty \sum_{q=0}^p u_{pq}(t,\phi)\, r^{p+\frac12}\,(\log r)^q.
\]
Concretely, if
\(
u_k := \sum_{p=0}^{k} \sum_{q=0}^p u_{pq}(t,\phi)\, r^{p+\frac12}\,(\log r)^q,
\)
then $u-u_k=o(r^{k+1})$ as $r\to 0$, i.e.
\[
\lim_{r\to 0}\, \bigl\|r^{-k-1}(u-u_k)\bigr\|_{C^0(\Sigma\times\{r\}\times S^1)}=0.
\]

Donaldson \cite{Donaldson2021} also gave a description of the leading order terms of an $L^2$-bounded
$\ZT$-harmonic $1$-form $\omega$. Namely, on $U$ there exists a complex-valued function $A(t)$ such that
\[
\omega = \d\Re\bigl(A(t)\, z^{\frac12}\bigr) + o(1).
\]
Although this expression uses local coordinates, the coefficient $A$ is intrinsically a section of
$N^{-1/2}$. Moreover, $A\equiv 0$ if and only if $\omega$ is a genuine $\ZT$-harmonic $1$-form (i.e.\ when $|\omega|$ extends
continuously across $\Sigma$). In that case there exists a global section
$B\in \Gamma(N^{-3/2})$ such that
\[
\omega = \d\Re\bigl(B(t)\, z^{\frac32}\bigr) + o(r).
\]
A $\ZT$-harmonic $1$-form is nondegenerate in the classical sense if $B$ is nowhere vanishing.

\begin{remark}
Even when $A\not\equiv 0$, the next coefficient $B$ can still be defined \cite{Donaldson2021}.
In general there is a section $\mu\in\Gamma(N^{-1})$, determined by the mean curvature of $\Sigma$,
such that the expansion takes the form
\begin{equation}
	\label{eq:the-actual-expansion}
	\omega
	= d\,\Re\!\bigl(A(t)\,z^{\frac12}+B(t)\,z^{\frac32}\bigr)
	-\frac12\, d\,\Re\!\bigl(A(t)\,z^{\frac12}\bigr)\,\Re\!\bigl(\mu(t)\,z\bigr)
	+ o(r).
\end{equation}
We refer to $A$ and $B$ as the \emph{$A$--coefficient} and \emph{$B$--coefficient} of $\omega$.	

The coefficients $A$ and $B$ depend on the cohomology class of $\omega$ and on the metric $g$.
When it is helpful to emphasize this, we write $A(g,\cdot)$ and $B(g,\cdot)$, or
$A(g,[\omega],\cdot)$ and $B(g,[\omega],\cdot)$ respectively.
\end{remark}


Finally, to compare these coefficients under perturbations of $(g,\Sigma)$, one needs a
canonical identification of the normal data. Donaldson showed that, if $g$ is sufficiently close
to $g_0$ and $\Sigma$ is sufficiently close to $\Sigma_0$, then there exists a diffeomorphism
identifying the singular sets and their normal structures \cite[Proposition~4.2]{Donaldson2021}.
This allows one to view the expansions on a fixed normal bundle (and with respect to a fixed
reference metric) when studying variations.

\subsection{Decay behaviour and topological constraints.}
From the asymptotic expansion, one can read off the decay behaviour of $\omega$ near its singular set
$\Sigma$. Namely, if $\omega$ is a genuine $\ZT$-harmonic $1$-form then $|\omega|^{-1}=\O(r^{-1/2})$
near $\Sigma$, i.e.\ there exists a constant $C>0$ such that $|\omega|>C\,r^{1/2}$ near $\Sigma$.
Therefore, in this case $\Sigma$ is an isolated zero of $\omega$.

The same behaviour holds for $\vec{k}$-nondegenerate $\ZT$-harmonic $1$-forms. In this case there
exists an integer $l\in\Z$ such that $|\omega|^{-1}=\O(r^{\,l})$ near $\Sigma$. Moreover, this decay
behaviour is the only feature of $\vec{k}$-nondegenerate $\ZT$-harmonic $1$-forms that we will use in
this article.

However, such a decay behaviour does not occur for all $L^2$-bounded $\ZT$-harmonic $1$-forms. For
example:

\begin{example}
	On $\C_{z=re^{i\theta}}\times \mbR_t$, consider $\omega := \d\Re\bigl(t\,z^{\frac32}\bigr).$
	
	This is a $\ZT$-harmonic $1$-form with singular set $\Sigma=\{(0,t)\in \C\times \mbR\}$. However,
	the zero set of $\omega$ is not isolated; more precisely,
	\[
	|\omega|^{-1}(0)=\{(0,t)\}\cup \{(r=0,\theta=\tfrac{\pi}{3}+\tfrac{2m\pi}{3})\},
	\]
	for $m\in \Z$. In particular, $\omega$ is not $\vec{k}$-nondegenerate.
\end{example}

$\vec{k}$-nondegeneracy imposes not only an analytic condition, but also a mild and useful
restriction on the topology of the normal bundle. Namely, if $\omega$ is $\vec{k}$-nondegenerate
along $\Sigma$, then $N^{-(k+\frac12)}$ admits a nowhere vanishing section and hence is a trivial
complex line bundle. Equivalently, $(2k+1)c_1(N)=0\in H^2(\Sigma;\Z).$
Moreover, if a $\vec{k}$-nondegenerate $\ZT$-harmonic $1$-form can be deformed to a
$1$-nondegenerate one, then we additionally require $3c_1(N)=0$. Consequently, $\gcd(3,2k+1)\,c_1(N)=0,$
and note that $\gcd(3,2k+1)\in\{1,3\}$. In particular, if $\dim M=3,4$, then for a
$\vec{k}$-nondegenerate solution, $N$ must be a trivial bundle.

\subsection{Examples of \texorpdfstring{$\vec{k}$}{k}-nondegenerate \texorpdfstring{$\ZT$}{Z/2}-harmonic 1-forms.}
Finally, we present some known examples of $\vec{k}$-nondegenerate $\ZT$-harmonic $1$-forms.

\begin{example}[On a Riemann surface]
	Let $M$ be a Riemann surface with a complex structure induced by $g$ and let $q\in H^0(M,K_M^2)$ be a holomorphic quadratic differential. 
	Let $\Sigma$ be the set of zeros of $q$ of odd vanishing order.
	The real part of a local holomorphic square root $\sqrt{q}$, which is well-defined up to sign, defines a $\ZT$-harmonic $1$-form $\omega$ on $M\setminus \Sigma$.
	
	For any zero $p \in M$ of $q$ of odd vanishing order, and any choice of local holomorphic coordinate $z$ centred at $p$, one can write
	\[
	q = a_p\, z^{2k+1}\, dz\otimes dz + \text{higher order terms},
	\]
	where $a_p\neq 0$. This implies $\omega$ is $k$-nondegenerate along $p$.
\end{example}

\begin{example}[Ellipsoid in $\R^3$]
	There exists a $\vec{k}$-nondegenerate $\ZT$-harmonic $1$-form on $\R^3$ whose singular set $\Sigma$
	is an ellipsoid \cite{ChenHeYan26Calabi}.
\end{example}

\begin{example}[On a Seifert-fibered $3$-manifold]
	Let $\pi:Y\to \Sigma$ be a Seifert-fibered $3$-manifold, where $\Sigma$ is a $2$-dimensional orbifold
	equipped with an orbifold metric $g_{\Sigma}$. Let $i\eta$ be the connection $1$-form of a
	constant-curvature $U(1)$ connection on $Y$, and consider the Riemannian metric
	\[
	g_Y=\eta^2+\pi^*g_{\Sigma}
	\]
	on $Y$. Given an orbifold quadratic differential $q$ on $\Sigma$, the construction in
	\cite[Section~4.3]{HeParker2024} produces a $\ZT$-harmonic $1$-form on $Y$ associated to $\pi^*q$,
	which is $\vec{k}$-nondegenerate along the singular set.
\end{example}

\section{Metric perturbations of \texorpdfstring{$\ZT$}{Z/2} harmonic 1-forms}
\label{sec_metric_perturbation}
In this section we study how the $A$-- and $B$--coefficients of an $L^2$-bounded $\ZT$-harmonic
$1$-form vary under perturbations of the ambient metric. We show that suitable metric
perturbations allow one to prescribe the Taylor jet of the $B$--coefficient along $\Sigma$,
while forcing the $A$--coefficient to vanish to arbitrarily high order.

Before stating this precisely, we recall the notion of \emph{Schwartz functions}. In what follows,
we will repeatedly encounter remainder terms that are negligible to all orders as $r\to 0$ near
$\Sigma$.

\begin{definition}
	Let \(U\) be a neighbourhood of \(\Sigma\), and let \(r\) denote the distance function to \(\Sigma\).
	A section \(f\in \Gamma(\mathcal I|_{U\setminus\Sigma})\) is called \emph{Schwartz} on
	\(U\setminus\Sigma\) if \(f\) is smooth on \(U\setminus\Sigma\) and, for every pair of
	nonnegative integers \(p,q\), one has
	\[
	\sup_{U\setminus\Sigma}\, \bigl|\, r^{-p}\nabla^{q} f \,\bigr| < \infty,
	\]
	where \(\nabla\) denotes the covariant derivative induced by \(g\) together with the flat
	connection on \(\mathcal I\).
	We denote by \(\Msch\) the space of Schwartz sections of \(\mathcal I\) on \(U\setminus\Sigma\).
\end{definition}
In this section we prove the following metric-variation statement.

\begin{proposition}\label{prop:main-result-linear-theory}
	Let $(M,g_0)$ be a closed Riemannian manifold, and let $\Sigma\subset M$ be a smoothly embedded
	codimension-two submanifold with oriented normal bundle. Let $(\Sigma,\mathcal I,\omega_0)$ be a
	$\ZT$-harmonic $1$-form. Assume:
	\begin{enumerate}
		\item[(i)] $|\omega_0|>0$ on $U\setminus\Sigma$ for some tubular neighbourhood $U$ of $\Sigma$;
		\item[(ii)] for every $f\in\Msch$, one has $f/|\omega_0|\in \Msch$.
	\end{enumerate}
	Then, for any integer $m\ge 1$ and any prescribed smooth sections
	\[
	\tilde B_i \in \Gamma\!\bigl(N^{-(i+\frac12)}\bigr), \qquad 1\le i\le m,
	\]
	there exists a one-parameter family of smooth metrics $\{g_s\}_{s\in(0,1)}$ with $g_{s=0}=g_0$
	together with a family of $L^2$-bounded $\ZT$-harmonic $1$-forms $(\Sigma,\mathcal I,\omega_s)$ on
	$(M,g_s)$ representing the same cohomology class $[\omega_s]=[\omega_0]\in H^1_-(\hat M)$, such that
	the corresponding leading coefficients $A_s,B_s$ in \eqref{eq:the-actual-expansion} satisfy
	\begin{equation}
		\label{eq_variationABterms}
		\left.\pa_s^i A_s\right|_{s=0}=0,
		\qquad
		\left.\pa_s^i B_s\right|_{s=0}=\tilde B_i,
	\end{equation}
	for all $1\le i\le m$ and all $t\in\Sigma$. Moreover, for any fixed compact set $K\subset M\setminus
	\Sigma$, the family can be chosen so that $g_s|_K=g_0|_K$ for all $s$.
\end{proposition}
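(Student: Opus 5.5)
We will construct the family \(g_s\) as a formal power series in \(s\), truncated at order \(m\) and cut off near \(\Sigma\). We then read off the variations of \(A\) and \(B\) from the linearisation of \eqref{eq:background:defining-equation-z2-on-double-cover}.

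Fix \(\omega_{\mathrm{cpt}}\) representing \([\omega_0]\), supported away from \(U\). For a metric \(g\), the harmonic representative is \(\omega_g=\omega_{\mathrm{cpt}}-\d u_g\), with \(\Delta_g u_g=\d^\ast_g\omega_{\mathrm{cpt}}\). Hence the cohomology class is automatically fixed. Differentiating in \(s\), the first variation \(\dot u=\partial_s u_s|_{s=0}\) solves
\[
\Delta_{g_0}\dot u = -\dot{(\d^\ast)}\,\omega_0 ,
\]
and \(\dot\omega=-\d\dot u\). For metric variations \(h=\dot g\) supported in \(U\), the right-hand side is \(L_h\omega_0:=\d^\ast_{g_0}\bigl(\tfrac12(\Tr h)\omega_0-h(\omega_0^\sharp,\cdot)\bigr)\), up to sign. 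The higher derivatives \(\partial^i_s u_s\) satisfy the same equation, with the linear term in \(\partial_s^i g\) plus a source built from lower-order data.

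The key steps are as follows.

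\textbf{(a) Local realisability.} Given a target local term \(\d\Re(\tilde B z^{3/2})\), possibly with an \(A\)-type correction, we find a symmetric 2-tensor \(h\), supported in \(U\setminus K\), such that the \(\mathcal I\)-valued function \(\chi\,\Re(\tilde Bz^{3/2})\) can be written as \(\Delta^{-1}\) of \(L_h\omega_0\) modulo Schwartz terms. Concretely, we set \(v=\chi\Re(\tilde B z^{3/2})\) with \(\chi\) a cutoff near \(\Sigma\), compute \(f=\Delta_{g_0}v\), and solve \(L_h\omega_0=f\) pointwise. Choosing \(h=\phi\,\omega_0\otimes\omega_0/|\omega_0|^2\) reduces this to a scalar first-order transport equation \(\d^\ast(\phi\,\omega_0)\cdot c=f\) along the flow of \(\omega_0^\sharp\).

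This is exactly where hypotheses (i) and (ii) enter. Because \(v\) is harmonic to high order near \(\Sigma\), \(f\) differs from a Schwartz function only by a term supported in the annulus where \(\chi\) varies. Dividing by \(|\omega_0|\) preserves the Schwartz class by (ii), and (i) guarantees the quotients are defined on \(U\setminus\Sigma\). Thus \(h\) can be taken Schwartz near \(\Sigma\), in particular smooth across \(\Sigma\).

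\textbf{(b) Reading off the coefficients.} With this \(h\), we have \(\dot u=v+w\), where \(w\) solves \(\Delta w=\) (a Schwartz function plus the annular error). We arrange the annular error to be orthogonal to the cokernel. The \(A\)- and \(B\)-coefficients of \(\dot u\) are then those of \(v\) plus those of \(w\). The contribution of \(w\) comes from a global anti-invariant harmonic correction, whose leading coefficients we must cancel.

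To cancel them, we use the surjectivity (for the \(B\)-data) and the freedom to add a further Schwartz perturbation supported in the annulus. Equivalently, we solve for \(h\) directly so that \(L_h\omega_0=\Delta_{g_0}v\) exactly on \(U\), with \(v\) compactly supported in \(U\) and \(h\) supported where needed. This yields \(\dot A=0\) and \(\dot B=\tilde B_1\).

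\textbf{(c) Induction on \(i\).} Suppose \(g^{(i-1)}_s=g_0+\sum_{j<i}s^jh_j\) has been built. The \(i\)-th derivative of \(u_s\) equals the linear term in \(h_i\) plus a known source \(F_i\), which is Schwartz near \(\Sigma\) because all earlier \(h_j\) are. We solve for \(h_i\) as in (a), targeting \(A\)-coefficient \(0\) and \(B\)-coefficient \(\tilde B_i\) minus the coefficients of the solution of \(\Delta w=F_i\).

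Finally, we take \(g_s=g_0+\sum_{j\le m}s^jh_j\), rescaled if necessary so that it stays positive definite for \(s\in(0,1)\). Each \(h_j\) is supported in \(U\setminus K\), so \(g_s|_K=g_0|_K\).

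The main obstacle is step (a): solving the transport-type equation for \(h\) with Schwartz control uniformly up to \(\Sigma\), including the nonlocal harmonic correction in (b). I would first try the ansatz \(h=\phi\,\omega_0\otimes\omega_0/|\omega_0|^2\). This reduces the problem to the scalar equation \(\langle\d\phi,\omega_0\rangle+\phi\,\Delta\text{-terms}=f\). Near \(\Sigma\), I would solve it by dividing by \(|\omega_0|^2\), using (ii) repeatedly.
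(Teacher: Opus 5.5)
There is a genuine gap, and it sits in step (a), which is the heart of the proposition. Your overall framework matches the paper: you differentiate $\Delta_{g_s}u_s=\d^{*_s}\omega_{\mathrm{cpt}}$, induct with a lower-order source that is Schwartz near $\Sigma$, and aim for $\dot u=v$ exactly.

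\textbf{Why step (a) fails.} With $h=\phi\,\omega_0\otimes\omega_0/|\omega_0|^2$ you get $\Tr h=\phi$ and $h(\omega_0^\sharp,\cdot)=\phi\,\omega_0$. Hence $L_h\omega_0=\pm\tfrac12\langle \d\phi,\omega_0\rangle$, which is a transport equation along a line field defined only up to sign. Dividing by $|\omega_0|$ does not solve a differential equation.

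You need $\phi$ supported in $U$, both for $g_s$ to be smooth and for $g_s|_K=g_0|_K$. For such $\phi$ to exist, $f$ must integrate to zero in flow time along every integral curve that crosses $U$. The same holds along every separatrix running into $\Sigma$, where $\phi$ must vanish for $h$ to be continuous. Already for $\omega_0=\d\Re(z^{3/2})$ on flat space, generic flow lines (level sets of $\operatorname{Im} z^{3/2}$) enter and leave the tube, and three separatrices in each normal plane end on $\Sigma$. Your construction has no mechanism for imposing these infinitely many constraints.

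More fundamentally, Schwartz decay of the scalar $f=\Delta v$ is not what hypothesis (ii) can act on. What must be divided by $|\omega_0|^2$ is a \emph{1-form} $\alpha$ with $\d^*\alpha=\Delta v$, and it is $\alpha$ that must be Schwartz near $\Sigma$ and supported in $U$. The obvious choice $\alpha=\d v$ is only $O(r^{1/2})$.

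There is also a smaller issue: $v=\chi\Re(\tilde B z^{3/2})$ is not harmonic to high order. Mean-curvature and metric corrections give $\Delta v=O(r^{1/2})$. You first need a polyhomogeneous correction with Borel summation, as in Proposition~\ref{prop_existence_schwartz_approximation}.

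\textbf{How the paper avoids this.} It uses two ideas and never meets a transport equation.
\begin{enumerate}
\item Do not restrict to rank one. For any 1-form $\alpha$, the tensor $T=(\omega_0\otimes\alpha+\alpha\otimes\omega_0)/|\omega_0|^2$ satisfies $\widehat T(\omega_0^\sharp,\cdot)=\alpha$ pointwise (Lemma~\ref{lem:canonical-choice-hat-T}). So the inversion is purely algebraic.
\item Correct $\d f$, where $f$ is the Schwartz approximation, by a co-exact term. The form $*\d f$ is anti-invariant, vanishes on $\Sigma$, and is closed modulo $\mathcal S$. Radial integration (Lemma~\ref{lem:Z2-equivariant-Schwartz-forms-are-approximately-exact}, the Schwartz analogue of the vanishing of anti-invariant cohomology of the tube) then gives a 2-form $\beta$ with $\d f+\d^*\beta\in\mathcal S$.
\end{enumerate}
Now take $\alpha=\d(\chi f)+\d^*(\chi\beta)-L^m_0$. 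The resulting $T$ is supported in $U$ and is Schwartz near $\Sigma$ by (ii). Since $(\d^*)^2=0$, one gets $\Delta u^{(m)}_0=\d^*(\alpha+L^m_0)=\Delta(\chi f)$. Because bounded $\MI$-valued harmonic sections vanish, $u^{(m)}_0=\chi f$ exactly.

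This also makes your step (b) unnecessary. There is no cokernel to arrange orthogonality against, since the anti-invariant Laplacian is invertible. There is also no auxiliary solution $w$ whose $A$- and $B$-coefficients must be cancelled: the lower-order source (your $F_i$, the paper's $L^m_0$) is Schwartz near $\Sigma$ and is absorbed directly into $\alpha$.
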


The assumptions in Proposition~\ref{prop:main-result-linear-theory} are only needed to justify
division by $|\omega_0|$ in the inductive step; in particular, they are automatic in the
$\vec{k}$-nondegenerate situation considered later. The remainder of this section is devoted to
the proof of Proposition~\ref{prop:main-result-linear-theory}. The proof proceeds by induction on
the jet order and combines linearized variation formulas, a local exactness statement for
$\ZT$-equivariant closed forms near $\Sigma$, and the construction of local models with prescribed
leading terms and Schwartz Laplacian.

\subsection{General Variation formulas}
As a start, we will compute the variation formula under metric perturbations.
For this we first need to set up some notation.
Fix a $\mathbb Z/2$-harmonic $1$-form $(\Sigma,\mathcal I,\omega_0)$ on a closed Riemannian manifold $(M,g_0)$ of dimension $n$. 
Let $g_s$ be a 1-parameter family of Riemann metric, that begins at $g_0$. 
Let $*_s$ be the Hodge star operator and $\Delta_{g_s}$ be the Laplace operator defined by $g_s$. Like before, let $\widehat{M}$ be the double cover over $M$ branched along $\Sigma$ that trivializes $\mathcal{I}$.

Fix $\omega_{cpt} \in \Omega^1(M \setminus \Sigma; \: \mathcal{I})$ such that $[\omega_0] = [\omega_{cpt}] \in H^1_-(\widehat{M})$ and $\omega_{cpt} = 0$ near $\Sigma$. 
Let $(\Sigma,\mathcal I,\omega_s)$ be the ($L^2$-bounded) $\ZT$-harmonic 1-form on $(M, g_s)$ such that $[\omega_s] = [\omega_{0}] \in H^1_-(\widehat{M})$. 
We can always write
$$
\omega_s = \d u_s + \omega_{cpt}
$$
for some $u_s \in \Gamma(\mathcal{I})$. Like in \eqref{eq:background:defining-equation-z2-on-double-cover},
$u_s$ is determined by the equation
\begin{equation}
	\label{eq_u_s}
	\Delta_{g_s} u_s=\d^{*_{s}}\omega_{cpt}.
\end{equation}
We also have an asymptotic expansion for $u_s$ like in \eqref{eq:the-actual-expansion}. Using that expansion, we denote $A_s(t) := A(g_s, [\omega_0], t)$ and $B_s(t) := B(g_s, [\omega_0], t)$.

To denote the derivatives, write $u_s^{(m)}:=\partial_s^m u_s$ and $g_s^{(m)}:=\partial_s^m g_s$.
Also, let $\omega_s^\sharp\in \Gamma(TM\otimes \mathcal{I})$ denote the $g_s$-dual section of the $1$-form $\omega_s$.
Finally, consider the 2-tensor
\begin{equation}\label{eq:Tms}
	\widehat T^{\langle m\rangle}_s
	:= g_s^{(m)} \;-\; \frac12\,\Tr_{g_s}(g_s^{(m)})\, g_s \;\in\; \Gamma(\operatorname{Sym}^2(T^*M)).
\end{equation}
\begin{proposition}\label{prop:linearized-equation}
	The first derivative $u_s^{(1)}$ satisfies
	\begin{equation}\label{eq_firstequation}
		\d^{*_s}\!\big(\!\d u_s^{(1)} - \widehat T^{\langle 1\rangle}_s(\omega_s^\sharp,\cdot)\big) = 0.
	\end{equation}
	More generally, for each $m\ge 1$,
	\begin{equation}
		\label{eq_higherorderequation}
		d^{*_s}\big(\!\d u_s^{(m)} - \widehat T^{\langle m\rangle}_s(\omega_s^\sharp,\cdot) \;+\;L^m_s\big) = 0, 
	\end{equation}
	where $L^m_s$ is a linear combination of lower-order terms depending on
	\[
	g_s,\; g_s^{(1)},\ldots,g_s^{(m-1)},\qquad
	\nabla u_s^{(1)},\ldots,\nabla u_s^{(m-1)},\qquad
	\omega_s^\sharp,
	\]
	and every term in $L^m_s$ contains at least one factor of $g_s^{(j)}$ for some $1\le j\le m-1$.
\end{proposition}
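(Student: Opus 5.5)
The approach is to rewrite the harmonicity condition in a metric-free way and then differentiate in $s$. Since $\d \omega_s = 0$ automatically, the equation $\d^{*_s}\omega_s = 0$ is equivalent to
\[
\d\bigl(\ast_s \omega_s\bigr) = 0 .
\]
In local coordinates this reads $\partial_a\bigl(\sqrt{\det g_s}\, g_s^{ab}(\omega_s)_b\bigr) = 0$ for $\omega_s = \d u_s + \omega_{cpt}$. The coordinate-free content is that the vector density $\sqrt{\det g_s}\,\omega_s^\sharp$ is divergence-free. Since $\omega_{cpt}$ does not depend on $s$, we have $\partial_s \omega_s = \d u^{(1)}_s$.

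\textbf{First variation.} Differentiate the density $V_s := \sqrt{\det g_s}\, g_s^{-1}\omega_s$ once. We have $\partial_s \sqrt{\det g_s} = \tfrac12 \Tr_{g_s}(g_s^{(1)})\sqrt{\det g_s}$ and $\partial_s g_s^{-1} = -g_s^{-1} g_s^{(1)} g_s^{-1}$. Lowering the index with $g_s$ then gives
\[
g_s\bigl(\partial_s V_s\bigr) = \sqrt{\det g_s}\,\Bigl(\d u_s^{(1)} - g_s^{(1)}(\omega_s^\sharp,\cdot) + \tfrac12 \Tr_{g_s}(g_s^{(1)})\,\omega_s\Bigr).
\]
Because $\omega_s = g_s(\omega_s^\sharp,\cdot)$, the bracket equals $\d u_s^{(1)} - \widehat T^{\langle 1\rangle}_s(\omega_s^\sharp,\cdot)$. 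The divergence-free condition for $\partial_s V_s$ (which holds because $\partial_s$ commutes with the coordinate divergence $\partial_a$) becomes $\d^{*_s}$ of this $1$-form, yielding \eqref{eq_firstequation}.

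\textbf{Higher variations.} Apply $\partial_s^m$ to $\partial_a V_s^a = 0$ and expand $\partial_s^m V_s$ by the Leibniz rule as a sum over distributions of derivatives among three factors: $\sqrt{\det g_s}$, $g_s^{-1}$ and $\omega_s$. Derivatives of $\sqrt{\det g_s}$ and of $g_s^{-1}$ are, by Faà di Bruno, polynomials in $g_s^{-1}$ and $g_s^{(j)}$ ($j\ge 1$). Derivatives of $\omega_s$ are $\d u^{(j)}_s$.

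The terms fall into three groups:
\begin{itemize}
\item The term with all $m$ derivatives on $\omega_s$ gives $\d u^{(m)}_s$.
\item The terms with all $m$ derivatives on a single metric factor, none on $\omega_s$, contribute exactly as in the first-order computation with $g^{(1)}$ replaced by $g^{(m)}$, plus products of lower $g^{(j)}$. The pieces linear in $g^{(m)}$ give $-\widehat T^{\langle m\rangle}_s(\omega_s^\sharp,\cdot)$.
\item Every remaining term either splits the derivatives so that each factor receives fewer than $m$, or is a nonlinear Faà di Bruno piece. It therefore involves some $g^{(j)}$ with $1\le j\le m-1$, multiplied by $\d u^{(i)}_s$ with $i<m$ (or $\omega_s^\sharp$) and by metric factors.
\end{itemize}
Collect all of the latter into $L^m_s$, after lowering the index with $g_s$ and dividing by $\sqrt{\det g_s}$. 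Rewriting $\partial_a(\cdot)$ as $\sqrt{\det g_s}\,\d^{*_s}$ of the corresponding $1$-form gives \eqref{eq_higherorderequation}.

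\textbf{Main obstacle.} The only step needing care is the bookkeeping. One must check that, apart from the two top-order terms, nothing escapes $L^m_s$, and in particular that every term in $L^m_s$ carries a factor $g^{(j)}_s$ with $1\le j\le m-1$. This holds because a term with no such factor must put all derivatives either on $\omega_s$ or entirely, linearly, on one metric factor, and these two cases are exactly the isolated top-order terms.

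A minor point is that the computation is valid on $M\setminus\Sigma$, where $u_s$ is smooth and differentiable in $s$. This smoothness should follow from \eqref{eq_u_s} and elliptic regularity on $\widehat M$ with smooth dependence on $g_s$. That is all that is needed, since the equations are stated pointwise away from $\Sigma$.
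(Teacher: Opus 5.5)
Your proposal is correct and is essentially the paper's argument: the paper differentiates $\d(*_s\omega_s)=0$ using the first-variation formula $\partial_s(*_s)v = *_s\bigl(\tfrac12\Tr_{g_s}(g_s^{(1)})\,v - g_s^{(1)}(v^\sharp,\cdot)\bigr)$, and that formula is exactly your coordinate computation of $\partial_s\bigl(\sqrt{\det g_s}\,g_s^{-1}\bigr)$. Your Leibniz/Fa\`a di Bruno split over $(\sqrt{\det g_s},\,g_s^{-1},\,\omega_s)$ simply spells out the bookkeeping behind the paper's brief claim that every term of $L^m_s$ carries some $g_s^{(j)}$ with $1\le j\le m-1$.
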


\begin{proof}
	Recall that for any $1$-form $v$ we have the variation formula
	\begin{equation}\label{eq_derivative_Hodge_star}
		\partial_s(*_s)v
		=
		*_s\!\left(\frac12\,\Tr_{g_s}(g_s^{(1)})\,v\;-\; g_s^{(1)}\!\big(v_s^\sharp,\cdot\big)\right),
	\end{equation}
	where $v_s^\sharp$ is the $g_s$-dual vector field of $v$.
	
	By \eqref{eq_u_s} we have $\d *_s \:\omega_s = \d*_s(du_s+\omega_{\mathrm{cpt}})=0$.
	Differentiating with respect to $s$ gives
	\[
	\d*_s \d u_s^{(1)}
	=
	- \d(\partial_s*_s)\:\omega_s
	=
	\d*_s\!\left(g_s^{(1)}(\omega^\sharp,\cdot)-\frac12\,\Tr_{g_s}(g_s^{(1)})\,\omega_s\right),
	\]
	where the last equality follows from \eqref{eq_derivative_Hodge_star}, and this implies \eqref{eq_firstequation}.

	For higher derivatives, differentiate \eqref{eq_u_s} $m$ times.
	The term $\d^{*_s}\!\d u_s^{(m)}=\Delta_{g_s}u_s^{(m)}$ is the only contribution involving $u_s^{(m)}$.
	All remaining terms arise from differentiating $\d^{*_s}$ (equivalently, $*_s$), and hence involve derivatives of the metric.
	The unique term containing $g_s^{(m)}$ appears linearly through $\widehat T^{\langle m\rangle}_s$,
	while every other term contains at least one factor among $g_s^{(1)},\ldots,g_s^{(m-1)}$ and at most derivatives
	$\nabla u_s^{(1)},\ldots,\nabla u_s^{(m-1)}$.
	This yields
	\begin{equation}
		\label{eq_higher_order_equations}
			\Delta_{g_s}u_s^{(m)}
		=
		\d^{*_s}\!\left(\widehat T^{\langle m\rangle}_s(\omega_s^\sharp,\cdot)+L^m_s\right),
	\end{equation}
	where $L^m_s$ consists of lower-order terms with the stated dependence.
\end{proof}

Notice that
$$
\Tr(\widehat{T}^{\langle m \rangle}_s) = \Tr(g^{(m)}_s) - \frac{1}{2} \Tr(g^{(m)}_s) \Tr(g_s) = \left(1 - \frac{n}{2}\right) \Tr(g^{(m)}_s).
$$
Hence, if $n \not = 2$, $g^{(m)}_s$ is fully determined by $\widehat{T}^{\langle m \rangle}_s$ by the relationship
\begin{equation}
	\label{eq_relationshipgandT}
	g^{(m)}_s = \widehat{T}^{\langle m \rangle}_s + \frac{1}{2-n}\Tr(\widehat{T}^{\langle m \rangle}_s) g_s.
\end{equation}
Therefore, if we find a satisfying solution of \eqref{eq_higherorderequation} in terms of $\widehat{T}^{\langle m \rangle}_0$, we automatically find a metric variation using \eqref{eq_relationshipgandT}. 

Instead of solving for $u^{(m)}_0$ in terms of $g^{(m)}_0$, we will find $g^{(m)}_0$ in terms of $u^{(m)}_0$. Using the De Rham cohomology on $\widehat{M}$, one can show there exists a family $\{\sigma^{ m }_0\}_{m \in \N} \in \Gamma(T^*M \otimes \mathcal{I})$, such that $\d^* \!\sigma^{ m }_0 = 0$ and
\begin{equation}
	\label{eq_firstequationAfterHodgeTheory}
	\widehat T^{\langle m\rangle}_0(\omega_0^\sharp,\cdot) =
	\d u_0^{(m)} + \sigma^{ m }_0 + L^m_0.
\end{equation}
We claim that wherever $\omega_0 \not = 0$, \eqref{eq_firstequationAfterHodgeTheory} can always be solved for $g^{(m)}_0$ using the following canonical choice:
\begin{lemma}\label{lem:canonical-choice-hat-T}
	Let $\alpha\in \Gamma(T^*M \otimes \MI)$ and define the symmetric $2$-tensor
	$T\in \Gamma(\Sym^2 T^*(M\setminus\Sigma))$ by
	\[
	T := \frac{\omega_0\otimes \alpha + \alpha\otimes \omega_0}{|\omega_0|^2} .
	\]
	Let $\widehat T := T-\frac12 \Tr_g(T)\,g$. Then, wherever $\omega_0 \not = 0$,
	\[
	\widehat T(\omega_0^\sharp,\cdot)=\alpha .
	\]
\end{lemma}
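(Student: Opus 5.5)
The proof is a direct pointwise computation at a point where $\omega_0\neq 0$; no analysis is needed. First I check that $T$ is a genuine (untwisted) symmetric $2$-tensor on $M\setminus\Sigma$. Both $\omega_0$ and $\alpha$ take values in $\MI$, so $\omega_0\otimes\alpha$ is a section of $T^*M\otimes T^*M\otimes\MI\otimes\MI$. Since $\MI$ is a real line bundle with structure group $\{\pm1\}$, $\MI\otimes\MI$ is canonically trivial. Likewise $|\omega_0|^2$ is a well-defined function. Hence $T\in\Gamma(\Sym^2T^*(M\setminus\{\omega_0=0\}))$ makes sense and is symmetric by construction. Locally one may simply pick a sign for $\omega_0$ and $\alpha$ simultaneously; the formula is invariant under flipping both.

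Next I evaluate $T(\omega_0^\sharp,\cdot)$ using $\omega_0(\omega_0^\sharp)=|\omega_0|^2$ and $\alpha(\omega_0^\sharp)=\langle\alpha,\omega_0\rangle$:
\[
T(\omega_0^\sharp,\cdot)=\frac{|\omega_0|^2\,\alpha+\langle\alpha,\omega_0\rangle\,\omega_0}{|\omega_0|^2}
=\alpha+\frac{\langle\alpha,\omega_0\rangle}{|\omega_0|^2}\,\omega_0 .
\]
Then I compute the trace. Since $\Tr_g(\beta\otimes\gamma)=\langle\beta,\gamma\rangle$, we get $\Tr_g T=2\langle\omega_0,\alpha\rangle/|\omega_0|^2$. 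Also $g(\omega_0^\sharp,\cdot)=\omega_0$. Therefore
\[
\tfrac12\Tr_g(T)\,g(\omega_0^\sharp,\cdot)=\frac{\langle\alpha,\omega_0\rangle}{|\omega_0|^2}\,\omega_0 .
\]
Subtracting this from the previous display cancels the extra term exactly, which gives $\widehat T(\omega_0^\sharp,\cdot)=\alpha$.

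There is no real obstacle here. The only points requiring care are the bookkeeping of the $\MI$-twisting, so that $T$ is an honest metric variation, and the normalization $\Tr_g(\omega_0\otimes\alpha)=\langle\omega_0,\alpha\rangle$, so that the factor $\tfrac12$ matches the factor $2$ from symmetrization. It is worth remarking that $T$ blows up where $\omega_0\to0$, which is why the lemma is stated only where $\omega_0\neq0$. Hypothesis (ii) of Proposition~\ref{prop:main-result-linear-theory} will be what controls $T$ near $\Sigma$ when $\alpha$ is Schwartz.
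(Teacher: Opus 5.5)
Your proposal is correct and matches the paper's proof: you evaluate $T(\omega_0^\sharp,\cdot)=\alpha+\frac{\langle\alpha,\omega_0\rangle}{|\omega_0|^2}\,\omega_0$, compute $\Tr_g T=2\langle\alpha,\omega_0\rangle/|\omega_0|^2$, and note that the trace correction cancels the extra term exactly. Your remark that $\MI\otimes\MI$ is canonically trivial, so that $T$ is an honest untwisted symmetric tensor, is a point the paper leaves implicit.
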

\begin{proof}
	For any vector field $X$ we have
	\[
	T(\omega_0^\sharp,X)= 
	\frac{\omega_0(\omega_0^\sharp)\alpha(X)+\alpha(\omega_0^\sharp)\omega_0(X)}{|\omega_0|^{2}}
	=\alpha(X)+ \frac{\langle \alpha,\omega_0\rangle}{|\omega_0|^{2}}\,\omega_0(X).
	\]
	Since $\Tr_g(\omega_0\otimes\alpha)=\langle \omega_0,\alpha\rangle$ and $\Tr_g(\alpha\otimes\omega_0)=\langle \alpha,\omega_0\rangle$,
	we have $\Tr_g(T)=\frac{2\langle \alpha,\omega_0\rangle}{|\omega_0|^2}$, hence
	\[
	\widehat T(\omega_0^\sharp,X)
	=
	T(\omega_0^\sharp,X)-\frac12\Tr_g(T)\,g(\omega_0^\sharp,X)
	=
	\alpha(X).
	\]
\end{proof}
Our goal is to find a solution for $\widehat{T}_0^{\langle m \rangle}$ that smoothly extend over the whole of $M$. Away from $\Sigma$, we can use bump functions to make $\widehat{T}_0^{\langle m \rangle}$ well-defined. Near $\Sigma$ this is not trivial. For example, a pointwise bound of \eqref{eq_firstequationAfterHodgeTheory} for the first derivative yields,
$$
\left|\d u_0^{(1)} + \sigma^{1}_0\right| \le |\widehat{T}^{\langle1 \rangle}_0| \cdot |\omega_0|
$$
Because we want to control $B_s$ in our variation, $u^{(1)}_0 = \O(r^{3/2})$. Also, we want that $\widehat{T}^{\langle 1 \rangle}_0$ extends to a smooth section over a compact manifold, and thus $\widehat{T}^{\langle 1 \rangle}_0 = \O(1)$. Finally, we need an estimate on $|\omega_0|^{-1}$ near $\Sigma$. For a degenerate $\ZT$-harmonic 1-form there is no restriction on $|\omega_0|^{-1}$, but for $\vec{k}$-nondegenerate $\ZT$ harmonic 1-forms, we have
$
|\omega_0|^{-1} = \O\left(r^{\frac12-\lceil\vec{k}\rceil}\right).
$
Therefore, for the first derivative we have to balance $\sigma^1_0$ such that 
$$
\left|\O(r^{\frac12}) + \sigma^{1}_0 \right| = \O(1)\cdot \O\left(r^{\lceil\vec{k}\rceil-\frac12}\right).
$$
To determine the freedom we have in $\sigma^1_0$, we first need to consider the cohomology of $\Z_2$-equivariant forms.
\begin{lemma}
	\label{lem:Z2-equivariant-closed-forms-are-exact-near-Sigma}
	Let $U \subset \widehat{M}$ be a tubular neighbourhood of the singular set $\Sigma$. Any closed form $\eta$ on $U$ that is anti-invariant under the deck transformation is exact.
\end{lemma}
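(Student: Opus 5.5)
The plan is to reduce the statement to a computation of the anti-invariant de Rham cohomology of the tubular neighbourhood, and to show that this cohomology vanishes.

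\emph{Step 1: deformation retraction.} Let $U \subset \widehat M$ be the preimage of a tubular neighbourhood $V$ of $\Sigma$ in $M$. Radial scaling $z\mapsto \lambda z$, $\lambda\in(0,1]$, on the normal fibres commutes with the deck transformation $\tau$. Its lift to $\widehat M$ is $z^{1/2}\mapsto \lambda^{1/2}z^{1/2}$, which is $\tau$-equivariant. Hence $U\setminus\Sigma$ deformation retracts equivariantly onto the unit circle bundle $\widehat S$ of $N^{1/2}$. Here $\widehat S \to S(N)$ is the connected fibrewise double cover of the circle bundle, and $\tau$ acts on each fibre circle by the rotation $e^{i\psi}\mapsto -e^{i\psi}$.

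If we instead work on the branched cover including $\Sigma$, then $U$ retracts equivariantly onto $\Sigma$ itself, which is fixed by $\tau$. In that case $\tau$ acts trivially on $H^\ast(U)\cong H^\ast(\Sigma)$, so the anti-invariant part vanishes immediately.

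\emph{Step 2: averaging.} For a closed anti-invariant form $\eta$, the equivariant homotopy formula gives $\eta = \mathrm{d}\beta$ for some $\beta$ (once the cohomology class is shown to vanish). Replacing $\beta$ by $\tfrac12(\beta - \tau^\ast\beta)$ produces an anti-invariant primitive. So it suffices to show that the class $[\eta]$ vanishes in $H^\ast(U)$, or in $H^\ast(U\setminus\Sigma)$ in the version away from $\Sigma$, which is the setting relevant to the paper.

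\emph{Step 3: the main point, $H^\ast_-(\widehat S)=0$.} The deck transformation $\tau$ is the rotation by $\pi$ in the fibres of the circle bundle $\widehat S\to\Sigma$. Rotations by any angle form a circle action by fibre rotations, and $\tau$ lies in this connected group, so $\tau$ is isotopic to the identity. Consequently $\tau^\ast$ acts as the identity on de Rham cohomology, and $H^\ast_-(\widehat S)=0$.

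Concretely, I would average $\eta$ over the $S^1$-action. Let $X$ be the generator of the action. Cartan's formula gives
\[
\rho_\theta^\ast\eta-\eta=\mathrm{d}\int_0^\theta \rho_{\theta'}^\ast(\iota_X\eta)\,d\theta' .
\]
Taking $\theta=\pi$, anti-invariance $\rho_\pi^\ast\eta=\tau^\ast\eta=-\eta$ yields
\[
-2\eta=\mathrm{d}\gamma, \qquad \gamma=\int_0^\pi \rho_{\theta'}^\ast(\iota_X\eta)\,d\theta',
\]
so $\eta=-\tfrac12\mathrm{d}\gamma$ is exact. This argument can be applied directly on $U\setminus\Sigma$ using the rotation action of $N^{1/2}$, since it commutes with $\tau$. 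It then needs no retraction at all, and Step 1 becomes unnecessary.

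\emph{Expected obstacle.} The main subtlety is that a global $S^1$-action by fibre rotations needs an $SO(2)$-structure on $N$ and a unitary identification, which the co-orientation provides. One must also check that $\tau$ is exactly the rotation by $\pi$ in the $z^{1/2}$ fibre coordinate, which follows from the definition of $\widehat M$ as the cover trivializing $\mathcal I$.

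If the lemma is meant to include forms defined across $\Sigma$ on the branched cover, I would note that $X$ vanishes on $\Sigma$ and extends smoothly, so the same formula applies. Finally, to obtain an anti-invariant primitive, I would antisymmetrize $\gamma$ as in Step 2.
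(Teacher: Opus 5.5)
Your proof is correct, but it takes a different route from the paper.

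\textbf{The paper's route.} The paper retracts $U$ onto the circle bundle $S\to\Sigma$ and uses the Gysin sequence. Since $\tau$ rotates the fibres, it preserves their orientation, so $\pi_*\circ\tau^*=\pi_*$ forces $\pi_*[\eta]=0$. Hence $[\eta]=\pi^*\alpha$. This class is $\tau$-invariant because $\pi\circ\tau=\pi$, so $[\eta]=-[\eta]=0$.

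\textbf{Your route.} You observe that $\tau$ is the half-period map of the fibre-rotation flow, hence isotopic to the identity. You make this explicit through the homotopy formula $\rho_\pi^*\eta-\eta=\mathrm{d}\int_0^\pi\rho_\theta^*(\iota_X\eta)\,d\theta$, applied directly on the punctured neighbourhood.

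The subtlety you flag can be settled without constructing $N^{1/2}$ at all. Lift the rotation vector field of $N$ through the covering $U\setminus\Sigma\to V\setminus\Sigma$. Its flow is complete, and its time-$2\pi$ map (in the angle of $z$) is a deck transformation. Because $\mathcal I$ has monodromy $-1$ around linking circles, this deck transformation is $\tau$.

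\textbf{What each route buys.} The paper's argument is purely cohomological. It uses only that $\tau$ covers the identity of $\Sigma$ and preserves fibre orientation. Your argument avoids the Gysin sequence and the retraction, and it produces an explicit primitive $\gamma$. That primitive is automatically anti-invariant: $\tau$ commutes with every $\rho_\theta$ and preserves $X$, so $\tau^*\gamma=-\gamma$, and your antisymmetrisation in Step~2 is not even needed.

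An anti-invariant primitive is exactly what the later applications require, e.g.\ the $\mathcal I$-valued $2$-form $h$ in Proposition~\ref{prop:metric-variation-leading-term}. The paper's cohomology-level argument supplies it only after an averaging step that it leaves implicit. In addition, since $\rho_\theta$ preserves $r$ and $|X|=O(r)$, your $\gamma$ comes with explicit pointwise control in terms of $\eta$ near $\Sigma$.
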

\begin{proof}
	Retracting $U$ to the unit circle bundle $\pi: S \to \Sigma$, we consider the Gysin exact sequence:
	\[
	\cdots \longrightarrow H^k(\Sigma) \xrightarrow{\pi^*} H^k(U) \xrightarrow{\pi_*} H^{k-1}(\Sigma) \longrightarrow \cdots,
	\]
	where $\pi_*$ denotes integration along the fibre. Let $\tau: U \to U$ be the non-trivial deck transformation. Since $\tau$ acts on the $S^1$-fibers by rotation by $\pi$, it preserves the orientation of the fibres. Consequently, the integration map satisfies $\pi_* \circ \tau^* = \pi_*$.
	
	Let $[\eta] \in H^k(U)$ be an anti-invariant class, satisfying $\tau^*[\eta] = -[\eta]$. Applying the integration map, we find:
	\[
	\pi_*([\eta]) = \pi_*(\tau^*[\eta]) = -\pi_*([\eta]).
	\]
	Thus, $\pi_*([\eta]) = 0$. By exactness, $[\eta] = \pi^*(\alpha)$ for some $\alpha \in H^k(\Sigma)$. Since the projection $\pi$ is $\tau$-invariant (i.e., $\pi \circ \tau = \pi$), the image of $\pi^*$ lies entirely in the invariant subspace:
	\[
	\tau^*[\eta] = \tau^*(\pi^*\alpha) = (\pi \circ \tau)^*\alpha = \pi^*\alpha = [\eta].
	\]
	Comparing this with the assumption $\tau^*[\eta] = -[\eta]$, we conclude that $[\eta] = 0$.
\end{proof}
This lemma shows that $\sigma^m_0$ is co-exact on a tubular neighbourhood of $\Sigma$. Therefore, on this tubular neighbourhood, there exists a family $\{\tau^m_0\}_{m \in \N} \in \Gamma(\Lambda^2 T^*M \otimes \mathcal{I})$, such that

$$
\widehat T^{\langle m\rangle}_0(\omega_0^\sharp,\cdot) = \d u_0^{(m)} + \d^* \!\tau^{ m }_0 + L^m_0.
$$

\subsection{Perturbation using locally harmonic function}
Before proving Proposition~\ref{prop:main-result-linear-theory}, we first consider a simpler situation, closely related to \cite{Parker2022concentrating}, in which the metric variation is constant near the singular set. More precisely, let \(U\) be a small
neighbourhood of \(\Sigma\) and assume that the metric deformation satisfies \(g_s=g_0\) on \(U\).
Assume in addition that \(|\omega_0|>0\) on \(U\setminus\Sigma\). Under these hypotheses, the
deformation problem reduces to a local analytic question whether there exist
\(\mathcal I\)-valued harmonic functions near \(\Sigma\) with prescribed leading asymptotics. Explicitly, 

\begin{proposition}
	\label{prop:metric-variation-leading-term}
	Pick \(\widetilde A\in \Gamma(N^{-1/2})\) and \(\widetilde B\in \Gamma(N^{-3/2})\).
	There exists a family \(g_s\) such that for \eqref{eq_u_s} the leading expansion satisfies
	\[
	u_0^{(1)} = \Re \left(\widetilde A \, z^{\frac{1}{2}} + \widetilde B \, z^{\frac{3}{2}}\right)
	- \frac{1}{2} \Re \left( \widetilde  A\, z^{\frac{1}{2}}\right) \Re \left( \mu \, z\right)
	+ o(r^2)
	\]
	if and only if there exist a neighbourhood \(U\) of \(\Sigma\) and a harmonic section \(f\in \Gamma(\mathcal I_U)\) with
	\[
	f = \Re \left(\widetilde A \, z^{\frac{1}{2}} + \widetilde B \, z^{\frac{3}{2}}\right)
	- \frac{1}{2} \Re \left( \widetilde  A\, z^{\frac{1}{2}}\right) \Re \left( \mu \, z\right)
	+ o(r^2).
	\]
\end{proposition}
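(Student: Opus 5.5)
We work in the setting of this subsection: the variation satisfies $g_s=g_0$ on a neighbourhood $U$ of $\Sigma$, and $|\omega_0|>0$ on $U\setminus\Sigma$. The argument is by the first variation formula \eqref{eq_firstequation}, and we treat the two implications separately.

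\emph{Necessity.} Assume a family $g_s$ with $g_s=g_0$ on $U$ produces the stated expansion of $u_0^{(1)}$. On $U$ we have $g_0^{(1)}=0$, so $\widehat T^{\langle 1\rangle}_0=0$ there. Hence \eqref{eq_firstequation} at $s=0$ reduces on $U$ to $\Delta_{g_0}u_0^{(1)}=0$. Thus $f:=u_0^{(1)}|_U$ is a harmonic section of $\mathcal I_U$ with exactly the required expansion, and this direction is immediate.

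\emph{Sufficiency.} Given a harmonic $f\in\Gamma(\mathcal I_U)$ with the prescribed expansion, we shrink to neighbourhoods $U''\Subset U'\Subset U$ of $\Sigma$. Fix a cutoff $\chi$ equal to $1$ on $U''$ and supported in $U'$, and set $v:=\chi f$, extended by zero to all of $M$. We want a metric variation with $g^{(1)}_0$ supported in $U'\setminus U''$ whose first-order response is $u_0^{(1)}=v$.

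Reading \eqref{eq_firstequation} backwards, it suffices to find $\widehat T:=\widehat T^{\langle 1\rangle}_0$, compactly supported in $U'\setminus\overline{U''}$, with
\[
\widehat T(\omega_0^\sharp,\cdot)=\d v+\beta,\qquad \d^*\beta=0 ,
\]
where $\beta$ is supported where $\nabla\chi\neq0$. Note that $\d^*\d v=\Delta v$ is already supported in $\operatorname{supp}\nabla\chi$, because $f$ is harmonic.

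On the annular region $W:=U'\setminus\overline{U''}$, where $|\omega_0|>0$, Lemma \ref{lem:canonical-choice-hat-T} lets us realise any $\alpha$ as $\widehat T(\omega_0^\sharp,\cdot)$. The tensor $\widehat T$ is then smooth and compactly supported in $W$ whenever $\alpha$ is, because $|\omega_0|^{-2}$ is bounded on the support. We then recover $g^{(1)}_0$ from \eqref{eq_relationshipgandT}, since $n\ge3$, and take $g_s=g_0+s\,g^{(1)}_0$ for small $s$. By uniqueness of the $L^2$-solution of \eqref{eq_u_s}, with the cohomology class fixed, the derivative $u_0^{(1)}$ equals $v$ up to an $L^2$-harmonic correction; on the double cover this correction is zero because $v$ is anti-invariant and the class is unchanged.

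\emph{Main obstacle.} The real step is to make $\alpha:=\d v+\beta$ compactly supported in $W$ while keeping $\d^*\alpha=0$. The term $\d v$ itself does not vanish on $U''$; there it equals $\d f$.

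\emph{Approach to the obstacle.} We modify the construction so that $u_0^{(1)}$ is not literally $\chi f$, but a global anti-invariant function $\tilde u$ on $\widehat M$ that equals $f$ near $\Sigma$ and is harmonic except on $W$. Then $\alpha:=\d\tilde u$ has $\d^*\alpha$ supported in $W$. The remaining freedom is to subtract a co-closed form $\sigma$, and the aim is an $\alpha$ supported in $W$ whose divergence equals $\Delta\tilde u$. This is a divergence equation on the region $W$: solve $\d^*\alpha=h$ with compactly supported $\alpha$. It is solvable, Bogovskii-style, exactly when $h$ pairs to zero with anti-invariant constants on the components of $W$. Those pairings vanish because $\mathcal I$ has nontrivial monodromy, so there are no parallel sections on each annular component.

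We then need that the solution $u_0^{(1)}$ of \eqref{eq_firstequation} is $\tilde u$. Since $\d^*(\d\tilde u-\alpha)=0$, uniqueness gives $u_0^{(1)}=\tilde u$ modulo constants, which vanish for $\mathcal I$-valued sections. The point to check carefully is that the divergence equation admits a compactly supported solution in the twisted setting; by Lemma \ref{lem:Z2-equivariant-closed-forms-are-exact-near-Sigma} the relevant cohomological obstruction vanishes.
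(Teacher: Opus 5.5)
Your proof is correct. The necessity direction is the same as the paper's, but for sufficiency you handle the key step by a different route.

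The paper applies Lemma~\ref{lem:Z2-equivariant-closed-forms-are-exact-near-Sigma} to the closed $(n-1)$-form $*\d f$ to get $\d f=-\d^*h$ on all of $U$. It then writes down $\alpha=\d(\chi f)+\d^*(\chi h)$ explicitly. This form vanishes where $\chi\equiv 1$ and where $\chi\equiv 0$, and $\d^*\alpha=\Delta(\chi f)$ because $\d^*\d^*=0$.

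You instead solve $\d^*\alpha=\Delta(\chi f)$ directly, with $\alpha$ compactly supported in the annulus $W$. This is a compactly supported de Rham problem with local coefficients, and its obstruction group is $H^n_c(W;\mathcal I)$. Since $W$ is a collar on the unit normal circle bundle, this group is the degree-$(n-1)$ anti-invariant cohomology of that circle bundle on the double cover. That is exactly what Lemma~\ref{lem:Z2-equivariant-closed-forms-are-exact-near-Sigma} kills, and dually it is the absence of parallel sections of $\mathcal I$ that you point to.

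So both routes rest on the same topological vanishing, packaged differently:
\begin{itemize}
\item Your version is a divergence equation on the annulus. It would also handle the inductive step of Proposition~\ref{prop:higher-order-variation}, where $L^m_0$ is likewise compactly supported away from $\Sigma$.
\item The paper's version gives an explicit primitive. Its homotopy-formula variant, Lemma~\ref{lem:Z2-equivariant-Schwartz-forms-are-approximately-exact}, carries over to Proposition~\ref{prop:main-result-linear-theory}. There $\Delta f_m$ is only Schwartz near $\Sigma$, so a compactly supported divergence problem on an annulus is no longer the right formulation.
\end{itemize}

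Two simplifications are available. First, $\tilde u=\chi f$ already has every property you ask of $\tilde u$, so no modification is needed. Second, you need neither $n\ge 3$ nor \eqref{eq_relationshipgandT}, since Lemma~\ref{lem:canonical-choice-hat-T} produces $T$ itself and you may take $g^{(1)}_0:=T$.

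Your initial claim that $\beta$ is supported where $\nabla\chi\neq 0$ is inconsistent with $\beta=-\d f$ on $U''$. Your subsequent argument corrects this.
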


\begin{proof}
	\noindent\emph{Only if:}
	Since \(g_s\) is supported away from \(\Sigma\), differentiating \eqref{eq_u_s} at \(s=0\) gives
	\(\Delta_{g_0}\dot u_0=0\) on a neighbourhood of \(\Sigma\).
	Thus, \(\dot u_0\) is locally harmonic near \(\Sigma\), and its leading expansion is of the required form.
	
	\smallskip
	\noindent\emph{If:}
	Conversely, assume we are given such a harmonic \(f\) on \(U\).
	Since \(\d\:(*\d f)=0\), the \((n-1)\)-form \(*\d f\) is closed.
	By Lemma~\ref{lem:Z2-equivariant-closed-forms-are-exact-near-Sigma}, there exists \(h\in \Omega^{2}(\mathcal I_U)\) such that
	\[
	\d f + \d^*h = 0.
	\]
	Let \(\chi\) be a cutoff function supported in \(U\) with \(\chi\equiv 1\) near \(\Sigma\).
	Let $g_s = g_0 + s \: T^{\langle 1 \rangle}_0$ with 
	\begin{equation}\label{eq_hatT_0}
		T^{\langle 1 \rangle}_0
		:= \frac{2\,\omega_0\cdot\bigl(\d(\chi f)+\d^*(\chi h)\bigr)}{|\omega_0|^2}.
	\end{equation}
	By construction, $T^{\langle 1 \rangle}_0$ is supported in an annular region (where \(\d\chi\neq 0\)) and so it extends smoothly over $M$.
	Moreover, Lemma~\ref{lem:canonical-choice-hat-T} applied on \eqref{eq_hatT_0} yields
	\[
	\d^*\widehat T_0^{\langle 1\rangle}(\omega_0^\sharp,\ldots)=\Delta_{g_0}(\chi f).
	\]
	Then Proposition~\ref{prop:linearized-equation} gives
	\[
	\Delta_{g_0}(\dot u_0)=\d^*\widehat T_0^{\langle 1\rangle}(\omega_0^\sharp,\ldots)
	=\Delta_{g_0}(\chi f).
	\]
	Hence, \(\Delta_{g_0}(\dot u_0-\chi f)=0\) on \(M\).
	By \cite[Section~2]{Donaldson2021}, any bounded \(\mathcal I\)-valued harmonic section on \(M\) must vanish, and therefore \(\dot u_0=\chi f\).
	Therefore, they have the same asymptotic expansion.
\end{proof}

One can also prescribe higher-order derivatives of the leading coefficients, provided one has enough locally harmonic sections. For this we use that the error $L^m_s$ is compactly supported away from $\sigma$ whenever $g_s = g_0$ near $\Sigma$.

\begin{proposition}
	\label{prop:higher-order-variation}
	Fix $m \in \N$ and pick \(\widetilde A^1, \ldots \widetilde A^m \in \Gamma(N^{-1/2})\) and \(\widetilde B^1, \ldots \widetilde B^m\in \Gamma(N^{-3/2})\). There exists a family \(g_s\) such that for \eqref{eq_u_s} the leading expansion satisfies
	\[
	u_0^{(i)} = \Re \left(\widetilde{A}^i \, z^{\frac{1}{2}} + \widetilde{B}^i \, z^{\frac{3}{2}}\right)
	- \frac{1}{2} \Re \left( \widetilde{A}^i\, z^{\frac{1}{2}}\right) \Re \left( \mu \, z\right)
	+ o(r^2)
	\]
	for all $1 \le i \le m$
	if and only if there exist a neighbourhood \(U\) of \(\Sigma\) and a family of harmonic sections \(f^i\in \Gamma(\mathcal I_U)\) with
	\[
	f^i = \Re \left(\widetilde A^i \, z^{\frac{1}{2}} + \widetilde B^i \, z^{\frac{3}{2}}\right)
	- \frac{1}{2} \Re \left( \widetilde  A^i\, z^{\frac{1}{2}}\right) \Re \left( \mu \, z\right)
	+ o(r^2).
	\]
\end{proposition}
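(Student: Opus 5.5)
The plan is to run an induction on $i$, following the $m=1$ case of Proposition~\ref{prop:metric-variation-leading-term}. The key point is that when every variation $g_s^{(j)}$ is supported in an annulus $\{\d\chi\neq 0\}$ away from $\Sigma$, the lower-order error $L^i_0$ of Proposition~\ref{prop:linearized-equation} is also supported there. This is because every term of $L^i_0$ contains a factor $g_0^{(j)}$ with $1\le j\le i-1$. Near $\Sigma$, each derivative $u_0^{(i)}$ is therefore $\Delta_{g_0}$-harmonic.

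\emph{Only if.} Assume $g_s=g_0$ near $\Sigma$, as in the setup of this subsection. Differentiate \eqref{eq_u_s} $i$ times at $s=0$ and use \eqref{eq_higher_order_equations}. Since $\widehat T^{\langle i\rangle}_0$ and $L^i_0$ vanish on a neighbourhood $U$ of $\Sigma$, we get $\Delta_{g_0}u_0^{(i)}=0$ on $U$. So $f^i:=u_0^{(i)}|_U$ is a harmonic section of $\mathcal I_U$ with the prescribed expansion.

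\emph{If.} Take the metric ansatz $g_s=g_0+\sum_{i=1}^m \frac{s^i}{i!}\,T^i$, so that $g^{(i)}_0=T^i$ for $i\le m$. Each $T^i$ will be supported where $\d\chi\neq0$, and we choose the $T^i$ inductively. Suppose $T^1,\dots,T^{i-1}$ are already chosen with $u_0^{(j)}=\chi f^j$ for $j<i$. Then $L^i_0$ is determined, smooth, and supported in the annulus $\{\d\chi\neq 0\}$, which lies in $U\setminus\Sigma$ where $\omega_0\neq0$ by assumption. Now do the following.
\begin{enumerate}
\item By Lemma~\ref{lem:Z2-equivariant-closed-forms-are-exact-near-Sigma}, write $\d f^i+\d^*h^i=0$ on $U$, exactly as in the proof of Proposition~\ref{prop:metric-variation-leading-term}.
\item Set
\[
\alpha^i:=\d(\chi f^i)+\d^*(\chi h^i)-L^i_0 .
\]
This form is supported in the annulus: near $\Sigma$ the first two terms cancel because $\chi\equiv1$ there, and $L^i_0$ vanishes there.
\item Lemma~\ref{lem:canonical-choice-hat-T} applied to $\alpha^i$ gives a smooth $\widehat T^{\langle i\rangle}_0$ with $\widehat T^{\langle i\rangle}_0(\omega_0^\sharp,\cdot)=\alpha^i$, supported in the annulus. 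We recover $T^i=g^{(i)}_0$ from it via \eqref{eq_relationshipgandT}, assuming $n\neq2$. In the case $n=2$ one uses $T$ directly, or notes that the trace part is irrelevant to \eqref{eq_higher_order_equations}.
\item By \eqref{eq_higher_order_equations},
\[
\Delta_{g_0}u_0^{(i)}=\d^*\bigl(\alpha^i+L^i_0\bigr)=\d^*\d(\chi f^i)+\d^*\d^*(\chi h^i)=\Delta_{g_0}(\chi f^i),
\]
using $\d^*\d^*=0$.
\item The difference $u_0^{(i)}-\chi f^i$ is a bounded $\mathcal I$-valued harmonic section on $M$, hence zero by \cite[Section~2]{Donaldson2021}. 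So $u_0^{(i)}=\chi f^i$, which has the desired expansion.
\end{enumerate}

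The main obstacle is checking rigorously that $L^i_0$ is supported away from $\Sigma$ and that the induction closes. Each term of $L^i_0$ involves $g^{(j)}_0$ together with the derivatives $\nabla u_0^{(l)}$ and $\omega_0^\sharp$, and these last factors are singular near $\Sigma$. One therefore has to use that the $g^{(j)}_0$ factor vanishes identically there, not merely that it is small. One also has to verify that $u_0^{(l)}$ for $l<i$ is smooth on the annulus, which holds because there $u_0^{(l)}=\chi f^l$ with $f^l$ smooth away from $\Sigma$. A secondary check is that the $s$-derivatives of $u_s$ exist and satisfy \eqref{eq_u_s} differentiated at $s=0$. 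This follows from smooth dependence of the solution of \eqref{eq_u_s} on $g_s$, since the metric is unchanged near $\Sigma$.
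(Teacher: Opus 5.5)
Your proposal is correct and follows essentially the same route as the paper. Both arguments use induction on $i$ with the ansatz $g_s=g_0+\sum_{i=1}^m \frac{s^i}{i!}T^i$, apply the canonical tensor of Lemma~\ref{lem:canonical-choice-hat-T} to $\d(\chi f^i)+\d^*(\chi h^i)-L^i_0$ (which is supported where $\d\chi\neq0$), and conclude $u_0^{(i)}=\chi f^i$ from uniqueness of bounded $\mathcal I$-valued harmonic sections. Your detour through \eqref{eq_relationshipgandT} is unnecessary, since the lemma already gives you $T$ itself to take as $g_0^{(i)}$ in every dimension. Your explicit ``only if'' argument fills in a step the paper leaves to the $m=1$ case.
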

\begin{proof}
	We construct tensors \(T_i\) so that $g_s = g_0+\sum_{i=1}^m \frac{s^i}{i!}\,T_i$, and we proceed by induction.
	
	The case \(i=1\) is given by Proposition~\ref{prop:metric-variation-leading-term}.
	Assume \(T_i\) has been constructed for all \(i\le m-1\).
	Then in \eqref{eq_higherorderequation} the term \(L_0^m\) is well-defined, supported away from $\Sigma$ and smooth.
	
	Define
	\begin{equation}
		\label{eq_explicit_formula_Tm}
		\begin{split}
				T^{\langle m\rangle}_0
			:= \frac{2\,\omega_0\cdot\bigl(\d(\chi f_m)+\d^*(\chi h_m)-L_0^m\bigr)}{|\omega_0|^2}.
		\end{split}
	\end{equation}
	By construction this is a smooth section of \(\Sym^2(T^*M)\).
	Moreover, Lemma~\ref{lem:canonical-choice-hat-T} implies
	\[
	\widehat T^{\langle m\rangle}_0(\omega_0^\sharp,\ldots)
	= \d(\chi f)+\d^*(\chi h)-L_0^m.
	\]
	Therefore,
	\[
	\Delta_{g_0}\left(u^{(m)}_0\right)
	= \d^*\!\left(\widehat T^{\langle m\rangle}_0(\omega_0^\sharp,\ldots)+L_0^m\right)
	= \Delta_{g_0}(\chi f).
	\]
	Since \(f\) is bounded, the same argument as above implies \(u^{(m)}_0=\chi f\), which gives the desired identities for \(\partial_s^m A_s\) and \(\partial_s^m B_s\).
\end{proof}

\subsection{Schwartz approximation and deformations}
Given \(B\in \Gamma(N^{-3/2})\), one should not expect \(B\) to arise as the leading coefficient of
an \emph{exact} locally harmonic section near \(\Sigma\) for an arbitrary metric \(g\); this is a
genuinely metric-dependent constraint. For instance, when \(g\) is real-analytic, one expects
(though we are not aware of a proof) that locally harmonic sections have real-analytic leading
coefficients along \(\Sigma\). In particular, a non-real-analytic section \(B\) should not occur as
the leading term of an exact harmonic expansion. For this reason, the natural goal is to solve the
harmonic equation \emph{up to infinite order} near \(\Sigma\): we prescribe the leading behaviour while
allowing an error that is Schwartz along \(\Sigma\).

In this subsection, working in a neighbourhood \(U\) of \(\Sigma\subset M\), we construct a
\emph{Schwartz approximation} with prescribed leading expansion which solves the harmonic equation
to infinite order near \(\Sigma\). The construction is standard in the edge calculus and relies on
a Borel-type extension; see \cite{MazzeoHaydysTakahashi,Mazzeo1991}. Moreover, this Schwartz approximation will serve as an input for the metric deformations constructed later.

\begin{proposition}
	\label{prop_existence_schwartz_approximation}
	Given \(\widetilde B\in \Gamma(N^{-3/2})\), there exists a polyhomogeneous section
	\(u\in \Gamma(\mathcal I|_{U\setminus\Sigma})\) such that \(u= \widetilde B\, z^{3/2}+ o(r^2)\) and
	\(\Delta u\in \mathcal S\).
\end{proposition}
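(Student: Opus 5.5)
We construct $u$ by the standard order-by-order (Borel) procedure in the edge calculus. In polar normal coordinates $(r,\phi,t)$ on $U$, write the Laplacian of $g$ as
\[
\Delta = -\bigl(\partial_r^2 + r^{-1}\partial_r + r^{-2}\partial_\phi^2\bigr) + \Delta_\Sigma + \sum_{j\ge 1} r^{j-2} P_j ,
\]
where the normal operator $I(\Delta)=-(\partial_r^2+r^{-1}\partial_r+r^{-2}\partial_\phi^2)$ is the flat model in the normal plane. Each $P_j$ is a second-order operator in $(r\partial_r,\partial_\phi, r\partial_t)$ whose coefficients are smooth in $(r,\phi,t)$. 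The corrections come from the Taylor expansion of $g$ in normal coordinates, the mean curvature giving the $j=1$ term. We take as leading term
\[
u_0:=\Re\bigl(\widetilde B\,z^{3/2}\bigr),
\]
which is $\mathcal I$-valued because it involves half-integer powers of $z$, and satisfies $I(\Delta)u_0=0$. Consequently $\Delta u_0$ is a finite sum of terms $r^{p-\frac12}(\log r)^q a_{pq}(t,\phi)$ with $p\ge 1$. Each $a_{pq}$ has only half-integer Fourier modes $e^{i(l+\frac12)\phi}$, and $a_{pq}$ is a section of the appropriate power of $N$.

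The inductive step kills the error one order at a time. Suppose we have a polyhomogeneous $u_N$ with
\[
u_N - u_0 = o(r^2), \qquad \Delta u_N = O\bigl(r^{N-\frac12}|\log r|^{q_N}\bigr),
\]
with polyhomogeneous leading term $\sum_q r^{N-\frac12}(\log r)^q a_q(t,\phi)$. We solve the indicial equation
\[
I(\Delta)\Bigl(\sum_q r^{N+\frac32}(\log r)^q b_q\Bigr) = -\sum_q r^{N-\frac12}(\log r)^q a_q
\]
mode by mode in $\phi$. On the mode $e^{i(l+\frac12)\phi}$, $I(\Delta)$ acts on $r^{\gamma}(\log r)^q$ through the indicial polynomial $(l+\frac12)^2-\gamma^2$ with $\gamma=N+\frac32$. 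When $\gamma\neq \pm(l+\frac12)$ we divide by this polynomial, working downward in $q$. At the resonant modes $|l+\frac12|=N+\frac32$ we gain one extra power of $\log r$. Since there are finitely many modes in each $\phi$-expansion at a given order, and everything depends smoothly on $t$, the $b_q$ are smooth sections. Setting $u_{N+1}=u_N+\sum_q r^{N+\frac32}(\log r)^q b_q$, the new error is $O(r^{N+\frac12}|\log r|^{q_{N+1}})$, because the remaining parts of $\Delta$ applied to the correction gain at least one power of $r$. All corrections are $O(r^{5/2}|\log r|^{q})=o(r^2)$, so the prescribed leading term is never disturbed.

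Finally we apply Borel summation. We choose a sequence $\epsilon_N\to0$ fast enough and set
\[
u = u_0 + \sum_{N} \chi(r/\epsilon_N)\, v_N ,
\]
where $v_N$ is the $N$-th correction and $\chi$ is a cutoff equal to $1$ near $0$. The sequence $\epsilon_N$ is chosen so that $\sum_N \chi(r/\epsilon_N)\,v_N$ converges in every $C^k$ with the weights $r^{-p}$. Then $u$ is polyhomogeneous with the full formal expansion, and $\Delta u$ vanishes to infinite order together with all derivatives, so $\Delta u\in\MS$.

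The main obstacle is the bookkeeping near resonances and the convergence estimates in the Borel step. We must check that the coefficients of the edge operator in the $(r\partial_r, r\partial_t)$ calculus are smooth, so that derivatives of $\chi(r/\epsilon_N)v_N$ are controlled uniformly. For this we would cite the edge calculus structure from \cite{Mazzeo1991,MazzeoHaydysTakahashi} rather than redo it.
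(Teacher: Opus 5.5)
Your proposal is correct and follows essentially the same route as the paper: you build a formal polyhomogeneous solution order by order by inverting the indicial operator $(r\partial_r)^2+\partial_\phi^2$ mode by mode, with an extra $\log r$ at the resonant modes, and then take a Borel sum $\sum_N\chi(r/\epsilon_N)v_N$. Choosing $\epsilon_N$ according to the sizes of the $v_N$ is slightly more careful than the paper's fixed $\varepsilon_i=e^{-i^2}$; the one small slip is calling $\Delta u_0$ a finite sum, since it is really an asymptotic expansion, and this does not affect the argument.
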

\begin{proof}
	The construction is standard in the edge calculus: we first build a formal polyhomogeneous approximation and then use the Borel lemma to obtain an actual polyhomogeneous section with the desired asymptotics.
	
	In local Fermi coordinates \((r,\theta,t)\) near \(\Sigma\), we write the Laplacian as \(\Delta=\Delta_0+E\), where
	\(\Delta_0=\partial_r^2+\partial_\theta^2+r^2\Delta_\Sigma\), and \(E\) is an error term which can be expressed schematically as a linear combination of
	\(\mathcal O(r^{-1})\{(r\partial)^2,\partial_\theta^2,r^2\partial_t^2\}\).

	Let \(L_0=(r\partial_r)^2+\partial_\theta^2\).
	For \(h=r^{k_1}e^{ik_2\theta}\), the equation \(L_0 f=h\) is solvable after allowing suitable \(\log r\) terms:
	if \(k_1\neq |k_2|\), one may take \(f=\frac{1}{k_1^2-k_2^2}h\), while if \(k_1=|k_2|\), one may take \(f=\frac{1}{2k_1}(\log r)\,h\).
	More generally, if \(h=P_{k_0}(\log r)\,r^{k_1}e^{ik_2\theta}\) with \(P_{k_0}\) a polynomial of degree \(k_0\), then one can find a polynomial \(Q\) of degree \(\le k_0\) (and of degree \(\le k_0+1\) in the resonant case \(k_1=|k_2|\)) such that
	\(f=Q(\log r)\,r^{k_1}e^{ik_2\theta}\) satisfies \(L_0 f=h\).
	Since solutions are not unique, we fix one choice and denote it by \(L_0^{-1}h\).
	
	We write \(r^2\Delta=L_0+R\), where \(R=r^2\Delta_\Sigma+r^2E\); note that \(R\) contributes higher order terms in \(r\) compared with \(L_0\).
	Set \(u_0:=B z^{3/2}\).
	Then \(L_0u_0=0\) and \(R(u_0)\sim r^{5/2}h+\mathcal O(r^{7/2})\) for some \(h\).
	Define \(u_1:=L_0^{-1}(r^{5/2}h)\); then \(\Delta(u_0+u_1)=\mathcal O(r^{7/2})\).
	Iterating this procedure, we obtain a formal polyhomogeneous series \(\hat u\sim u_0+u_1+\cdots\) with
	\(u_i\sim r^{3/2+i}\) (possibly with \(\log r\) terms), such that \(\Delta \hat u\) vanishes to infinite order as \(r\to 0\).
	
	Finally, we take a Borel sum to obtain the desired \(u\).
	Let \(\chi\) be a cutoff function supported in \(U\) with \(\chi\equiv 1\) near \(\Sigma\), and set \(\varepsilon_i:=e^{-i^2}\).
	Define $u:=\sum_{i=0}^{\infty}\chi\!\left(\frac{r}{\varepsilon_i}\right)u_i,$ then \(u\) is polyhomogeneous and satisfies \(\Delta u\in \mathcal S\), as claimed.
\end{proof}
In this rest of this subsection, we will show Proposition \ref{prop:main-result-linear-theory}  by repeating the argument in Proposition \ref{prop:metric-variation-leading-term} and Proposition \ref{prop:higher-order-variation} up to Schwartz error. As these propositions heavily depend on Lemma \ref{lem:Z2-equivariant-closed-forms-are-exact-near-Sigma}, we first need to generalize Lemma \ref{lem:Z2-equivariant-closed-forms-are-exact-near-Sigma} so that it allows a Schwartz error.

\begin{lemma}
	\label{lem:Z2-equivariant-Schwartz-forms-are-approximately-exact}
For $\eta\in \Omega^k(\MI|_U)$, if $\eta|_{\Sigma}=0$ and $d\eta\in \MS$, then for $\zeta := \int_{s \in (0,r)} \iota_{\del_r } \eta\in \Omega^{k-1}(\MI|_U)$, we have $\eta-d\zeta \in\MS$.	
\end{lemma}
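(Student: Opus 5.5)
The plan is to use the standard radial homotopy formula (Poincaré lemma along the normal rays) and to check that every error term it produces is Schwartz. Work in Fermi coordinates $(r,\theta,t)$ on $U$, or better on the double cover $\widehat U$ where $\MI$ becomes trivial and $\eta$ is an ordinary anti-invariant form. Let $\Phi_\lambda(r,\theta,t)=(\lambda r,\theta,t)$ be the radial contraction for $\lambda\in[0,1]$, with $\Phi_1=\mathrm{id}$ and $\Phi_0$ the projection to $\Sigma$. The vector field generating $\Phi_\lambda$ is $\lambda^{-1}r\partial_r$ evaluated at $\Phi_\lambda$. Cartan's formula then gives the homotopy identity
\[
\eta-\Phi_0^*\eta = \d K\eta + K\d\eta,\qquad K\eta=\int_0^1 \lambda^{-1}\Phi_\lambda^*\bigl(\iota_{r\partial_r}\eta\bigr)\,\d\lambda .
\]
Substituting $s=\lambda r$ shows that $K\eta$ is exactly $\zeta=\int_{0}^{r}\iota_{\partial_r}\eta\,\d s$, where the integral is taken along the ray $s\mapsto(s,\theta,t)$ with $\iota_{\partial_r}\eta$ evaluated there. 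So the whole argument amounts to checking (a) that $\Phi_0^*\eta$ vanishes and (b) that $K$ maps $\MS$ to $\MS$.

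For (a), $\Phi_0^*\eta$ only involves $\eta|_\Sigma$ in the directions tangent to $\Sigma$, and this vanishes by the hypothesis $\eta|_\Sigma=0$. The anti-invariance gives this independently: the restriction to $\Sigma$ of an $\MI$-valued form is odd under the deck transformation, which fixes $\Sigma$, so it must vanish. Hence $\eta-\d\zeta=K(\d\eta)$.

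For (b), let $\beta=\d\eta\in\MS$ and write $K\beta(r,\theta,t)=\int_0^r(\iota_{\partial_r}\beta)(s,\theta,t)\,\d s$. Since $|\beta|\le C_p s^p$ for every $p$, we get $|K\beta|\le C_p r^{p+1}$, which gives the zeroth-order Schwartz bounds. For derivatives, $\partial_\theta$ and $\partial_t$ pass under the integral and only hit $\beta$ and the fixed coordinate coefficients, so they obey the same bounds. The derivative $\partial_r$ returns the integrand at $s=r$, which is again Schwartz. Iterating gives $\sup r^{-p}|\nabla^qK\beta|<\infty$ for all $p,q$.

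The main obstacle is converting between coordinate derivatives $\partial_r,\partial_\theta,\partial_t$ and the covariant $\nabla$ in the Schwartz definition. The coordinate frame $\d\theta$ and $\partial_\theta$ has size $r^{\mp1}$, so passing between frames costs at most finitely many powers of $r^{-1}$ at each order $q$. I will absorb this loss using the arbitrary decay $r^p$: I will verify that the bounds $|\nabla^q\beta|\le C r^p$ imply bounds on the components of $\beta$ in the frame $\d r,\d\theta,\d t$ and on their coordinate derivatives, and conversely, each up to a factor $r^{-N(q)}$. Since $p$ is arbitrary, these losses are harmless. It then remains only to note that smoothness of $\zeta$ away from $\Sigma$ is clear.
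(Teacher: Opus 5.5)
Your proposal is correct and follows the same route as the paper, which integrates Cartan's formula $\mathcal L_{\partial_r}\eta=\d\iota_{\partial_r}\eta+\iota_{\partial_r}\d\eta$ along the normal rays to get $\eta-\d\zeta=\int_0^r\iota_{\partial_s}\d\eta\,ds$; this is your identity $\eta-\Phi_0^*\eta=\d K\eta+K\d\eta$ after the substitution $s=\lambda r$. Your step (b), which checks that $K$ preserves $\MS$ up to polynomial losses in $r^{-1}$ from the frame conversions, supplies a verification the paper only asserts.
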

\begin{proof}
	By the Cartan formula $\mathcal L_{\partial_r}\eta=d(\iota_{\partial_r}\eta)+\iota_{\partial_r}d\eta$, hence
	$$\eta(r)-\eta(0)=\int_0^r\mathcal L_{\partial_s}\eta(s)\,ds
	=d\zeta+\int_0^r \iota_{\partial_s}d\eta(s)\,ds.$$
	Since $\eta|_{\Sigma}=0$ we have $\eta(0)=0$, so we conclude
	$\eta-d\zeta=\int_0^r \iota_{\partial_s}d\eta(s)\,ds\in\MS$.
\end{proof}
\begin{proof}[Proof of Proposition \ref{prop:main-result-linear-theory}]
	The argument is identical to that of Proposition \ref{prop:metric-variation-leading-term} and
	Proposition \ref{prop:higher-order-variation}, with locally harmonic functions replaced by their
	Schwartz approximations. 
	
	For $m\in\{1,\ldots,k\}$, let $T^{\langle m\rangle}\in \Gamma(\Sym^2(T^*M))$ and set $g_s:=g_0+\sum_{m=1}^k \frac{s^m}{m!}\,T^{\langle m\rangle}.$ We determine $T^{\langle m\rangle}$ inductively. Given $B_m$, Proposition \ref{prop_existence_schwartz_approximation} provides a polyhomogeneous
	function $f_m$ on $U$ such that $\Delta f_m\in \MS$ and $f_m\sim \Re(\widetilde B_m \,z^{\frac32})+\cdots .$
	
	By Lemma \ref{lem:Z2-equivariant-Schwartz-forms-are-approximately-exact}, there exists a
	polyhomogeneous $2$-form $h_m\in \Omega^2(\MI|_U)$ such that $df_m+d^*h_m\in \MS$.
	Define $T^{\langle m\rangle}$ by \eqref{eq_explicit_formula_Tm}. By construction,
	$|\omega_0|^2\,T^{\langle m\rangle}$ is Schwartz; hence $T^{\langle m\rangle}$ is Schwartz as well and
	therefore extends smoothly across $\Sigma$ to a smooth tensor on $M$.
	
	Repeating the argument of Proposition \ref{prop:higher-order-variation}, we conclude $\left.\partial_s^m A_s\right|_{s=0}=0,\;
	\left.\partial_s^m B_s\right|_{s=0}=\widetilde{B}_m,$ as desired.
\end{proof}

\subsection{Deformations \texorpdfstring{$\vec{k}$}
{k}-nondegenerate \texorpdfstring{$\ZT$}{Z/2} harmonic 1-forms}
In this subsection we explain why the linear deformation result of
Proposition~\ref{prop:main-result-linear-theory} applies to any $\vec{k}$-nondegenerate
$\ZT$ harmonic $1$-form.  The only input is to verify the two technical assumptions in that proposition.

\begin{lemma}\label{lem:k-nondegenerate_satisfies_tech_assumptions}
	Let $(\Sigma,\mathcal I,\omega_0)$ be a $\vec{k}$-nondegenerate $\ZT$-harmonic $1$-form
	in the sense of Definition~\ref{def_k_nondegenerate}.
	Then the two technical assumptions in Proposition~\ref{prop:main-result-linear-theory} hold:
	\begin{enumerate}
		\item $|\omega_0|>0$ on $U\setminus \Sigma$ for some neighbourhood $U$ of $\Sigma$.
		\item For any function $f\in \Msch$, we have $f/|\omega_0|\in \Msch$.
	\end{enumerate}
\end{lemma}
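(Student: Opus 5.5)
The plan is to reduce both assertions to a two-sided pointwise bound for \(|\omega_0|\) near \(\Sigma\), together with derivative bounds for \(\log|\omega_0|\), and then to check that dividing by \(|\omega_0|\) costs only a finite power of \(r\) and a polyhomogeneous factor. Since \(\Sigma\) is compact with finitely many components, we may work on a single component \(\Sigma_i\) with \(k=k_i\).

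First, in local coordinates write \(\omega_0=\d\Re(f_k(t)z^{k+\frac12})+\rho\), with \(\rho=o(r^k)\). Because \(\omega_0\) is polyhomogeneous, \(\rho\) in fact has the form \(O(r^{k+\frac12}\log^{q} r)\), and this bound persists after applying the vector fields \(r\partial_r,\partial_\phi,r\partial_t\). The leading term is
\[
\d\Re(f_k z^{k+\frac12})=\Re\bigl((k+\tfrac12)f_k z^{k-\frac12}\,\d z\bigr)+\Re\bigl(z^{k+\frac12}\d f_k\bigr).
\]
The first summand has norm exactly \((k+\frac12)|f_k|r^{k-\frac12}\), since \(|\Re(a\,\d z)|=|a|\) for the Euclidean model metric, and the comparison with \(g_0\) only introduces a factor \(1+O(r)\). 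The second summand is \(O(r^{k+\frac12})\). Since \(f_k\) is nowhere vanishing on the compact component, \(|f_k|\ge c>0\). Hence \(|\omega_0|\ge c' r^{k-\frac12}\) on \(\{0<r<r_0\}\) for \(r_0\) small, and in particular \(|\omega_0|>0\) there. This proves (1) with \(U=\{r<r_0\}\).

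For (2), let \(f\in\MS\); we need \(\sup r^{-p}|\nabla^q(f/|\omega_0|)|<\infty\) for all \(p,q\). By Leibniz it suffices to bound \(|\nabla^j(|\omega_0|^{-1})|\le C_j r^{-(k-\frac12)-j}\). Write \(|\omega_0|^{-1}=(|\omega_0|^2)^{-1/2}\). The function \(|\omega_0|^2\) is smooth on \(U\setminus\Sigma\) and polyhomogeneous with \(|\nabla^j|\omega_0|^2|\le C r^{2k-1-j}\); this follows from the expansion, since each \(\nabla\) costs at most \(r^{-1}\) and log factors are absorbed by taking a slightly worse power. By the Faà di Bruno formula, \(\nabla^j(|\omega_0|^2)^{-1/2}\) is a sum of products \((|\omega_0|^2)^{-1/2-l}\prod_{a}\nabla^{j_a}|\omega_0|^2\) with \(\sum j_a=j\) and \(l\) factors. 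Each such product is bounded by
\[
r^{-(2k-1)(\frac12+l)}\,r^{(2k-1)l-j}=r^{-(k-\frac12)-j},
\]
using the lower bound from the first step. Therefore
\[
r^{-p}|\nabla^q(f/|\omega_0|)|\le C\sum_{j\le q} r^{-p-(k-\frac12)-j}|\nabla^{q-j}f|,
\]
which is finite because \(f\) is Schwartz: apply the Schwartz bound with exponent \(p+k+q\). Smoothness of \(f/|\omega_0|\) on \(U\setminus\Sigma\) is clear from (1).

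The main obstacle is the derivative bound on \(|\omega_0|^2\), namely the claim that the remainder \(\rho\) and all its derivatives obey the expected scaling \(|\nabla^j\rho|\lesssim r^{k-j}\). The stated expansion only gives \(C^0\) control of \(o(r^k)\). I would use the full polyhomogeneous expansion of Section \ref{sec:background:local-expansions}, which holds in the \(C^\infty\) edge sense: derivatives by \(r\partial_r,\partial_\phi,r\partial_t\) preserve the order, as is standard from \cite{Mazzeo1991}. Converting these edge derivatives to \(\nabla\) costs one factor of \(r^{-1}\) per derivative. Since the Schwartz condition absorbs any polynomial loss, even a crude bound of the form \(|\nabla^j|\omega_0|^2|\le Cr^{-N_j}\) would suffice for (2).
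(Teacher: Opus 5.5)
Your proof is correct and follows the paper's argument. For (1), the nowhere-vanishing leading coefficient gives the lower bound $|\omega_0|\ge c\,r^{k-1/2}$. For (2), the Leibniz rule combined with at-most-polynomial growth of the derivatives of $|\omega_0|^{-1}$ (which the paper attributes to polyhomogeneity) gives the Schwartz property; your Fa\`a di Bruno computation and your explicit appeal to edge regularity of the expansion spell out what the paper compresses into one line.
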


\begin{proof}
	(1) By $\vec{k}$-nondegeneracy, in a tubular neighbourhood $U$ of $\Sigma$ we can write
	$\omega_0=du$ with
	\[
	u=\Re\!\big(B_k z^{k+\frac12}\big)+o\big(|z|^k\big),
	\]
	where $B_k$ is a nowhere vanishing section.
	A direct computation gives 
	$$ |\omega_0|=|du|\ge C\,\Big(\min_{\Sigma}|B_k|\Big)\,r^{k-\frac12}$$
	for $r$ sufficiently small and some constant $C>0$, hence $|\omega_0|>0$ on $U\setminus\Sigma$.
	
	\smallskip
	(2) Since $\omega_0$ is polyhomogeneous, all derivatives of $\omega_0$ have at most polynomial growth
	near $\Sigma$.
	If $f\in \Msch$ is rapidly decaying along $\Sigma$, then $f/|\omega_0|$ is also rapidly decaying by (1),
	and differentiating $f/|\omega_0|$ produces only sums of products of derivatives of $f$
	(still rapidly decaying) with derivatives of $|\omega_0|^{-1}$ (at most polynomially growing).
	Therefore, every derivative of $f/|\omega_0|$ is rapidly decaying, i.e.\ $f/|\omega_0|\in \Msch$.
\end{proof}

By Lemma~\ref{lem:k-nondegenerate_satisfies_tech_assumptions},
the assumptions of Proposition~\ref{prop:main-result-linear-theory} are satisfied for $\vec{k}$-nondegenerate $\ZT$-harmonic 1-forms. Therefore, applying Proposition~\ref{prop:main-result-linear-theory} on these 1-forms gives us the following corollary.

\begin{corollary}\label{cor:k-nondegenerate_apply_linear_theory}
	Let $(M,g_0)$ be a closed Riemannian manifold and let $\Sigma\subset M$ be a smoothly embedded
	codimension-two submanifold such that the normal bundle $N$ is oriented and $N^{-\frac32}$ is trivial.
	Let $(\Sigma,\mathcal I,\omega_0)$ be a $\vec{k}$-nondegenerate $\ZT$-harmonic $1$-form.
	Then, for any fixed integer $m\ge 1$ and any prescribed smooth sections
	\[
	\widetilde{B}_i \in \Gamma\!\big(N^{-\frac32}\big), \qquad 1\le i\le m,
	\]
	there exists a one-parameter family of smooth metrics $\{g_s\}$ with $g_{s}|_{s=0}=g_0$
	such that \eqref{eq_variationABterms} holds for all $1\le i\le m$.
	Moreover, for any fixed compact subset $K\subset M\setminus \Sigma$,
	the family can be chosen so that $g_s|_K=g_0|_K$ for all $s$.
\end{corollary}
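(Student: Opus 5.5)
The corollary follows by combining Lemma~\ref{lem:k-nondegenerate_satisfies_tech_assumptions} with Proposition~\ref{prop:main-result-linear-theory}. The plan is first to check that the hypotheses of the proposition hold in the \(\vec k\)-nondegenerate setting, and then to specialise the prescribed data to the \(B\)-level.

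\emph{Step 1: hypotheses.} Since \(\omega_0\) is \(\vec{k}\)-nondegenerate, Lemma~\ref{lem:k-nondegenerate_satisfies_tech_assumptions} gives both assumptions directly. First, \(|\omega_0|>0\) on \(U\setminus\Sigma\) for a tubular neighbourhood \(U\). Second, division by \(|\omega_0|\) preserves \(\MS\). The lemma's argument is carried out componentwise. On each component \(\Sigma_i\), the lower bound \(|\omega_0|\ge C r^{k_i-\frac12}\) holds, and we take \(U\) to be a union of tubular neighbourhoods of the \(\Sigma_i\). Polyhomogeneity of \(\omega_0\) gives at most polynomial growth of all derivatives of \(|\omega_0|^{-1}\), so Schwartz decay survives differentiation.

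\emph{Step 2: apply the proposition.} Since \(N^{-\frac32}\) is trivial, the prescribed sections \(\widetilde B_i\in\Gamma(N^{-\frac32})\) are arbitrary smooth sections of the bundle carrying the \(B\)-coefficient in \eqref{eq:the-actual-expansion}. The proposition allows prescribed data in \(\Gamma(N^{-(i+\frac12)})\) for the \(i\)-th derivative. In the proof, however, the prescribed section only enters through the Schwartz approximation of Proposition~\ref{prop_existence_schwartz_approximation}, which is built from a leading term \(\widetilde B\,z^{3/2}\) with \(\widetilde B\in\Gamma(N^{-3/2})\). So I would apply the proposition, as it is actually proved, with the \(i\)-th datum taken in \(\Gamma(N^{-3/2})\) for every \(i\). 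This produces \(g_s=g_0+\sum_{i=1}^m \frac{s^i}{i!}T^{\langle i\rangle}\) and forms \(\omega_s\) in the fixed class \([\omega_0]\), with \(\partial_s^iA_s|_{s=0}=0\) and \(\partial_s^iB_s|_{s=0}=\widetilde B_i\). These are exactly the identities \eqref{eq_variationABterms}.

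\emph{Step 3: support condition.} For the statement about a compact set \(K\subset M\setminus\Sigma\), I would shrink \(U\) and the cutoff \(\chi\) so that \(U\cap K=\emptyset\). Each \(T^{\langle i\rangle}\) in \eqref{eq_explicit_formula_Tm} is built from \(\chi f_i\), \(\chi h_i\) and \(L^i_0\). Inductively, \(L^i_0\) involves only the earlier \(T^{\langle j\rangle}\), and these vanish off \(U\), so \(L^i_0\) vanishes there too. Hence \(T^{\langle i\rangle}\) is supported in \(U\), and \(g_s|_K=g_0|_K\).

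\emph{Main obstacle.} The main obstacle is the smooth extension of \(T^{\langle i\rangle}\) across \(\Sigma\). This requires \(|\omega_0|^{-2}\) times a Schwartz form to remain Schwartz. That is where assumption (ii), hence the \(\vec k\)-nondegeneracy through the lower bound \(r^{k_i-\frac12}\), is essential. Applying (ii) twice handles the square \(|\omega_0|^{-2}\), and multiplication by the polyhomogeneous factor \(\omega_0\) keeps the expression Schwartz.
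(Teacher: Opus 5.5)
Your proposal is correct and follows the paper's argument: Lemma~\ref{lem:k-nondegenerate_satisfies_tech_assumptions} verifies hypotheses (i)--(ii) of Proposition~\ref{prop:main-result-linear-theory}, and that proposition then yields the family $g_s$ directly. Your extra remarks only unpack steps already contained in the proposition's proof, namely reading the $i$-th datum in $\Gamma(N^{-3/2})$ rather than $\Gamma(N^{-(i+\frac12)})$, the support argument for $K$, and the smooth extension of $T^{\langle i\rangle}$; the triviality of $N^{-\frac32}$ is not actually used at this stage.
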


\section{Deformations of k-nondegenerate \texorpdfstring{$\ZT$}{Z/2} harmonic 1-forms} 
In this section we prove the main existence result of the paper. The aim is to show that under certain conditions and after a suitable perturbation of the ambient metric, a degenerate $\ZT$-harmonic $1$-form can be deformed to a nondegenerate one.

Recall from Section~\ref{sec_metric_perturbation} that we constructed a one-parameter family of metrics $g_s$ which gives precise control of the Taylor expansions of the leading coefficients $A$ and $B$ along the singular set $\Sigma$. Following Donaldson~\cite{Donaldson2021}, the remaining step is to deform the singular set itself. We parameterize nearby singular sets by normal sections $v \in \Gamma(N)$ and consider the corresponding map $A(g, v)$ whose zeros encode solutions.

The main analytic input is a non-uniform Nash--Moser implicit function theorem. In our situation the relevant tame constants depend on the parameter $s$ (in particular, the inverse of the linearized operator and the quadratic error deteriorate as $s\to 0$), so we must work with a parameter-dependent Nash--Moser scheme and keep track of the $s$-dependence throughout.

We now state the main theorem.

\begin{theorem}\label{thm:main-theorem}
	Let $(\Sigma,\mathcal{I},\omega_0)$ be a degenerate $\ZT$-harmonic $1$-form on a closed Riemannian manifold $(M,g_0)$.
	Let $U$ be a tubular neighbourhood of $\Sigma$ and fix a compact set $K\subset M\setminus \Sigma$.
	Assume that
	\begin{enumerate}[label=(\roman*)]
		\item $|\omega_0|>0$ on $U\setminus \Sigma$,
		\item for every $f\in \Msch$, one has $f/|\omega_0|\in \Msch$, and
		\item there exist a section $\widetilde B\in \Gamma(N^{-3/2})$ and a constant $C>0$ such that for all $s\in(0,1)$ and all $t\in \Sigma$,
		\[
		\bigl|B(g_0)(t)+ s\,\widetilde B(t)\bigr|\ge Cs .
		\]
	\end{enumerate}
	There exist a one-parameter family of smooth metrics $\{g_s\}_{s\in(0,1)}$ 
	a deformation $\{\Sigma_s\}_{s\in(0,1)}$ of $\Sigma$, and a family of nondegenerate $\ZT$-harmonic $1$-forms
	\[
	(\Sigma_s,\mathcal{I},\omega_s)\quad\text{on}\quad(M,g_s),
	\]
	such that $g_s|_{s=0}=g_0$, $g_s|_K=g_0|_K$, and
	$\omega_s$ represents the same cohomology class as $\omega_0$ in $H^1_-(\widehat M)$.
\end{theorem}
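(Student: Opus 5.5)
We first fix the metric family and then deform the singular set. Apply Proposition~\ref{prop:main-result-linear-theory} with $m$ large (to be chosen depending on the loss of derivatives and the vanishing order), prescribing $\tilde B_1=\widetilde B$ and $\tilde B_i=0$ for $2\le i\le m$. Hypotheses (i),(ii) of Theorem~\ref{thm:main-theorem} are exactly those of the proposition, so we obtain metrics $g_s$ with $g_s|_K=g_0|_K$ and $L^2$-bounded forms $\omega_s'$ in the class $[\omega_0]$ with
\[
A(g_s,\Sigma)=O(s^{m+1}),\qquad B(g_s,\Sigma)=B(g_0)+s\widetilde B+O(s^{m+1})
\]
in every $C^\ell$ norm (after reparametrizing $s$ and modifying the Taylor polynomial in $s$ if needed). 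By hypothesis (iii), $|B(g_s,\Sigma)|\ge Cs/2$ for small $s$, so $\omega_s'$ is nearly a nondegenerate solution: its $A$-coefficient is tiny compared with the nondegeneracy scale $s$ of $B$.

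Next, following Donaldson~\cite{Donaldson2021}, we parameterize nearby singular sets $\Sigma_v$ by normal sections $v\in\Gamma(N)$ and, using \cite[Proposition~4.2]{Donaldson2021} to identify normal data, consider the map $v\mapsto A(g_s,v)\in\Gamma(N^{-1/2})$. Zeros of this map are exactly genuine $\ZT$-harmonic forms on $(M,g_s)$ with singular set $\Sigma_v$ in the class $[\omega_0]$. Donaldson's computation gives the linearization at $v=0$ as
\[
D_vA(g_s,0)\,\dot v=c\,B(g_s)\,\dot v+\text{(order } \tfrac12 \text{ pseudodifferential terms in } \dot v),
\]
whose leading part is an elliptic operator of Dirichlet-to-Neumann type. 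When $|B|\ge Cs$ we expect a right inverse with tame bound $\|Q_s\phi\|_{\ell}\le C_\ell s^{-p}\|\phi\|_{\ell+d}$ for some fixed $p,d$. The quadratic remainder has tame estimates that are uniform, or at worst polynomially degenerate in $s^{-1}$. These estimates should be derived by rescaling: on the $s$-dependent region where $B$ is small, the degeneracy is quantitative, so all constants are powers of $s$.

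Finally, we run the quantitative Nash--Moser scheme of Appendix~\ref{sec:appendix}. Its conclusion should say that if the initial error satisfies $\|A(g_s,0)\|_{\ell_0}\le \varepsilon(s)$, with $\varepsilon(s)$ an explicit power $s^{q}$ determined by the constants above, then a zero $v_s$ exists with $\|v_s\|$ small. Since $\|A(g_s,0)\|_{\ell_0}=O(s^{m+1})$, choosing $m+1>q$ closes the argument for all small $s$, say $s\in(0,s_0)$. Reparametrizing $s\mapsto s_0 s$ then covers $(0,1)$. Set $\Sigma_s=\Sigma_{v_s}$. The resulting $\omega_s$ has $A=0$ and $B=B(g_s)+o(s)$, which is nowhere vanishing, so it is nondegenerate. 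It lies in the class $[\omega_0]$ because the class is fixed throughout.

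The main obstacle is the second step: showing that the right inverse and the tame constants degrade only polynomially in $s$, with an exponent independent of $m$. Otherwise increasing $m$ would not beat them. I would first establish the invertibility estimate for the model operator $v\mapsto B\,v+$(lower order), with $|B|\sim s$, by a weighted $L^2$ energy argument, and then absorb the remaining terms perturbatively.
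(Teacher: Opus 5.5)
Your architecture matches the paper's. You apply Proposition~\ref{prop:main-result-linear-theory} with $\tilde B_1=\widetilde B$ and $\tilde B_i=0$ for $i\ge 2$, study $v\mapsto A(g_s,v)$, run the quantitative Nash--Moser scheme of Appendix~\ref{sec:appendix} with a threshold $s^q$ independent of $m$, and take $m+1>q$. The gap is in the step you yourself call the main obstacle, and it comes from misreading the linearization. By \eqref{eq:linearization}, $D_vA(g,v)w=\frac32 Bw+\mathcal R(g,v,A,w)$ with $\mathcal R$ linear in $A$. There is no $A$-independent pseudodifferential part, so there is no elliptic Dirichlet-to-Neumann operator to invert and no role for a weighted $L^2$ energy estimate. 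Your description is also internally inconsistent: if an elliptic order-$\frac12$ part were leading, the size of $B$ would not govern invertibility, yet your model operator is $v\mapsto Bv+$ lower order. The missing idea is Donaldson's approximate inverse $\Psi_s(u,\eta)=\frac23B_s(u)^{-1}\eta$. Its error $\mathcal E_s(u,A_s(u),\eta)$ is bilinear in $A_s(u)$ and $\eta$, which is exactly the structure \eqref{eq:definition-inverse} the appendix requires. It reduces all $s$-dependence of the inverse to H\"older estimates for $B_s(u)^{-1}$ (Lemmas~\ref{lem:tame-estimates-inverse-As} and~\ref{lem:tame-estimates-quadratic-error}). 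As written, your right inverse is only ``expected'', so nothing actually feeds the Nash--Moser constants.

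Two quantitative points are also missing.

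\emph{The neighbourhood must shrink with $s$.} The scheme needs the approximate inverse, with tame bounds in $u$, at every iterate $u$, not only at $v=0$. But $|B_s(u)|\ge c_0s/2$ holds only on $\|u\|_{C^{3d,\alpha}}<s\delta$ with $\delta\le c_0/(2C_1)$. This radius enters $\theta_0$ through $\widehat C_{3d,3d}V/(s\delta)$. It is also what justifies your final claim that $B(g_s,v_s)$ is nowhere vanishing: ``$\|v_s\|$ small'' is not enough, you need $\|v_s\|_{C^{3d,\alpha}}<s\delta$.

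\emph{The power of $s$ depends on the norm index.} A bound $s^{-p}$ with $p$ independent of the norm index cannot hold under hypothesis (iii). If $B(g_0)$ vanishes somewhere, e.g.\ $B(g_0)=t$ and $\widetilde B=i$ near a zero, then $\|B_s^{-1}\|_{C^k}$ is of order $s^{-k-1}$. The paper uses $s^{-k-2}$ at order $k$. The exponent stays independent of $m$ only because the Nash--Moser constants involve norms up to order $T+2d$, with $T=16d^2+41d+24$ depending on $d$ alone. This gives $\theta_0=O(s^{-2(T+2)})$, the threshold $\theta_0^{-4}=O(s^{8(T+2)})$, and the choice $m=8(16d^2+41d+26)$. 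Here $d$ is independent of $m$ because it comes only from the tame structure of $A$, $B$ and $\mathcal R$.
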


\subsection{Donaldson's Deformation Theory}

We briefly recall the analytic structure behind Donaldson's deformation argument in~\cite{Donaldson2021}. Let $M$ be a compact $n$-manifold. Let $\mathcal{M}$ denote the space of smooth Riemannian metrics on $M$, and let $\mathcal{S}$ denote the space of smooth, codimension-2, co-oriented submanifolds of $M$. Both $\mathcal{M}$ and $\mathcal{S}$ are tame Fréchet manifolds.

Fix a reference submanifold $\Sigma \in \mathcal{S}$ and let $N$ denote its normal bundle. As discussed in \cite[Section 4]{Donaldson2021}, a neighbourhood of $\Sigma$ in $\mathcal{S}$ can be identified with a neighbourhood of the zero section in the space of sections $\Gamma(N)$. Specifically, using the normal exponential map, any section $v \in \Gamma(N)$ that is sufficiently small determines a nearby submanifold $\Sigma_v$. This identification allows us to parameterize local deformations of $\Sigma$ directly by sections of its normal bundle.

Let $(\Sigma_0, \mathcal{I}_0, \omega_0)$ be an $L^2$-bounded $\ZT$-harmonic 1-form over $(M, g_0)$. Let $U_1 \subset \mathcal{M}$ be a neighbourhood of $g_0$ and let $U_2 \subset \Gamma(N)$ be a neighbourhood of the zero section.
For any pair $(g, v) \in U_1 \times U_2$, let $\Sigma_v$ be the submanifold determined by $v$. There exists a unique $L^2$-harmonic 1-form $\omega$ on $(M, g)$ representing the same cohomology as $\omega_0$ in $H^1_-(\widehat{M})$. As explained in Section \ref{sec:background:local-expansions}, $\omega$ can be written as
\begin{equation}
	\label{eq:expansion}
	\omega
	= d\,\Re\bigl(A(t)\,z^{\frac12}+B(t)\,z^{\frac32}\bigr)
	-\frac12\, \d\,\Re\bigl(A(t)\,z^{\frac12}\bigr)\,\Re\bigl(\mu(t)\,z\bigr)
	+ o(r)
\end{equation}
using some suitable normal complex coordinate $z$.

Now, we fixed a cohomology class. Note that the coefficients $A$ and $B$ are originally defined over the perturbed submanifold $\Sigma_v$. To work within a fixed function space, we use the diffeomorphism induced by the normal exponential map to identify $\Sigma_v$ with $\Sigma_0$. This allows us to view $A$ and $B$ as sections over $\Sigma_0$; for technical details on this identification and the corresponding bundle maps, we refer to \cite[Section 4]{Donaldson2021}. Thus, we define the obstruction map
\[
A: U_1 \times U_2 \to \Gamma(\Sigma_0, N^{-1/2}), \quad (g, v) \mapsto A(g, v),
\]
and the secondary map
\[
B: U_1 \times U_2 \to \Gamma(\Sigma_0, N^{-3/2}), \quad (g, v) \mapsto B(g, v).
\]

Moreover, the maps $A$ and $B$ are smooth tame maps in the sense of Hamilton \cite{Hamilton1982}. This means that they satisfy linear growth estimates with respect to the graded norms of the Fréchet spaces. Specifically, there exists an integer $d \ge 0$ such that for every $i \ge 0$, we have a tame estimate of the form:
\begin{equation} \label{eq:tame_estimates}
	\| A(g, v) \|_{C^{i,\alpha}} \leq C_i \left( 1 + \| g \|_{C^{i+d,\alpha}} + \| v \|_{C^{i+d,\alpha}} \right).
\end{equation}
An analogous estimate holds for the secondary map $B$.

Donaldson computes the linearization of $A$ with respect to the normal variation $w \in \Gamma(N)$. He shows that the partial derivative takes the form:
\begin{equation} \label{eq:linearization}
	D_v A(g, v) \cdot w = \frac{3}{2} B w + \mathcal{R}(g, v, A, w),
\end{equation}
where $\mathcal{R}$ is a remainder term that depends linearly on $A$ and $w$.

Equation \eqref{eq:linearization} highlights the critical role of the secondary coefficient. If $A(g, v) = 0$, the remainder term $\mathcal{R}$ vanishes, and the linearization reduces to the multiplication map $w \mapsto \frac{3}{2} B w$. If $B$ is nowhere vanishing, this map is an algebraic isomorphism from $\Gamma(N)$ to $\Gamma(N^{-1/2})$.

However, the obstruction map $A$ is defined via the solution of an elliptic operator on the manifold, which introduces a `loss of derivatives' in the Fréchet topology. Standard Banach space methods fail here. Instead, Donaldson applies the Nash-Moser implicit function theorem. The key requirement is the existence of an `approximate inverse' with quadratic error.

\begin{theorem}[\cite{Donaldson2021}, Theorem 3]
	If $B$ is nowhere vanishing, then the map $D_v A$ admits a right inverse with quadratic error. Specifically, the map $\Psi(\eta) = \frac{2}{3} B^{-1} \eta $ serves as an approximate inverse. This means that for any $\eta \in \Gamma(N^{-1/2})$, we have
	\begin{equation} \label{eq:quadratic_error}
		D_v A(g, v) \left( \Psi(\eta) \right) = \eta + Q(g, v, A, \eta),
	\end{equation}
	where $Q$ is a smooth tame map that is bilinear in $A$ and $\eta$.
\end{theorem}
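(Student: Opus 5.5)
The statement is Donaldson's Theorem 3, and the plan is to verify it directly from the structure of the linearization \eqref{eq:linearization}. The key point is that the remainder $\mathcal{R}(g,v,A,w)$ is linear in $A$, so it disappears on the zero set of $A$ and is otherwise a tame error term.

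First, I will make the remainder explicit. Differentiate the expansion \eqref{eq:expansion} along the normal variation $\Sigma_{v+\epsilon w}$, after pulling back to $\Sigma_0$ with the normal exponential diffeomorphism of \cite[Section~4]{Donaldson2021}. Moving the branch locus by $w$ shifts $z\mapsto z-\epsilon w$. Expanding $\Re(B(z-\epsilon w)^{3/2})$ produces $-\tfrac32\Re(Bw\,z^{1/2})$ at order $z^{1/2}$, which gives the leading term $\tfrac32 Bw$ (up to sign conventions).

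Next I must check that every other contribution carries a factor of $A$. These are:
\begin{itemize}
\item the shift of $A z^{1/2}$, which gives $A w\, z^{-1/2}$ and is absorbed by re-expanding in the new coordinates;
\item the $\mu$-correction term in \eqref{eq:expansion};
\item the variation of the identification maps and of the harmonic representative $u$, through the elliptic problem \eqref{eq_u_s} with moving singular set.
\end{itemize}
For the last item, I will show the variation $\dot u$ solves $\Delta\dot u = 0$ away from $\Sigma_0$, with a prescribed singular part. That singular part is linear in $(A,B)\cdot w$, so its $z^{1/2}$ coefficient is $\tfrac32 Bw$ plus terms linear in $A$. This yields \eqref{eq:linearization}, with $\mathcal{R}$ tame, linear in $w$, and linear in $A$.

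With that in hand, set $\Psi(\eta)=\tfrac23 B^{-1}\eta$. Since $B$ is nowhere vanishing, $B^{-1}$ is a smooth section of $N^{3/2}$, and $\Psi$ maps $\Gamma(N^{-1/2})$ to $\Gamma(N)$. Its tame estimates come from the Moser product inequalities in $C^{i,\alpha}$, where the constants depend on $\min|B|^{-1}$ and on $\|B\|_{C^{i+d,\alpha}}$. The latter is controlled by the tame estimate \eqref{eq:tame_estimates} for the map $B$.

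Substituting $\Psi(\eta)$ into the linearization gives $\eta+\mathcal{R}(g,v,A,\tfrac23B^{-1}\eta)$. I then define $Q(g,v,A,\eta):=\mathcal{R}(g,v,A,\tfrac23B^{-1}\eta)$. This is bilinear in $(A,\eta)$ and tame as a composition of tame maps. Hence $Q$ is precisely a quadratic error in Hamilton's sense: the error in $D_vA\circ\Psi$ is bounded by $\|A\|\cdot\|\eta\|$ in tame norms.

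The main obstacle is the claim that $\mathcal{R}$ is genuinely linear in $A$ with tame bounds. This requires controlling the variation of the polyhomogeneous expansion under motion of the singular set, which involves edge-calculus estimates for $\Delta^{-1}$ on the branched cover with loss of derivatives. My first attempt will follow Donaldson's explicit approach. I will pass to adapted coordinates in which $\Sigma_v$ is straightened by a diffeomorphism $F_v$. The problem then becomes a metric perturbation $F_v^*g$ with a fixed branch locus. After that, the variation formulas of Proposition~\ref{prop:linearized-equation} apply with $\dot g=\mathcal L_W g$, where $W$ extends $w$, together with the explicit $z$-shift computation above.
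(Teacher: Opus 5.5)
Your proposal is correct and follows the same route the paper relies on. The paper takes Donaldson's linearization \eqref{eq:linearization}, with $\mathcal{R}$ linear in $A$ and $w$, as given; the statement then follows by substituting $w=\Psi(\eta)=\tfrac23B^{-1}\eta$ and setting $Q(g,v,A,\eta):=\mathcal{R}(g,v,A,\tfrac23B^{-1}\eta)$, which is exactly how the paper later defines the error term $\mathcal{E}_s$. Your additional sketch of where \eqref{eq:linearization} comes from goes beyond what the paper supplies, since the paper simply cites \cite{Donaldson2021} for it. That sketch has the $z$-shift producing $\tfrac32Bw$, with every other contribution carrying a factor of $A$, including the nonlocal one from the singular $Aw\,z^{-1/2}$ term in the varied harmonic representative.
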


The local solvability in the non-degenerate case follows directly from this result. Since $B(g_0, 0)$ is assumed to be nowhere vanishing, it admits a uniform lower bound, which ensures that the multiplication operator $B^{-1}$ is bounded. Consequently, the approximate inverse $\Psi$ satisfies the necessary tame estimates to drive the convergence of the Newton iteration, yielding the following theorem:

\begin{theorem}[\cite{Donaldson2021}, Theorem 1]
	Suppose that $A(g_0, 0) = 0$ and that the non-degeneracy condition holds, i.e., $B(g_0, 0)$ is nowhere vanishing on $\Sigma$. Then there exist a neighbourhood $\mathcal{U} \subset \mathcal{M}$ of $g_0$ and a neighbourhood $\mathcal{V} \subset \Gamma(N)$ of the zero section such that for every metric $g \in \mathcal{U}$, there exists a unique normal section $v_g \in \mathcal{V}$ satisfying
	\[
	A(g, v_g) = 0.
	\]
	Moreover, the map $g \mapsto v_g$ is a smooth tame map from $\mathcal{U}$ to $\mathcal{V}$.
\end{theorem}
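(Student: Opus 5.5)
We would deduce the statement from a Nash--Moser implicit function theorem in Zehnder's form, which only requires an approximate right inverse whose error is quadratic in the value of the map. Consider the tame map $A\colon U_1\times U_2\to\Gamma(\Sigma_0,N^{-1/2})$. We need three inputs. The tame estimates \eqref{eq:tame_estimates} for $A$, together with analogous tame estimates for $D_vA$ and $D^2_vA$ with some fixed loss of derivatives $d$. The linearization formula \eqref{eq:linearization}. The approximate inverse $\Psi(\eta)=\frac23 B(g,v)^{-1}\eta$ of \cite[Theorem~3]{Donaldson2021}, satisfying \eqref{eq:quadratic_error}.

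\textbf{Step 1: uniform invertibility of $B$.} Since $B(g_0,0)$ is nowhere vanishing and $\Sigma$ is compact, we have $|B(g_0,0)|\ge 2c>0$. By tame continuity of $B$, the bound $|B(g,v)|\ge c$ persists on a $C^{d,\alpha}$-neighbourhood of $(g_0,0)$. On that neighbourhood the map $\eta\mapsto\frac23B^{-1}\eta$ is a tame linear family: the $C^{i,\alpha}$ norm of $B^{-1}$ is controlled by $c^{-1}$ and $\|B\|_{C^{i,\alpha}}$, hence by $1+\|g\|_{C^{i+d,\alpha}}+\|v\|_{C^{i+d,\alpha}}$. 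It follows that $\Psi$ and the error $Q$ in \eqref{eq:quadratic_error} obey tame estimates with constants depending only on $c$ and the $C_i$.

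\textbf{Step 2: Newton scheme for fixed $g$.} Fix $g$ close to $g_0$. Choose smoothing operators $S_\theta$ on $\Gamma(N)$ and run the modified Newton iteration
\[
v_{j+1}=v_j-S_{\theta_j}\Psi(g,v_j)\,A(g,v_j),\qquad \theta_j=\theta_0^{(3/2)^j},\qquad v_0=0.
\]
The initial error $A(g,0)$ is small in high norms when $g$ is close to $g_0$ in a suitable tame sense, because $A(g_0,0)=0$ and $A$ is tame. Expanding $A(g,v_{j+1})$ to second order gives three kinds of terms: a quadratic Taylor remainder, the term $Q(A,\eta)$, which is bilinear in $A$ and so quadratic in the current error, and a smoothing error $(1-S_\theta)$. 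The standard interpolation argument then gives $\|A(g,v_j)\|_{low}\le\theta_j^{-\mu}$ with high norms growing at most like $\theta_j^{\nu}$. Hence $v_j$ converges in every $C^{i,\alpha}$ to some $v_g$ with $A(g,v_g)=0$. Alternatively, one can quote Hamilton's theorem \cite{Hamilton1982} directly after checking its hypotheses via Step~1.

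\textbf{Step 3: uniqueness and smooth tame dependence.} At a genuine zero the remainder $\mathcal R$ vanishes, so $D_vA(g,v_g)w=\frac32Bw$ is an honest tame isomorphism. Suppose $v,v'\in\mathcal V$ both solve $A(g,\cdot)=0$. Writing
\[
0=A(g,v')-A(g,v)=\tfrac32 B(v'-v)+O\bigl(\|v'-v\|^2\bigr)
\]
in a low norm and using $|B|\ge c$, we get $v=v'$ once $\mathcal V$ is small in that norm. Smoothness of $g\mapsto v_g$ then follows from the tame implicit function part of Nash--Moser: differentiating $A(g,v_g)=0$ gives $Dv_g\cdot h=-\frac23B^{-1}D_gA\cdot h$, which is tame.

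We expect the main obstacle to be Step~1 combined with the tame bookkeeping for $Q$. One must verify that $Q$ and the second derivative of $A$ are tame with constants uniform on a fixed neighbourhood. This rests on Donaldson's identification of $\Sigma_v$ with $\Sigma_0$ and on elliptic estimates for the Laplacian on the branched cover with cone metric. For the present statement the uniform lower bound on $|B|$ makes this routine. This is exactly the point that fails in the degenerate setting of Theorem~\ref{thm:main-theorem}, which motivates the parameter-dependent scheme.
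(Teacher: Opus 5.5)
Your Steps 1 and 2 follow the route the paper indicates. The paper does not reprove this theorem: it cites Donaldson's Theorem 3 and notes that the uniform lower bound on $|B(g_0,0)|$ makes $\Psi=\frac23B^{-1}$ tame, so the quadratic-error identity drives a Nash--Moser iteration. Your three error terms are exactly $\varphi_1,\varphi_2,\varphi_3$ in Lemma~\ref{lem:property-1-Raymond}, and the existence part is fine. The gap is in Step 3, the part of the statement that goes beyond solvability. Both arguments there ignore the loss of derivatives, which is the whole reason for using Nash--Moser.

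\emph{Uniqueness.} The second-order remainder only satisfies $\|D_v^2A(\cdot)(w,w)\|_{C^{2d,\alpha}}\le C\|w\|_{C^{3d,\alpha}}^2$, as in Lemma~\ref{lem:tame-estimates-As}. So from $\frac32B(g,v)w=-\int_0^1(1-t)D_v^2A(g,v+tw)(w,w)\,dt$ you cannot absorb the right-hand side by making $\mathcal V$ small in the same low norm. You need to interpolate, $\|w\|_{C^{3d,\alpha}}^2\le C\|w\|_{C^{2d,\alpha}}\|w\|_{C^{4d,\alpha}}$, which gives $\|w\|_{C^{2d,\alpha}}\le C'\|w\|_{C^{2d,\alpha}}\|w\|_{C^{4d,\alpha}}$. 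Then take $\mathcal V$ convex and small in $C^{4d,\alpha}$, shrinking $\mathcal U$ so that the Nash--Moser solutions land there. This uses that $D_vA(g,v)=\frac32B(g,v)$ holds exactly because $A(g,v)=0$.

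\emph{Smooth tame dependence.} The standard tame implicit function theorem is not available here. It requires $D_vA$ to be invertible, with a smooth tame family of inverses, on a whole neighbourhood, whereas $D_vA=\frac32B$ exactly only on the zero set. Also, differentiating $A(g,v_g)=0$ presupposes that $g\mapsto v_g$ is differentiable. The missing steps are:
\begin{itemize}
\item[(a)] Continuity of $g\mapsto v_g$ in every $C^{k,\alpha}$. This follows, for example, from the uniform high-norm bounds the iteration produces, Arzel\`a--Ascoli, and the uniqueness above.
\item[(b)] A Taylor expansion of $0=A(g+h,v_{g+h})-A(g,v_g)$ at the zero $(g,v_g)$, where $\frac23B^{-1}$ is an exact inverse, combined with the same interpolation. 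This gives first $\|v_{g+h}-v_g\|_{C^{2d,\alpha}}\lesssim\|h\|$ in a suitable higher norm of $h$, and then differentiability with $Dv_g\,h=-\frac23B^{-1}D_gA(g,v_g)h$.
\item[(c)] Bootstrapping this formula to obtain higher derivatives and tame bounds.
\end{itemize}
Alternatively, you could quote a quadratic-error Nash--Moser theorem that supplies a smooth tame local right inverse for $(g,v)\mapsto(g,A(g,v))$, and identify its output with $v_g$ via uniqueness.
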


The central difficulty we address in this paper is the failure of the non-degeneracy condition. In our setting, the coefficient $B$ approaches zero as $s \to 0$, causing the uniform bound on $B^{-1}$ to fail. Specifically, under the family of metrics $g_s$ constructed in Theorem \ref{thm:main-theorem}, the coefficient $B_s$ behaves asymptotically like $s\,\widetilde{B}$. Consequently, the norm of the inverse scales as $\|B_s^{-1}\| \sim s^{-1}$.

This creates a critical competition: the error term $A_s$ (which we control via Section 3) must vanish at a sufficient rate to counteract the blow-up of the linearized operator as $s \to 0$. Standard perturbation arguments are insufficient here, as we must ensure that the solution $\Sigma_s$ remains within the domain of validity of the iteration scheme for every $s$. Therefore, our proof relies on a precise, quantitative version of the Nash-Moser theorem—carried out in the following subsection and Appendix A—where the dependence of all analytic estimates on the degeneration parameter $s$ is explicitly tracked.

\subsection{Quantitative Estimates for the Quantitative Nash--Moser}
In this subsection, we establish the quantitative estimates required for the Nash--Moser scheme in the degenerating regime $s \to 0$. To apply the scale-dependent implicit function theorem (Proposition A.2), we must carefully track the dependence of all relevant analytic constants on the parameter $s$. Specifically, we estimate the norm of the approximate inverse $\Psi_s$ (which diverges as $B_s \to 0$), the tame estimates for the error term $A_s$, and the quadratic error term.

Given a $\vec{k}$-nondegenerate $\ZT$ harmonic 1-form $(\omega,\MI,\Sigma)$. Recall from Proposition~\ref{prop:main-result-linear-theory} that for any fixed integer $m\ge 1$, we have constructed a one-parameter family of smooth metrics $\{g_s\}_{s\in(0,1)}$ extending $g_0$. In the notation of our obstruction map, the reference submanifold $\Sigma$ corresponds to the zero section $0 \in \Gamma(N)$. The construction in  ensures that the Taylor coefficients at the zero section satisfy:
\begin{equation}
	\label{eq_taylor_expansion_label_m}
	\left.\partial_s B(g_s, 0)\right|_{s=0}=\widetilde B,
	\qquad
	\left.\partial_s^i A(g_s, 0)\right|_{s=0}=0,\quad \left.\partial_s^{i+1}B(g_s, 0)\right|_{s=0}=0 \quad \text{for } 1\le i\le m.
\end{equation}
In particular, the coefficients admit the following asymptotic expansions:
\begin{align*}
	A(g_s, 0) = \mathcal{O}(s^{m+1}) \quad \text{and} \quad
	B(g_s, 0) = B_0 + s \,\widetilde B + \mathcal{O}(s^{m+1}).
\end{align*}
We now consider variations of the singular set parameterized by sections $v \in \Gamma(N)$. By restricting the general maps $A(g, v)$ and $B(g, v)$ to our specific family of metrics, we define the parameter-dependent operators:
\[
A_s(v) := A(g_s, v), \qquad B_s(v) := B(g_s, v).
\]
Since the bundle $N^{-3/2}$ is trivial, we shall regard $B_s(v)$ simply as a complex-valued function on $\Sigma$. With this notation, our goal is to find a small section $v$ such that $A_s(v)=0$. The term $A_s(0)$ represents the initial error at the reference submanifold, which is small at $\mathcal{O}(s^{m+1})$, but non-zero.

We now record the quantitative estimates needed for the Nash--Moser scheme in the degenerating
regime $s\to 0$. The key point is to make the $s$--dependence of all constants explicit.
We first estimate the decay rate of the linearized operator $B_s$.

\begin{lemma}\label{lem:B-is-invertible}
	Fix $m\in\N$ in \eqref{eq_taylor_expansion_label_m}. There exist $s_0>0$ and $C>0$ such that for all $0<s<s_0$,
	the function $B_s(0)\colon \Sigma\to\C$ is nowhere vanishing, hence invertible, and
	\[
	\|B_s(0)^{-1}\|_{C^0(\Sigma)} \le C\, s^{-1}.
	\]
\end{lemma}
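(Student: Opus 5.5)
We use the Taylor expansion $B(g_s,0)=B_0+s\,\widetilde B+\mathcal O(s^{m+1})$ from \eqref{eq_taylor_expansion_label_m}, combined with the pointwise lower bound in hypothesis (iii) of Theorem~\ref{thm:main-theorem}: $|B_0(t)+s\widetilde B(t)|\ge C_1 s$ for all $s\in(0,1)$ and $t\in\Sigma$. The argument is a uniform remainder estimate followed by absorption of the remainder for small $s$.

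\textbf{Step 1 (uniform remainder).} Make the $\mathcal O(s^{m+1})$ term uniform in $t\in\Sigma$. The map $s\mapsto g_s$ is a polynomial in $s$ with smooth tensor coefficients; by construction in Proposition~\ref{prop:main-result-linear-theory}, $g_s=g_0+\sum_{j=1}^{m}\frac{s^j}{j!}T^{\langle j\rangle}$. The map $g\mapsto B(g,0)$ is smooth tame, as recalled in \eqref{eq:tame_estimates}. Hence $s\mapsto B(g_s,0)\in C^0(\Sigma)$ is $C^{m+1}$ on $[0,s_1]$ for some $s_1>0$, with $\partial_s^{m+1}B(g_s,0)$ bounded in $C^0(\Sigma)$ by a constant $C_2$. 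Taylor's theorem with integral remainder, applied in the Banach space $C^0(\Sigma)$, then gives
\[
\sup_{t\in\Sigma}\bigl|B(g_s,0)(t)-B_0(t)-s\widetilde B(t)\bigr|\le C_2\, s^{m+1},\qquad 0\le s\le s_1 .
\]
Here we have used the jet identities \eqref{eq_taylor_expansion_label_m}, namely $\partial_s B|_{0}=\widetilde B$ and $\partial_s^{i}B|_0=0$ for $2\le i\le m+1$. The Taylor polynomial of order $m$ is exactly $B_0+s\widetilde B$, so we may even take remainder order $m+1$.

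\textbf{Step 2 (absorption).} By the triangle inequality,
\[
|B_s(0)(t)|\ge C_1 s-C_2 s^{m+1}.
\]
Choose $s_0\le s_1$ with $C_2 s_0^{m}\le C_1/2$. For $0<s<s_0$ this gives $|B_s(0)(t)|\ge \tfrac{C_1}{2}s>0$ uniformly in $t$. Since $N^{-3/2}$ is trivial, $B_s(0)$ is a nowhere vanishing complex function, hence invertible, and
\[
\|B_s(0)^{-1}\|_{C^0}\le \frac{2}{C_1}\,s^{-1}.
\]
Only $m\ge1$ is needed for the absorption.

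\textbf{Main obstacle.} The only delicate point is Step 1: one must justify that the remainder is uniform on $\Sigma$, not merely pointwise. The first thing to try is the smooth-tame dependence of $B$ on the metric. One also has to check that the identification of the normal data via \cite[Proposition~4.2]{Donaldson2021} varies smoothly in $s$, so that $B_s$ is viewed as a function on a fixed $\Sigma$. Both follow from the smoothness of the maps recalled in Donaldson's deformation theory.
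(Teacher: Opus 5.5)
Your proposal is correct and follows essentially the same route as the paper. Both expand $s\mapsto B_s(0)$ by Taylor's formula with integral remainder $R_{m+1}(s)$, bound that remainder uniformly in $C^0(\Sigma)$ by $\mathcal O(s^{m+1})$ using smoothness of $s\mapsto g_s$ and tameness of $B$, and absorb it into the lower bound $|B_0+s\widetilde B|\ge c_0 s$ from hypothesis (iii), giving $|B_s(0)|\ge \tfrac{c_0}{2}s$. Your remark that the metric-dependent identification of the normal data must vary smoothly in $s$ is a reasonable point that the paper leaves implicit.
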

\begin{proof}
	Using the integral remainder,
	\begin{equation}\label{eq:taylor-B}
		B_s(0)=B_0(0)+s\,\widetilde B + R_{m+1}(s),
		\text{ where }
		R_{m+1}(s):=\int_0^s \frac{(s-\sigma)^m}{m!}\,
		\left.\pa_s^{m+1}B_s(0)\right|_{s=\sigma}\,d\sigma .
	\end{equation}

	Since the map $s\mapsto g_s$ is smooth and $B(\,\cdot\,,\Sigma)$ is tame, the family $\{\pa_s^{m+1}B_s(0)\}_{s\in[0,1]}$ is bounded in $C^0(\Sigma)$. Hence, there exists
	$C_1>0$ such that
	\[
	\|R_{m+1}(s)\|_{C^0(\Sigma)}
	\le C_1\int_0^s (s-\sigma)^m\,d\sigma
	= \frac{C_1}{(m+1)!}\,s^{m+1}.
	\]
	
	On the other hand, assumption~(iii) in Theorem \ref{thm:main-theorem} provides a constant $c_0>0$ such that for all $t\in\Sigma$
	and all $s\in(0,1)$,
	\[
	|B_0(0)(t)+s\,\widetilde B(t)| \ge c_0\, s .
	\]
	Choose $s_0>0$ so that $\frac{C_1}{(m+1)!}\,s^{m}\le \frac{c_0}{2}$ whenever $0<s<s_0$.
	Then for all $t\in\Sigma$ and $0<s<s_0$,
	\[
	|B_s(0)(t)|
	\ge |B_0(0)(t)+s\,\widetilde B(t)| - |R_{m+1}(s)(t)|
	\ge c_0 s - \frac{C_1}{(m+1)!} s^{m+1}
	\ge \frac{c_0}{2}\, s .
	\]
	Thus, $B_s(0)$ is nowhere vanishing and $\|B_s(0)^{-1}\|_{C^0(\Sigma)} \le \frac{2}{c_0}\, s^{-1}.$
\end{proof}
Next, we prove uniform tame bounds for $A_s$ and its first two derivatives. These estimates are needed for Equation \eqref{eq:tame-estimates} in the appendix.
\begin{lemma}\label{lem:tame-estimates-As}
	Fix $\alpha\in\left(0,\frac12\right)$ and $m\in\N$ in \eqref{eq_taylor_expansion_label_m}.
	There exist $d\in\N$, $s_0\in(0,1)$, and constants
	$\delta,\,C_1,\,C_2,\,\{C_k\}_{k\ge d}>0$
	such that for all $s\in(0,s_0)$, all $k\ge d$, and all $u,v,w\in\Gamma(N)$ with
	$\|u\|_{C^{3d,\alpha}(\Sigma)}<\delta$, one has
	\begin{align*}
		\|A_s(u)\|_{C^{k,\alpha}(\Sigma)} &\le C_k\bigl(1+\|u\|_{C^{k+d,\alpha}(\Sigma)}\bigr), \qquad \forall k\ge d,\\
		\Bigl\|\frac{\del A_s(u)}{\del\Sigma}(v)\Bigr\|_{C^{2d,\alpha}(\Sigma)}
		&\le C_1\,\|v\|_{C^{3d,\alpha}(\Sigma)},\\
		\Bigl\|\frac{\del^2 A_s(u)}{\del\Sigma^2}(v,w)\Bigr\|_{C^{2d,\alpha}(\Sigma)}
		&\le C_2\,\|v\|_{C^{3d,\alpha}(\Sigma)}\,\|w\|_{C^{3d,\alpha}(\Sigma)}.
	\end{align*}
	Moreover, the loss of derivatives $d$ does not depend on $m$.
\end{lemma}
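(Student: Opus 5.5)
The plan is to obtain all three estimates from the tameness of the two-variable map $A\colon U_1\times U_2\to\Gamma(\Sigma_0,N^{-1/2})$, as recalled in \eqref{eq:tame_estimates}, together with the smoothness of the family $s\mapsto g_s$. The key observation is that $A_s(u)=A(g_s,u)$ is the restriction of a fixed smooth tame map to a compact curve of metrics. First I would fix $m$. Then $g_s=g_0+\sum_{i\le m}\frac{s^i}{i!}T^{\langle i\rangle}$ is a polynomial in $s$ with smooth coefficients, so for $s\in[0,s_0]$ all norms $\|g_s\|_{C^{j,\alpha}}$ are bounded by constants $G_j$ independent of $s$. I would choose $s_0$ so small that $g_s$ stays in the neighbourhood $U_1$ on which Donaldson's tame estimates hold, and choose $\delta$ so that $\|u\|_{C^{3d,\alpha}}<\delta$ puts $u$ in $U_2$.

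For the first estimate I would apply \eqref{eq:tame_estimates} directly. This gives $\|A_s(u)\|_{C^{k,\alpha}}\le C_k(1+G_{k+d}+\|u\|_{C^{k+d,\alpha}})$, and $G_{k+d}$ is absorbed into a new constant $C_k$.

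For the derivative bounds I would use that in Hamilton's category the derivatives $DA$ and $D^2A$ of a smooth tame map are again smooth tame maps, jointly in $(g,u)$ and the tangent vectors, and linear (resp.\ bilinear) in the tangent vectors. This yields
\[
\|D_uA(g,u)v\|_{C^{k,\alpha}}\le C\bigl(\|v\|_{C^{k+d,\alpha}}+(\|g\|_{C^{k+d,\alpha}}+\|u\|_{C^{k+d,\alpha}})\|v\|_{C^{d,\alpha}}\bigr),
\]
possibly after enlarging $d$ to one common loss for $A$, $DA$ and $D^2A$. Specialising to $k=2d$ gives the bound $\|v\|_{C^{3d,\alpha}}$ times a quantity controlled by $G_{3d}$ and $\|u\|_{C^{3d,\alpha}}<\delta$, hence a constant $C_1$. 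The second derivative is handled the same way, with the bilinear tame estimate giving $C_2\|v\|_{C^{3d,\alpha}}\|w\|_{C^{3d,\alpha}}$.

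The independence of $d$ from $m$ follows because $d$ is the loss of derivatives of the fixed map $(g,v)\mapsto A(g,v)$ near $(g_0,0)$. That loss comes from the elliptic theory of the branched Laplacian and from Donaldson's identification of the normal data, and it makes no reference to the family. The integer $m$ enters only through the constants $G_j$, that is, through the sizes of $T^{\langle i\rangle}$.

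I expect the main obstacle to be justifying that the tame estimates for $A$ and its first two derivatives hold \emph{uniformly on a fixed neighbourhood} of $(g_0,0)$. Donaldson's degenerate situation has $B_0$ vanishing, but the tameness of $A$ itself (unlike the tameness of the inverse) does not use nondegeneracy. I would check this through his construction: $A$ is read off from the leading coefficient of the solution of \eqref{eq_u_s}, pulled back by the diffeomorphism of \cite[Proposition~4.2]{Donaldson2021}. Uniform Schauder estimates in the edge calculus, with constants that are continuous in the $C^{j,\alpha}$ norms of $(g,v)$, then give the required bounds on a small neighbourhood, and the $s$-uniformity follows by compactness of $[0,s_0]$.
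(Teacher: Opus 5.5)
Your proposal is correct and follows the paper's proof. In both, you absorb the uniformly bounded norms of $g_s$ (after shrinking $s_0$) into the tame constants of the fixed map $(g,u)\mapsto A(g,u)$, use smooth tameness of its first and second $u$-derivatives at order $2d$, and observe that $d$ is the loss of that fixed map and hence independent of $m$. The only difference is cosmetic: you quote the homogeneous tame estimate for maps linear in $v$ directly, while the paper writes the inhomogeneous bound $(1+\|u\|_{C^{3d,\alpha}})(1+\|v\|_{C^{3d,\alpha}})$ and then invokes Hamilton's rescaling argument, which is exactly how that homogeneous form is obtained.
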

\begin{proof}
	Since $A$ is tame, there exist $d\in\N$ and constants $\{C_k'\}_{k\ge d}$ such that for $(g,u)\in U$ 
	\[
	\|A(g,u)\|_{C^{k,\alpha}(\Sigma)}
	\le C_k'\bigl(1+\|g-g_0\|_{C^{k+d,\alpha}(M)}+\|u\|_{C^{k+d,\alpha}(\Sigma)}\bigr),\qquad k\ge d.
	\]
	Moreover, after possibly shrinking $s_0$ we may assume that
	$\sup_{s\in(0,s_0)}\|g_s-g_0\|_{C^{k+d,\alpha}(M)}<\infty$ for each $k$.
	Absorbing this uniform bound into the constants yields the first estimate.
	
	Similarly, smooth tameness gives tame bounds for the first and second derivatives in the $u$--direction:
	there exist constants $\{\widetilde C_k\}_{k\ge d}$ and $\{\widehat C_k\}_{k\ge d}$ such that, for $(g_s,u)$ as above,
	\[
	\Bigl\|\frac{\del A_s(u)}{\del\Sigma}(v)\Bigr\|_{C^{2d,\alpha}}
	\le \widetilde C_{2d}\bigl(1+\|u\|_{C^{3d,\alpha}}\bigr)\bigl(1+\|v\|_{C^{3d,\alpha}}\bigr),
	\]
	\[
	\Bigl\|\frac{\del^2 A_s(u)}{\del\Sigma^2}(v,w)\Bigr\|_{C^{2d,\alpha}}
	\le \widehat C_{2d}\bigl(1+\|u\|_{C^{3d,\alpha}}\bigr)\bigl(1+\|v\|_{C^{3d,\alpha}}\bigr)\bigl(1+\|w\|_{C^{3d,\alpha}}\bigr).
	\]
	Using $\|u\|_{C^{3d,\alpha}}<\delta$ and Hamilton's standard rescaling argument
	\cite[Theorem II.2.1.5]{Hamilton1982} (applied to the linear maps $v\mapsto \frac{\del A_s(u)}{\del\Sigma}(v)$ and
	$(v,w)\mapsto \frac{\del^2 A_s(u)}{\del\Sigma^2}(v,w)$),
	we obtain homogeneous estimates of the form stated in the lemma, with constants $C_1,C_2$ depending on $\delta$ but not on $v,w$.
	
	Finally, the loss $d$ is determined by the tame structure of $(g,u)\mapsto A(g,u)$ and therefore is independent of $m$.
\end{proof}
Next, we introduce the inverse for the linearization by setting
\[
\Psi_{s}(u,v):=\frac{2}{3}\,B_{s}(u)^{-1}v,
\]
and define the associated quadratic error term by
\[
\mathcal{E}_{s}(u,w,v)
:=\mathcal{R}\bigl(g_{s}-g_0,\,u,\,w,\,\Psi_{s}(u,v)\bigr),
\]
where $\mathcal{R}$ is defined in \eqref{eq:linearization}.
With these choices, we have
\[
\frac{\delta A_{s}(u)}{\delta\Sigma}\bigl(\Psi_{s}(u,v)\bigr)
= v+\mathcal{E}_{s}\bigl(u, A_{s}(u), v\bigr),
\]
which is precisely the form of \eqref{eq:definition-inverse} in the appendix.
In the next steps we estimate the inverse $\Psi_s$ and the corresponding quadratic error
$\mathcal{E}_{s}$. This way we estimate the dependence of $\widetilde{C}_s$ and $\widetilde{C}_3$ on the parameter $s$ in \eqref{eq:estimate-inverse} and \eqref{eq:estimate-qadratic-error} in section \ref{sec:appendix:setup}.

\begin{lemma}
	\label{lem:tame-estimates-inverse-As}
	Fix $\alpha\in(0,\tfrac12)$ and $m\in\N$ in \eqref{eq_taylor_expansion_label_m}.
	There exist constants $d\in\N$, $s_0\in(0,1)$, $\delta>0$, and $\{\widetilde C_k\}_{k\ge d}>0$
	such that for all $s\in(0,s_0)$, $k\ge d$, $u\in\Gamma(N)$ and $v\in\Gamma(N^{-1/2})$
	with $\|u\|_{C^{3d,\alpha}(\Sigma)}< s\,\delta$, one has
	\[
	\|\Psi_s(u,v)\|_{C^{k,\alpha}(\Sigma)}
	\le \widetilde C_k\, s^{-k-2}\Bigl(
	\|v\|_{C^{k+d,\alpha}(\Sigma)}
	+\|u\|_{C^{k+d,\alpha}(\Sigma)}\,\|v\|_{C^{2d,\alpha}(\Sigma)}
	\Bigr).
	\]
	Moreover, the integer $d$ can be chosen uniformly in $m$.
\end{lemma}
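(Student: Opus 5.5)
Since $\Psi_s(u,v)=\tfrac23 B_s(u)^{-1}v$ is a pointwise product, the estimate comes down to a tame product estimate in Hölder spaces together with a quantitative bound on $B_s(u)^{-1}$ whose $C^{k,\alpha}$ norm blows up polynomially in $s^{-1}$. The plan has four steps.

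\emph{Step 1: lower bound on $|B_s(u)|$.} By Lemma~\ref{lem:B-is-invertible}, $|B_s(0)|\ge \tfrac{c_0}{2}s$ on $\Sigma$. The map $B$ is smooth tame, and its derivative in $u$ is bounded uniformly for $s\in(0,s_0)$ (the same argument as in Lemma~\ref{lem:tame-estimates-As}). Hence
\[
\|B_s(u)-B_s(0)\|_{C^0}\le C\,\|u\|_{C^{3d,\alpha}}\le C\,s\,\delta .
\]
Choosing $\delta$ small compared with $c_0/C$ gives $|B_s(u)|\ge \tfrac{c_0}{4}s$. This is exactly why the hypothesis is $\|u\|_{C^{3d,\alpha}}<s\delta$ rather than $<\delta$.

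\emph{Step 2: tame bound on $B_s(u)$.} The tame estimate for $B$, with $\sup_s\|g_s-g_0\|_{C^{k+d,\alpha}}$ absorbed into the constants, gives
\[
\|B_s(u)\|_{C^{k,\alpha}}\le C_k\bigl(1+\|u\|_{C^{k+d,\alpha}}\bigr),
\]
and in particular $\|B_s(u)\|_{C^{2d,\alpha}}\le C$ under the smallness assumption.

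\emph{Step 3: estimate for the reciprocal.} This is the main step. Write $f=B_s(u)$ with $|f|\ge cs$. By Faà di Bruno, $\nabla^j(f^{-1})$ is a sum of terms $f^{-1-\ell}\prod_{i=1}^{\ell}\nabla^{a_i}f$ with $\sum a_i=j$ and $\ell\le j$. Each such term is at most $s^{-1-\ell}\le s^{-1-j}$ times a product of derivatives of $f$. The Hölder seminorm costs one more factor of $s^{-1}$ through the difference quotient of $f^{-1-\ell}$. Interpolation (Gagliardo--Nirenberg / Hamilton's tame product inequality) bounds the product by $C\|f\|_{C^{k,\alpha}}\|f\|_{C^0}^{\ell-1}$. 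Together with Step 2 this gives
\[
\|B_s(u)^{-1}\|_{C^{k,\alpha}}\le C_k\,s^{-k-2}\bigl(1+\|u\|_{C^{k+d,\alpha}}\bigr).
\]
The bookkeeping of the powers of $s$ is the main obstacle. The crude count gives $s^{-(k+1)-1}$ from the worst term $f^{-k-1}(\nabla f)^k$ plus the $\alpha$-seminorm, which is consistent with the claimed $s^{-k-2}$. If needed, I would absorb the extra factor by taking $d\ge 1$ and bounding the lower norms of $u$ via $s\delta$.

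\emph{Step 4: product estimate.} The tame product estimate
\[
\|fg\|_{C^{k,\alpha}}\le C\bigl(\|f\|_{C^{k,\alpha}}\|g\|_{C^0}+\|f\|_{C^0}\|g\|_{C^{k,\alpha}}\bigr)
\]
applied to $B_s(u)^{-1}v$, together with $\|B_s(u)^{-1}\|_{C^0}\le Cs^{-1}\le Cs^{-k-2}$ and $\|v\|_{C^{k,\alpha}}\le\|v\|_{C^{k+d,\alpha}}$, gives
\[
\|\Psi_s(u,v)\|_{C^{k,\alpha}}\le \widetilde C_k\,s^{-k-2}\Bigl(\|v\|_{C^{k+d,\alpha}}+\bigl(1+\|u\|_{C^{k+d,\alpha}}\bigr)\|v\|_{C^{0}}\Bigr).
\]
The term $\|v\|_{C^0}$ is bounded by $\|v\|_{C^{k+d,\alpha}}$ and by $\|v\|_{C^{2d,\alpha}}$, which yields the stated form.

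Finally, $d$ is the loss of derivatives in the tame structure of $(g,u)\mapsto B(g,u)$. It is independent of $m$, as in Lemma~\ref{lem:tame-estimates-As}; only $s_0$ and the constants depend on $m$, through Lemma~\ref{lem:B-is-invertible}.
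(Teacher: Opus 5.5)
Your proposal is correct and follows the paper's proof essentially step for step. Both arguments use the same three ingredients. First, a $C^0$ lower bound $|B_s(u)|\gtrsim s$, obtained from Lemma~\ref{lem:B-is-invertible} plus the mean value estimate under $\|u\|_{C^{3d,\alpha}}<s\delta$. Second, a bound $\|B_s(u)^{-1}\|_{C^{k,\alpha}}\lesssim s^{-k-2}(1+\|u\|_{C^{k+d,\alpha}})$. Third, the tame H\"older product estimate applied to $\tfrac23 B_s(u)^{-1}v$.

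Your Faà di Bruno and interpolation bookkeeping in Step 3 just makes explicit what the paper calls ``standard H\"older composition/product estimates.'' That count already gives $s^{-k-2}$, so the fallback of absorbing an extra factor via $s\delta$ is unnecessary.
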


\begin{remark}
In Lemma \ref{lem:tame-estimates-As}, we required that $\|u\|_{C^{3d,\alpha}(\Sigma)}<\delta$. Note that in Lemma \ref{lem:tame-estimates-inverse-As}, we additionally require that $\|u\|_{C^{3d,\alpha}(\Sigma)}<s\,\delta$. This is needed for the estimate of $B_s(u)^{-1}$.
\end{remark}

\begin{proof}[Proof of Lemma \ref{lem:tame-estimates-inverse-As}]
	This proof consists of several steps:
	
	\emph{Step 1: Get an explicit $C^0$ bound for $B_s(u)^{-1}$.}
	Like in Lemma~\ref{lem:tame-estimates-As}, one can find a uniform tame estimate for the $\Sigma$--derivative of $B_s$.
	Hence, there exists a constant $C_1 > 0$ such that for each $t\in\Sigma$ and $0< s \ll 1$,
	\[
	|B_s(u)(t)-B_s(0)(t)|
	\le \int_0^1 \bigl|D_\Sigma B_s(xu)[u](t)\bigr|\,dx
	\le C_1\|u\|_{C^{3d,\alpha}(\Sigma)}
	\le C_1 \,s\:\delta.
	\]
	By Lemma~\ref{lem:B-is-invertible} there is a $c_0>0$ such that
	$|B_s(0)(t)|\ge c_0 \,s$ for all $t\in\Sigma$ and all $s$ sufficiently small.
	Choosing $\delta\le \frac{c_0}{2 C_1}$ gives
	$|B_s(u)(t)|\ge \frac{c_0}{2}s$, and hence
	\[
	\|B_s(u)^{-1}\|_{C^0(\Sigma)}\le \frac{2}{c_0 \,s}.
	\]
	
	\emph{Step 2: Get an explicit $C^{k,\alpha}$ bounds for $B_s(u)^{-1}$.}
	Since the inversion map $f\mapsto f^{-1}$ is smooth on the open set
	$\{f:\ |f|\ge \frac{c_0}{2}s\}$, standard Hölder composition/product estimates
	(and the tame bounds for $B_s(u)$ from Lemma~\ref{lem:tame-estimates-As})
	yield constants $C_k>0$ such that, for all $k\ge d$ and $0 < s \ll 1$,
	\[
	\|B_s(u)^{-1}\|_{C^{k,\alpha}(\Sigma)}
	\le C_k\, s^{-k-2}\bigl(1+\|u\|_{C^{k+d,\alpha}(\Sigma)}\bigr).
	\]

	\emph{Step 3: Estimate $\Psi_s(u,v)$.}
	Using $\Psi_s(u,v)=\frac23\,B_s(u)^{-1}v$ and the standard Hölder product estimate, we get
	\[
	\|\Psi_s(u,v)\|_{C^{k,\alpha}}
	\le C_k\Bigl(\|B_s(u)^{-1}\|_{C^0}\|v\|_{C^{k,\alpha}}
	+\|B_s(u)^{-1}\|_{C^{k,\alpha}}\|v\|_{C^0}\Bigr).
	\]
	Combining this with the bounds from Steps 1--2 and $\|v\|_{C^0}\le \|v\|_{C^{2d,\alpha}}$
	gives the claimed inequality.
\end{proof}

\begin{lemma}
	\label{lem:tame-estimates-quadratic-error}
	Fix $\alpha\in(0,\tfrac12)$ and $m\in\N$ in \eqref{eq_taylor_expansion_label_m}.
	There exist constants $d\in\N$, $s_0\in(0,1)$ and $\widetilde C_3>0$ such that for all
	$s\in(0,s_0)$, $u\in\Gamma(N)$ and $v,w\in\Gamma(N^{-1/2})$ with
	$\|u\|_{C^{3d,\alpha}(\Sigma)}< s\,\delta$, one has
	\[
	\|\mathcal E_s(u,w,v)\|_{C^{2d,\alpha}(\Sigma)}
	\le \widetilde C_3\, s^{-3d}\,
	\|w\|_{C^{3d,\alpha}(\Sigma)}\,\|v\|_{C^{3d,\alpha}(\Sigma)}.
	\]
	Moreover, the integer $d$ does not depend on $m$.
\end{lemma}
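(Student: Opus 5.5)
The plan is to treat $\mathcal E_s$ as a composition: the $s$-independent bilinear remainder $\mathcal R$ from \eqref{eq:linearization}, fed with the $s$-dependent argument $\Psi_s(u,v)$. By definition
\[
\mathcal E_s(u,w,v)=\mathcal R\bigl(g_s-g_0,\,u,\,w,\,\Psi_s(u,v)\bigr).
\]
Here $\mathcal R$ is linear in its $A$-slot, filled by $w$, and linear in its normal-variation slot, filled by $\Psi_s(u,v)$. It is also a smooth tame map in $(g,u)$.

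\emph{Step 1: uniform bilinear estimate for $\mathcal R$.} As in the proof of Lemma~\ref{lem:tame-estimates-As}, smooth tameness gives, for $(g_s,u)$ with $\|u\|_{C^{3d,\alpha}}<\delta$, a bound
\[
\|\mathcal R(g_s-g_0,u,w,x)\|_{C^{2d,\alpha}}\le C\,\|w\|_{C^{3d,\alpha}}\,\|x\|_{C^{3d,\alpha}}.
\]
To get it, I would first write the tame bound with factors $(1+\|u\|)(1+\|w\|)(1+\|x\|)$, and then apply Hamilton's rescaling argument in $w$ and $x$ separately to make it bilinear and homogeneous. The metric dependence is absorbed because $\sup_{s<s_0}\|g_s-g_0\|_{C^{k,\alpha}}<\infty$ for each $k$. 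The constant $C$ is independent of $s$ and of $m$, and the loss $d$ is fixed by the tame structure of $A$. After enlarging $d$ once, I can use the same $d$ throughout; this is harmless.

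\emph{Step 2: insert $x=\Psi_s(u,v)$.} I would apply Lemma~\ref{lem:tame-estimates-inverse-As} with $k=3d$. This gives
\[
\|\Psi_s(u,v)\|_{C^{3d,\alpha}}\le \widetilde C_{3d}\,s^{-3d-2}\bigl(\|v\|_{C^{4d,\alpha}}+\|u\|_{C^{4d,\alpha}}\|v\|_{C^{2d,\alpha}}\bigr).
\]
This has two defects compared with the target: the exponent is $-3d-2$ rather than $-3d$, and the norms are $C^{4d,\alpha}$ rather than $C^{3d,\alpha}$. I would fix both by bookkeeping on the loss, namely by redefining $d$. Concretely, I would run Lemma~\ref{lem:tame-estimates-inverse-As} with its own loss $d'$, and then take the $d$ in the present statement large enough, e.g.\ $d\ge d'+2$. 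Then the loss $k+d'$ fits inside $3d$, and the power $s^{-k-2}$ with $k\le 2d$ is dominated by $s^{-3d}$ for $s<1$.

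The $u$-term is handled through the hypothesis $\|u\|_{C^{3d,\alpha}}<s\delta\le\delta$, so $\|u\|\,\|v\|$ is bounded by a constant times $\|v\|$. Alternatively, the inverse estimate could be redone at a lower regularity, which avoids the $4d$ norm altogether. Indeed the $C^{k,\alpha}$ bound on $B_s(u)^{-1}$ only needs $\|B_s(u)\|_{C^{k,\alpha}}$, controlled by $\|u\|_{C^{k+d,\alpha}}$, together with the lower bound $|B_s(u)|\ge c_0 s/2$ from Step~1 of Lemma~\ref{lem:tame-estimates-inverse-As}. The quotient-rule estimate then gives $\|B_s(u)^{-1}\|_{C^{j,\alpha}}\lesssim s^{-j-2}$, hence at most $s^{-3d}$ after the choice of $d$.

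\emph{Step 3: combine.} Putting Step 2 into Step 1 yields
\[
\|\mathcal E_s\|_{C^{2d,\alpha}}\le \widetilde C_3\,s^{-3d}\,\|w\|_{C^{3d,\alpha}}\,\|v\|_{C^{3d,\alpha}},
\]
with $\widetilde C_3$ independent of $s$ and $m$. The main obstacle is the loss-of-derivative accounting in Step 2: matching the $C^{3d}$ input norm on $x$ in Step 1 with the output of the inverse estimate without needing more derivatives of $v$ than $C^{3d,\alpha}$. I would resolve it by using multiplication by $B_s(u)^{-1}$, which loses no derivatives on $v$, and estimating only $\|B_s(u)^{-1}\|_{C^{3d,\alpha}}$. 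That quantity needs $u\in C^{4d,\alpha}$, or a smaller loss if $d$ is re-chosen, and this is where the freedom to enlarge $d$ uniformly in $m$ is used.
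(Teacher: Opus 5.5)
Your architecture matches the paper's: a bilinear tame bound for $\mathcal R$, with $x=\Psi_s(u,v)$ inserted via Lemma~\ref{lem:tame-estimates-inverse-As}, and $d$ enlarged so that everything fits. However, the loss bookkeeping does not close as you set it up, and that bookkeeping is the entire content of this lemma.

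In Step~1 you measure the $x$-slot in $C^{3d,\alpha}$, with the \emph{same} $d$ as in the target norm $C^{2d,\alpha}$. Composing then forces $k=3d$ in the inverse estimate. That produces the factor $s^{-3d-2}$ and the norms $\|v\|_{C^{3d+d',\alpha}}$ and $\|u\|_{C^{3d+d',\alpha}}$. Your sharper remark that multiplication by $B_s(u)^{-1}$ costs no derivatives of $v$ does not help. The quantity $\|B_s(u)^{-1}\|_{C^{3d,\alpha}}$ still carries $s^{-3d-2}$, and it needs $u$ above $C^{3d,\alpha}$, which the hypothesis $\|u\|_{C^{3d,\alpha}}<s\delta$ does not control. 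Enlarging $d$ raises all of these levels together, so it cannot remove either defect. Your claim that the relevant order satisfies $k\le 2d$ does not follow from Step~1 as written.

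The missing step is to keep the intrinsic loss $d_{\mathcal R}$ of $\mathcal R$ separate from $d$; this is a fixed integer, independent of $d$ and $m$. In particular, you should not weaken the $x$-slot to $C^{3d,\alpha}$ in Step~1. Tameness of $\mathcal R$ at level $2d$ gives
\[
\|\mathcal E_s(u,w,v)\|_{C^{2d,\alpha}}\le C\,\|w\|_{C^{2d+d_{\mathcal R},\alpha}}\,\|\Psi_s(u,v)\|_{C^{2d+d_{\mathcal R},\alpha}},
\]
after absorbing the prefactor involving $\|g_s-g_0\|$ and $\|u\|$, which are bounded uniformly in $s$. Now apply Lemma~\ref{lem:tame-estimates-inverse-As} at $k=2d+d_{\mathcal R}$. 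Choose $d\ge d_{\mathcal R}+\max\{d',2\}$, that is, $2d+d_{\mathcal R}+d'\le 3d$ and $2d+d_{\mathcal R}+2\le 3d$. Then every norm of $u$, $v$, $w$ sits at level at most $3d$, so the $u$-term is bounded via $\|u\|_{C^{3d,\alpha}}<s\delta\le\delta$. Moreover $s^{-(2d+d_{\mathcal R})-2}\le s^{-3d}$.

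This is precisely the paper's argument: its $d'$ and $d''$ correspond to $d_{\mathcal R}$ and your $d'$. Your stated condition $d\ge d'+2$ omits the loss $d_{\mathcal R}$.
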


\begin{proof}
	Recall
	\[
	\mathcal{E}_{s}(u,w,v):=\mathcal{R}\bigl(g_{s}-g_0,\,u,\,w,\,\Psi_{s}(u,v)\bigr),
	\]
	where $\mathcal{R}$ is Donaldson's quadratic error map, defined in \eqref{eq:linearization}.
	Since $\mathcal{R}$ is smooth tame, there exist $d'\in\N$ and constants $\{C_k\}_{k\ge d'}$
	such that for all $k\ge d'$,
	\begin{align}\label{eq:Q-tame}
		\|Q((g_s-g_0,u),w,\xi)\|_{C^{k,\alpha}(\Sigma)}
		\le C_k\,\Bigl(1+&\|g_s-g_0\|_{C^{k+d',\alpha}(M)}
		\\+&\|u\|_{C^{k+d',\alpha}(\Sigma)}\Bigr)\,
		\|w\|_{C^{k+d',\alpha}(\Sigma)}\,\|\xi\|_{C^{k+d',\alpha}(\Sigma)} . \notag
	\end{align}
	We apply \eqref{eq:Q-tame} with $k=2d$ and $\xi=\Psi_s(u,v)$:
	\begin{align*}
		\|\mathcal E_s(u,w,v)\|_{C^{2d,\alpha}}
		\le {}& C_{2d}\Bigl(1+\|g_s-g_0\|_{C^{2d+d',\alpha}}+\|u\|_{C^{2d+d',\alpha}}\Bigr)\,
		\|w\|_{C^{2d+d',\alpha}}\,
		\|\Psi_s(u,v)\|_{C^{2d+d',\alpha}} .
	\end{align*}
	
	Next we use Lemma~\ref{lem:tame-estimates-inverse-As}. Writing $d''$ for the loss
	appearing there, we have
	\begin{align*}
		\|\Psi_s(u,v)\|_{C^{2d+d',\alpha}}
		\le \widetilde C_{2d+d'}\, s^{-(2d+d')-2}\Bigl(
		\|v\|_{C^{2d+d'+d'',\alpha}}
		+\|u\|_{C^{2d+d'+d'',\alpha}}\|v\|_{C^{2d,\alpha}}
		\Bigr).
	\end{align*}
	Now choose $d$ large enough so that
	\[
	2d+d'+d''\le 3d
	\qquad\text{and}\qquad
	2d+d'+2\le 3d .
	\]
	Then $\|w\|_{C^{2d+d',\alpha}}\le \|w\|_{C^{3d,\alpha}}$,
	$\|v\|_{C^{2d+d'+d'',\alpha}}\le \|v\|_{C^{3d,\alpha}}$, and
	$s^{-(2d+d')-2}\le s^{-3d}$.
	Moreover, since $\|u\|_{C^{3d,\alpha}}<s\,\delta$ and $s<1$, we also have
	$\|u\|_{C^{3d,\alpha}}\le \delta$.
	Finally, $\|g_s-g_0\|_{C^{3d,\alpha}(M)}$ is uniformly bounded for $s\in(0,s_0)$.  
	Hence, the factor
	$1+\|g_s-g_0\|_{C^{2d+d',\alpha}}+\|u\|_{C^{2d+d',\alpha}}$
	can be absorbed into a constant.
	
	Putting these together yields
	\[
	\|\mathcal E_s(u,w,v)\|_{C^{2d,\alpha}}
	\le \widetilde C_3\, s^{-3d}\,
	\|w\|_{C^{3d,\alpha}}\,\|v\|_{C^{3d,\alpha}},
	\]
	for some $\widetilde C_3>0$.
	The choice of $d$ can be made independent of $m$ because it only needs to dominate
	the tame-loss constants $d',d''$ coming from $Q$ and $\Psi_s$, which are uniform in $m$.
\end{proof}

With the tame estimates from Lemmas~\ref{lem:tame-estimates-As},
\ref{lem:tame-estimates-inverse-As} and \ref{lem:tame-estimates-quadratic-error} in hand,
we can track how the auxiliary constants in
Section~\ref{sec:appendix:constants} depend on the scale parameter \(s\).
For simplicity, we assume that Lemmas~\ref{lem:tame-estimates-As}, \ref{lem:tame-estimates-inverse-As} and \ref{lem:tame-estimates-quadratic-error} hold with the same $d,\delta,$ and $s_0$. 
This can always be done by enlarging $d$, shrinking $\delta$ and $s_0$, and increasing the constants.
While most terms depending on \(u_0\), \(\delta\),
and \(\{g_s\}\) are uniformly bounded as \(s\to 0\), the loss coming from the inverse estimate contributes a factor
\(s^{-t-2}\) at order \(t\).
Substituting these bounds into the definitions in
Appendix~\ref{sec:appendix:constants}, we obtain
\[
\begin{aligned}
	\mathcal V_t
	&= \O(s^{-t-2})\Bigl(\O(1)+\O(1)\bigl(\O(1)+\O(s)\bigr)\Bigr)
	= \O(s^{-t-2}),\\
	V_0
	&= \O(s^{-d-2})\bigl(\O(1)+\O(s)\bigr)
	= \O(s^{-d-2}),\\
	V_1
	&= \O(s^{-T-2})\cdot \O(1)
	= \O(s^{-T-2}).
\end{aligned}
\]
Consequently, for the auxiliary bounds introduced in the Appendix~\ref{sec:appendix:constants} we have
\[
\begin{aligned}
	V
	&= \O(s^{-d-2})+\O(s^{-T-2})
	= \O(s^{-T-2}),\\
	U_t
	&= \O(1)+\O(s^{-t-2})
	= \O(s^{-t-2}),\\
	C_0
	&= \O(s^{-T-2})+\O(s^{-2T-4})
	+\O(s^{-3d})\cdot\bigl(\O(1)\bigr)^2
	= \O(s^{-2T-4}).
\end{aligned}
\]
Therefore, the scale parameter \(\theta_0\) may be chosen so that
\[
\begin{aligned}
	\theta_0
	&= \max\Bigl\{\O(s^{-2T-4}),\,\O(s^{-T-2}),\,\O(s^{-\tau+d-2}),\,\O(s^{-T-3})\Bigr\}\\
	&= \O\bigl(s^{-2(T+2)}\bigr),
\end{aligned}
\]
where \(T = 16d^2+41d+24\) and \(\tau = 8d^2+10d\).
Using Proposition~\ref{prop:explicit-nash-moser}, we can now complete the proof of
Theorem~\ref{thm:main-theorem}.

\begin{proof}[Proof of Theorem~\ref{thm:main-theorem}]
	By Proposition~\ref{prop:explicit-nash-moser}, there exist constants \(C>0\) and
	\(s_0\in(0,1)\), such that if \(s\in(0,s_0)\) and
	\[
	\|A_{s}(0)\|_{C^{2d,\alpha}(\Sigma)}< C\, s^{\,8(16d^2+41d+26)},
	\]
	there exists a smooth section \(u\in\Gamma(N)\) with \(A_{s}(u)=0\).
	By construction,
	\[
	\|A_{s}(0)\|_{C^{2d,\alpha}(\Sigma)}=\O(s^{m+1}).
	\]
	Moreover, Lemmas~\ref{lem:tame-estimates-As}, \ref{lem:tame-estimates-inverse-As} and
	\ref{lem:tame-estimates-quadratic-error} show that the loss parameter \(d\) can be chosen
	independently of \(m\). Hence, if
	\[
	m := 8(16d^2+41d+26),
	\]
	the inequality \(\|A_{s}(0)\|_{C^{2d,\alpha}}<\C\, s^{\,8(16d^2+41d+26)}\) holds for all sufficiently small \(s\), and so the hypotheses of Proposition~\ref{prop:explicit-nash-moser} are satisfied.
	
	Finally, Lemma~\ref{lem:tame-estimates-inverse-As} gives
	\(\|B^{-1}_{s}(u)\|_{C^0(\Sigma)}=\O(s^{-1})\), which implies that the resulting
	\(\ZT\)-harmonic \(1\)-forms are non-degenerate.
\end{proof}
With these estimates in hand, we can now prove the main result,
Theorem~\ref{thm:k-degenerate-theorem}.

\begin{proof}[Proof of Theorem~\ref{thm:k-degenerate-theorem}]
	Let \((\Sigma_0,\MI,\omega_0)\) be a \(k\)-nondegenerate \(\ZT\)-harmonic \(1\)-form on
	\((M,g)\). By Corollary~\ref{cor:k-nondegenerate_apply_linear_theory}, there exists a
	one-parameter family of smooth metrics \(\{g_s\}\) such that the associated
	\(L^2\)-harmonic \(1\)-forms \((\Sigma,\MI,\omega_s)\) satisfy assumptions~(i) and~(ii) of
	Theorem~\ref{thm:main-theorem}.  
	
	Since \(N^{-3/2}\) is a trivial complex line bundle, assumption~(iii) of
	Theorem~\ref{thm:main-theorem} is automatic. Therefore, Theorem~\ref{thm:main-theorem}
	applies and yields the desired conclusion.
\end{proof}

\appendix

\section{A quantitative Nash-Moser theory with quadratic errors}
\label{sec:appendix}
In this paper we require a Nash--Moser implicit function theorem with quadratic
error\footnote{in the sense of Hamilton~\cite{Hamilton1982}.}, adapted to a \emph{non-uniform} scale. Related quantitative estimates are implicit in the arguments of
Donaldson--Lehmann~\cite{DonaldsonLehmann2024}, building on earlier work of
Raymond~\cite{Raymond1989}; see also~\cite{baldi2017nash} for a discussion of similar
scale-sensitive issues. However, for our application it is essential to track the dependence
of every constant appearing in the tame estimates and in the quadratic error bounds. In
particular, we must control the size of the domain on which the Nash--Moser scheme converges
in a scale-dependent (hence non-uniform) manner.

The purpose of this appendix is to collect these bounds in a single place and to state them
explicitly in a form convenient for later use. To make the
dependence of constants transparent, we also include self-contained proofs following the
approach of~\cite{DonaldsonLehmann2024,Raymond1989}.

\subsection{The setup for the Nash--Moser theorem}
\label{sec:appendix:setup}
Let $\F$ and $\G$ be Fr\'echet spaces equipped with graded norms $|\cdot|_s$ and $\|\cdot\|_s$,
respectively.  Let $\phi\colon \F\to \G$ be a smooth map, and fix a base point
$u_0\in \F$.  Throughout we work in a neighbourhood of $u_0$ on which $\phi$ satisfies
tame estimates of order $d$.

\medskip
\noindent
\textit{Tame bounds for $\phi$:}
Assume that there exist an integer $d\in\mathbb{N}$ and constants
\[
\delta>0,\qquad C_1>0,\qquad C_2>0,\qquad \{C_s\}_{s\ge d}\subset (0,\infty),
\]
such that for all $u,v,w\in \F$ with $|u-u_0|_{3d}<\delta$ one has
\begin{equation}
	\label{eq:tame-estimates}
	\begin{cases}
		\forall s\ge d,\quad \|\phi(u)\|_s \le C_s\bigl(1+|u|_{s+d}\bigr),\\[2mm]
		\|\phi'(u)v\|_{2d}\le C_1\,|v|_{3d},\\[2mm]
		\|\phi''(u)(v,w)\|_{2d}\le C_2\,|v|_{3d}\,|w|_{3d}.
	\end{cases}
\end{equation}

\medskip
\noindent
\textit{An approximate right inverse and quadratic error:}
Assume that for each $u\in \F$ with $|u-u_0|_{3d}<\delta$ there exist a linear operator
$\psi(u)\colon \G\to \F$ and a bilinear operator
$\mathcal{E}(u)\colon \G\times \G\to \G$
such that for every $v\in \G$,
\begin{equation}
	\label{eq:definition-inverse}
	\phi'(u)\,\psi(u)v \;=\; v + \mathcal{E}(u)\bigl(\phi(u),v\bigr).
\end{equation}
Moreover, suppose there exist constants $\widetilde C_3>0$ and
$\{\widetilde C_s\}_{s\ge d}\subset (0,\infty)$ such that for all $s\ge d$,
\begin{align}
	\label{eq:estimate-inverse}
	|\psi(u)v|_s
	&\le \widetilde C_s\Bigl(\|v\|_{s+d} + |u|_{s+d}\,\|v\|_{2d}\Bigr),\\
	\label{eq:estimate-qadratic-error}
	\|\mathcal{E}(u)(v,w)\|_{2d}
	&\le \widetilde C_3\,\|v\|_{3d}\,\|w\|_{3d}.
\end{align}
Hamilton's notion of \emph{quadratic error} implies the form \eqref{eq:estimate-qadratic-error}
after a standard rescaling argument \cite[Theorem~II.2.1]{Hamilton1982}.

\medskip
\noindent
\textit{The smoothing operators:}
Finally, assume that there exists a family of smoothing operators
$(S_{\theta})_{\theta>1}\colon \F\to \F$ such that for all $v\in \F$, $\theta>1$,
and all $s,t\ge 0$,
\begin{align}
	|S_\theta v|_s &\le \widehat{C}_{s,t}\,\theta^{\,s-t}\,|v|_t,
	\qquad &&\text{if } s\ge t, \notag\\
	|v-S_\theta v|_s &\le \widehat{C}_{s,t}\,\theta^{\,s-t}\,|v|_t,
	\qquad &&\text{if } s\le t.
	\label{eq:estimates-smoothing-operators}
\end{align}

\begin{remark}
	Whenever possible we follow the notation of Raymond~\cite{Raymond1989}.
	However, Raymond uses the same symbols $C_s$ for the tame bounds on $\phi$,
	for the bounds on the approximate inverse, and for the smoothing estimates.
	Here we separate these families of constants by using different diacritics.
\end{remark}

\subsection{Constants}
\label{sec:appendix:constants}

Under the hypotheses of Section~\ref{sec:appendix:setup}, Raymond~\cite{Raymond1989}
(and subsequently Donaldson--Lehmann~\cite{DonaldsonLehmann2024}) introduce a collection of
auxiliary constants which govern the loss of derivatives and the size of the domain of
validity of the Nash--Moser scheme.  We record these constants here in a single place, with
notation consistent with \cite{Raymond1989} up to the diacritics introduced above.

\medskip
\noindent
\textit{Grading parameters:}
Define
\[
N := 4(2d+1),\qquad
T := 3d + 3 + (2d+3)(N+3),\qquad
\tau := 3d + d(N+3).
\]

\medskip
\noindent
\textit{Auxiliary bounds:}
Next, define for $t\ge d$,
\[
\mathcal{V}_t :=
\widetilde{C}_t\Bigl( C_{t+d} + C_{2d}\bigl(1+\delta+|u_0|_{3d}\bigr)\Bigr),
\qquad
V_0 := \widetilde{C}_d\bigl(1+\delta+|u_0|_{2d}\bigr),
\]
and set
\[
V_1 := \mathcal{V}_T\bigl(1+|u_0|_{T+2d}\bigr),\qquad
V := \widehat{C}_{3d+3,d}\,V_0 + \widehat{C}_{3d+3,T}\,V_1,
\qquad
U_t := 1+\widehat{C}_{t+2d,t}\,\mathcal{V}_t.
\]
Finally, let
\[
C_0 :=
C_1\,\widehat{C}_{3d,\,3d+3}\,V
+\widehat{C}_{3d,\,3d}^2\,V^2
+\widetilde{C}_3\Bigl(
\widehat{C}_{3d,\,2d}
+\widehat{C}_{3d,\,\tau}\,C_\tau\bigl(1+|u_0|_{\tau+d}\bigr)
\Bigr)^2.
\]

\medskip
\noindent
\textit{Scale sequence:}
The size of the admissible neighbourhood is encoded in a sequence $(\theta_k)_{k\ge 0}$.
Set
\[
\theta_0 := \max\Bigl\{\,2,\; U_T,\; U_{\tau-d},\; \widehat{C}_{3d,\,3d}\,\frac{V}{\delta},\; C_0\,\Bigr\},
\qquad
\theta_{k+1} := \theta_k^{5/4}.
\]
As explained in \cite{Raymond1989}, $(\theta_k)$ is increasing and satisfies the summability estimate
\[
\sum_{j\ge 0}\theta_j^{-3} < \theta_0^{-1}.
\]

\medskip
\noindent
With these constants fixed, the Nash--Moser theorem with quadratic error can be stated as follows.

\begin{proposition}[{\cite[Main theorem]{Raymond1989}; \cite[Theorem~A.11]{DonaldsonLehmann2024}}]
	\label{prop:explicit-nash-moser}
	Consider the setup of Section~\ref{sec:appendix:setup}, and let $\theta_0$ be as in
	Section~\ref{sec:appendix:constants}. If
	\[
	\|\phi(u_0)\|_{2d} \le \theta_0^{-4},
	\]
	then there exists $u\in \F$ such that $|u-u_0|_{3d}<\delta$ and $\phi(u)=0$.
\end{proposition}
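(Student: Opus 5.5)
We will prove Proposition~\ref{prop:explicit-nash-moser} with a Newton iteration interleaved with the smoothing operators, following Raymond~\cite{Raymond1989} and keeping every constant of Section~\ref{sec:appendix:constants} explicit. Set
\[
u_{k+1}:=u_k+S_{\theta_k}\,\psi(u_k)\bigl(-\phi(u_k)\bigr).
\]
We then prove by induction on $k$ three estimates, with each exponent fixed in advance by $N,T,\tau$:
\begin{itemize}
\item[(a)] a low-norm smallness bound $\|\phi(u_k)\|_{2d}\le\theta_k^{-4}$, or more precisely a slightly stronger decay compatible with $\theta_{k+1}=\theta_k^{5/4}$;
\item[(b)] a high-norm growth bound $|u_k|_{T+2d}\le V_1\theta_k^{N}$, together with the companion bound at order $\tau+d$;
\item[(c)] step control $|u_{k+1}-u_k|_{3d}\le \widehat C_{3d,3d}V\theta_k^{-3}$.
\end{itemize}
Summing (c) with $\sum_j\theta_j^{-3}<\theta_0^{-1}$ and $\theta_0\ge \widehat C_{3d,3d}V/\delta$ keeps every iterate in the ball $|u_k-u_0|_{3d}<\delta$. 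This is where all the hypotheses \eqref{eq:tame-estimates}--\eqref{eq:estimate-qadratic-error} are valid, so the induction is self-consistent.

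\textbf{Low-norm step.} Write $w_k:=\psi(u_k)(-\phi(u_k))$. We decompose
\[
\phi(u_{k+1})=\bigl[\phi(u_k)+\phi'(u_k)w_k\bigr]+\phi'(u_k)(S_{\theta_k}w_k-w_k)+\int_0^1(1-x)\,\phi''(u_k+x\,\Delta_k)(\Delta_k,\Delta_k)\,dx,
\]
where $\Delta_k:=u_{k+1}-u_k=S_{\theta_k}w_k$. By \eqref{eq:definition-inverse}, the first bracket equals $-\mathcal E(u_k)(\phi(u_k),\phi(u_k))$. We bound the three terms as follows.
\begin{itemize}
\item The first term is at most $\widetilde C_3\|\phi(u_k)\|_{3d}^2$ in $\|\cdot\|_{2d}$, by \eqref{eq:estimate-qadratic-error}. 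We control $\|\phi(u_k)\|_{3d}$ by interpolating between the $2d$-norm and the high $\tau$-norm bound coming from the tame estimate $C_\tau(1+|u_k|_{\tau+d})$. This is exactly the origin of the third summand in $C_0$.
\item The second term is at most $C_1|S_{\theta_k}w_k-w_k|_{3d}\le C_1\widehat C_{3d,T}\theta_k^{3d-T}|w_k|_T$. We bound $|w_k|_T$ using \eqref{eq:estimate-inverse} at order $T$ and the tame bound on $\phi$, which produces $\mathcal V_T$ and hence $V_1$.
\item The third term is at most $C_2\widehat C_{3d,3d}^2|w_k|_{3d}^2$, giving the $V^2$ summand.
\end{itemize}
Collecting terms gives $\|\phi(u_{k+1})\|_{2d}\le C_0\bigl(\theta_k^{-8}+\theta_k^{3d-T+N}\bigr)$ up to fixed powers. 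Since $T$ is chosen so that $T-3d-N$ is large and $\theta_k\ge\theta_0\ge C_0$, this is bounded by $\theta_{k+1}^{-4}=\theta_k^{-5}$.

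\textbf{High-norm step.} We estimate $|u_{k+1}|_{T+2d}\le|u_k|_{T+2d}+\widehat C_{T+2d,T}\theta_k^{2d}|w_k|_T$, and bound $|w_k|_T\le \mathcal V_T(1+|u_k|_{T+2d})$ using \eqref{eq:estimate-inverse} and \eqref{eq:tame-estimates}. The resulting factor is absorbed by $U_T\le\theta_0\le\theta_k$, and this is compatible with $\theta_{k+1}^N=\theta_k^{5N/4}$ because $N/4\ge 2d+1$. The argument at order $\tau-d$ is the same, using $U_{\tau-d}\le\theta_0$.

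\textbf{Conclusion and main obstacle.} By (c), $(u_k)$ is Cauchy in $|\cdot|_{3d}$; interpolating with (b) shows it is Cauchy in every norm, so $u_k\to u$ with $|u-u_0|_{3d}<\delta$. By (a), $\phi(u)=0$. The main obstacle is the bookkeeping in the low-norm step. One must verify that the specific $N,T,\tau$ make all exponent inequalities close simultaneously, and that every multiplicative constant enters only through $C_0$, $U_T$, $U_{\tau-d}$ or $V/\delta$, so that it is dominated by the single choice of $\theta_0$. I would first fix the exponents symbolically and check the inequalities $T-3d-N>5$ and $N\ge 4(2d+1)$. Only after that would I track the constants.
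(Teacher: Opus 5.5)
Your scheme is the paper's (that is, Raymond's). It uses the same iteration, the same three-term splitting of $\phi(u_{k+1})$, the same interpolation of $\|\phi(u_k)\|_{3d}$ between orders $2d$ and $\tau$, the same high-norm growth with $N=4(2d+1)$, and the same summation that keeps the iterates in the $\delta$-ball.

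\textbf{The gap is your final step.} Estimates (b) and (c) give only $|u_{k+1}-u_k|_{3d}\lesssim\theta_k^{-3}$ and $|u_{k+1}-u_k|_{T+2d}\lesssim\theta_k^{N+2d}$. Interpolating these gives decay only for orders $s<3d+3(T-d)/(N+2d+3)$, which is a bounded range. So you do not get convergence in every norm, hence not $u\in\mathcal F$, and a priori $\phi(u)$ is not even defined. You also cannot simply rerun your high-norm step at an arbitrary order $t$. It closed because $U_T\le\theta_0$, whereas $U_t$ for large $t$ is not dominated by $\theta_0$. The missing ingredient is Raymond's Lemma~2, which the paper invokes at exactly this point:
\begin{itemize}
\item iterate $1+|u_{k+1}|_{t+2d}\le U_t\theta_k^{2d}(1+|u_k|_{t+2d})$ at every order $t$;
\item absorb the factor $U_t^k$ into one power of $\theta_k$ using the doubly exponential growth $\theta_k=\theta_0^{(5/4)^k}$, giving $1+|u_k|_{t+2d}\le K_t\,\theta_k^{8d+1}$;
\item interpolate against $|w_k|_d\le V_0\theta_k^{-4}$, with $t$ large relative to $s$, to get $|w_k|_s\le V_s\theta_k^{-3}$ for every $s$.
\end{itemize}
Only then is $(u_k)$ Cauchy in all norms.

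\textbf{Bookkeeping that also needs repair.}
\begin{itemize}
\item Your (c), and the $V^2$ summand in the third term, rest on $|w_k|_{3d+3}\le V\theta_k^{-3}$, which you never derive. It comes from interpolating $|w_k|_d\le V_0\theta_k^{-4}$ (the inverse estimate at order $d$) against $|w_k|_T\le V_1\theta_k^{N}$ at scale $\theta_k^{1/(2d+3)}$.
\item Relatedly, (b) should read $1+|u_k|_{T+2d}\le(1+|u_0|_{T+2d})\theta_k^{N}$, with $V_1$ entering only through the bound on $|w_k|_T$.
\item Once you have $|w_k|_{3d+3}\le V\theta_k^{-3}$, use it for the second term as well:
\[
\|\phi'(u_k)(S_{\theta_k}w_k-w_k)\|_{2d}\le C_1\widehat C_{3d,3d+3}\theta_k^{-3}|w_k|_{3d+3}\le C_1\widehat C_{3d,3d+3}V\theta_k^{-6}.
\]
This is precisely the first summand of $C_0$. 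Your direct route through $\widehat C_{3d,T}\theta_k^{3d-T}|w_k|_T$ has ample room in the exponent. However, its constant $C_1\widehat C_{3d,T}V_1$ is not among the quantities that $\theta_0$ dominates, so it cannot be absorbed with the $\theta_0$ fixed in the statement.
\item The quadratic terms are $O(\theta_k^{-6})$, not $O(\theta_k^{-8})$, because lifting to order $3d$ costs one factor of $\theta_k$. The induction still closes, but with no slack, via $C_0\theta_k^{-6}=(C_0\theta_k^{-1})\,\theta_{k+1}^{-4}\le\theta_{k+1}^{-4}$. This is exactly where $\theta_0\ge C_0$ is used.
\end{itemize}
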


\subsection{Iteration scheme}
Raymond~\cite{Raymond1989} constructs a solution of $\phi(u)=0$ via the iterative scheme
\begin{equation}\label{eq:raymond-iteration}
	v_k := -\psi(u_k)\phi(u_k),
	\qquad
	u_{k+1} := u_k + S_{\theta_k} v_k .
\end{equation}
In this subsection we show that $(u_k)$ converges in the $|\cdot|_{3d+d}$--norm and that its limit
is a zero of $\phi$.  The following estimate is one of the basic ingredients.

\begin{lemma}[{\cite[Lemma~1(iii)]{Raymond1989}}]
	\label{lem:property-3-Raymond}
	Fix $k\in \mathbb{N}$. Assume that for all $0\le j\le k$,
	\[
	|u_j-u_0|_{3d}<\delta
	\qquad\text{and}\qquad
	\|\phi(u_j)\|_{2d}\le \theta_0^{-4}.
	\]
	Then for every $t\ge d$,
	\[
	1+|u_{k+1}|_{t+2d}
	\le U_t\,\theta_k^{2d}\,\bigl(1+|u_k|_{t+2d}\bigr).
	\]
\end{lemma}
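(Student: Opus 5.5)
We estimate $u_{k+1}=u_k+S_{\theta_k}v_k$ with $v_k=-\psi(u_k)\phi(u_k)$ directly, using the triangle inequality:
\[
1+|u_{k+1}|_{t+2d}\le 1+|u_k|_{t+2d}+|S_{\theta_k}v_k|_{t+2d}.
\]
The smoothing estimate \eqref{eq:estimates-smoothing-operators} with $s=t+2d\ge t$ gives
\[
|S_{\theta_k}v_k|_{t+2d}\le \widehat C_{t+2d,t}\,\theta_k^{2d}\,|v_k|_t .
\]
It therefore suffices to show $|v_k|_t\le \mathcal V_t\,(1+|u_k|_{t+2d})$. Once this holds, we obtain
\[
1+|u_{k+1}|_{t+2d}\le \bigl(1+\widehat C_{t+2d,t}\mathcal V_t\,\theta_k^{2d}\bigr)\bigl(1+|u_k|_{t+2d}\bigr),
\]
and since $\theta_k\ge\theta_0\ge 2>1$ we have $1\le\theta_k^{2d}$. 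The prefactor is thus at most $U_t\theta_k^{2d}$, as claimed.

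To bound $|v_k|_t$, we apply the inverse estimate \eqref{eq:estimate-inverse}. This is legitimate because the hypothesis $|u_k-u_0|_{3d}<\delta$ places $u_k$ in the region where the setup holds. It yields
\[
|v_k|_t\le \widetilde C_t\Bigl(\|\phi(u_k)\|_{t+d}+|u_k|_{t+d}\,\|\phi(u_k)\|_{2d}\Bigr).
\]
The first term is controlled by the tame bound \eqref{eq:tame-estimates} for $\phi$ at order $t+d\ge d$:
\[
\|\phi(u_k)\|_{t+d}\le C_{t+d}\bigl(1+|u_k|_{t+2d}\bigr).
\]
For the second term, we use the hypothesis $\|\phi(u_k)\|_{2d}\le\theta_0^{-4}\le 1$, together with $|u_k|_{t+d}\le|u_k|_{t+2d}$ from monotonicity of the grading. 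This bounds the second term by $1+|u_k|_{t+2d}$.

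This is slightly better than what the definition of $\mathcal V_t$ allows, since that constant carries the factor $C_{2d}(1+\delta+|u_0|_{3d})$. That factor reflects Raymond's bookkeeping, in which $\|\phi(u_k)\|_{2d}$ is bounded instead by the tame estimate
\[
\|\phi(u_k)\|_{2d}\le C_{2d}\bigl(1+|u_k|_{3d}\bigr)\le C_{2d}\bigl(1+\delta+|u_0|_{3d}\bigr),
\]
using $|u_k|_{3d}\le |u_0|_{3d}+\delta$. I would follow that route so the constant matches $\mathcal V_t$ exactly, and note that either bound suffices because all constants are positive.

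Combining these estimates gives $|v_k|_t\le\mathcal V_t(1+|u_k|_{t+2d})$, and with it the lemma. The only delicate point is making sure each estimate is applied within its admissible range. The tame estimates require order at least $d$, which holds since $t\ge d$. The inverse estimate requires the neighbourhood condition, which holds at step $k$ because the hypothesis is assumed for all $j\le k$. No induction over $k$ is needed beyond that hypothesis, so I expect no real obstacle beyond careful bookkeeping of the constants.
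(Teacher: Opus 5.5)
Your proof is correct and follows the paper's argument essentially verbatim: the inverse estimate, the tame bounds at orders $t+d$ and $2d$ together with $|u_k|_{3d}\le|u_0|_{3d}+\delta$, the smoothing estimate with $(s,t)=(t+2d,t)$, and $\theta_k^{2d}\ge 1$. One minor caveat concerns your aside: the route via $\|\phi(u_k)\|_{2d}\le\theta_0^{-4}\le 1$ yields the constant $\widetilde C_t(C_{t+d}+1)$, which is dominated by $\mathcal V_t$ only when $C_{2d}(1+\delta+|u_0|_{3d})\ge 1$, so it is neither automatically ``better'' nor automatically sufficient for the constant $U_t$ as defined; since you actually use the tame-bound route, the proof is unaffected.
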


\begin{proof}
	We combine the tame bounds \eqref{eq:tame-estimates} with the inverse estimate
	\eqref{eq:estimate-inverse}.  For $t\ge d$,
	\begin{align*}
		|v_k|_t
		&\le \widetilde C_t\Bigl(\|\phi(u_k)\|_{t+d} + |u_k|_{t+d}\,\|\phi(u_k)\|_{2d}\Bigr) \\
		&\le \widetilde C_t\Bigl( C_{t+d}\bigl(1+|u_k|_{t+2d}\bigr)
		+ |u_k|_{t+d}\, C_{2d}\bigl(1+|u_k|_{3d}\bigr)\Bigr).
	\end{align*}
	Using $|u_k|_{t+d}\le 1+|u_k|_{t+2d}$ and the assumption $|u_k-u_0|_{3d}<\delta$ (hence
	$|u_k|_{3d}\le |u_0|_{3d}+\delta$), we obtain
	\begin{align*}
		|v_k|_t
		&\le \widetilde C_t\Bigl( C_{t+d}+ C_{2d}\bigl(1+\delta+|u_0|_{3d}\bigr)\Bigr)\,
		\bigl(1+|u_k|_{t+2d}\bigr) \\
		&=: \mathcal V_t\,\bigl(1+|u_k|_{t+2d}\bigr).
	\end{align*}
	Thus,
	\begin{equation}
		\label{eq:estimate-that-defines-Wt}
		|v_k|_t \le \mathcal V_t\bigl(1+|u_k|_{t+2d}\bigr).
	\end{equation}
	
	Next, from \eqref{eq:raymond-iteration} and the smoothing estimates
	\eqref{eq:estimates-smoothing-operators}, we have
	\[
	|u_{k+1}|_{t+2d}
	\le |u_k|_{t+2d} + |S_{\theta_k}v_k|_{t+2d}
	\le |u_k|_{t+2d} + \widehat C_{t+2d,t}\,\theta_k^{2d}\,|v_k|_t.
	\]
	Combining this with \eqref{eq:estimate-that-defines-Wt} yields
	\begin{align*}
		1+|u_{k+1}|_{t+2d}
		&\le 1+|u_k|_{t+2d}
		+ \widehat C_{t+2d,t}\,\theta_k^{2d}\,\mathcal V_t\,\bigl(1+|u_k|_{t+2d}\bigr) \\
		&\le \bigl(1+\widehat C_{t+2d,t}\,\mathcal V_t\bigr)\,\theta_k^{2d}\,\bigl(1+|u_k|_{t+2d}\bigr),
	\end{align*}
	where in the last step we used $\theta_k^{2d}\ge 1$.
	Recalling the definition $U_t := 1+\widehat C_{t+2d,t}\,\mathcal V_t$, the claim follows.
\end{proof}

\begin{lemma}[{\cite[Lemma~1(ii)]{Raymond1989}}]
	\label{lem:property-2-Raymond}
	Fix $k\in\mathbb{N}$. Assume that for all $0\le j\le k$,
	\[
	|u_j-u_0|_{3d}<\delta
	\qquad\text{and}\qquad
	\|\phi(u_j)\|_{2d}\le \theta_0^{-4}.
	\]
	Then
	\[
	|v_k|_{3d+3}\le V\,\theta_k^{-3}.
	\]
\end{lemma}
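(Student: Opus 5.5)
We have to show $|v_k|_{3d+3}\le V\theta_k^{-3}$, where $v_k=-\psi(u_k)\phi(u_k)$ and $V=\widehat C_{3d+3,d}V_0+\widehat C_{3d+3,T}V_1$. The form of $V$ suggests splitting $v_k$ with the smoothing operator at scale $\theta_k$:
\[
v_k = S_{\theta_k}v_k + (v_k - S_{\theta_k}v_k).
\]
We then bound the low-frequency part by a low norm of $v_k$ and the high-frequency part by a high norm. That is, we apply \eqref{eq:estimates-smoothing-operators} with $(s,t)=(3d+3,d)$ to the first term and with $(s,t)=(3d+3,T)$ to the second:
\[
|v_k|_{3d+3}\le \widehat C_{3d+3,d}\,\theta_k^{2d+3}\,|v_k|_d+\widehat C_{3d+3,T}\,\theta_k^{3d+3-T}\,|v_k|_T.
\]
It remains to show $|v_k|_d\le V_0\theta_k^{-(2d+6)}$ and $|v_k|_T\le V_1\theta_k^{T-3d-6}$.

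\textbf{Low norm.} Apply \eqref{eq:estimate-inverse} with $s=d$:
\[
|v_k|_d\le \widetilde C_d\bigl(\|\phi(u_k)\|_{2d}+|u_k|_{2d}\,\|\phi(u_k)\|_{2d}\bigr)\le \widetilde C_d\,(1+|u_0|_{2d}+\delta)\,\|\phi(u_k)\|_{2d},
\]
using $|u_k-u_0|_{3d}<\delta$. So $|v_k|_d\le V_0\|\phi(u_k)\|_{2d}$. The hypothesis as stated only gives $\|\phi(u_k)\|_{2d}\le\theta_0^{-4}$, which is too weak. Raymond's lemma is really proved inside an induction in which one also carries the sharper bound $\|\phi(u_j)\|_{2d}\le\theta_j^{-4}$ (Lemma 1(i) of \cite{Raymond1989}). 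I would read the assumption in that form, or add it to the joint induction, so that we get $|v_k|_d\le V_0\theta_k^{-4}$.

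\textbf{High norm.} By \eqref{eq:estimate-that-defines-Wt} with $t=T$, $|v_k|_T\le\mathcal V_T(1+|u_k|_{T+2d})$. Iterating Lemma~\ref{lem:property-3-Raymond} gives
\[
1+|u_k|_{T+2d}\le \prod_{j<k}U_T\theta_j^{2d}\,(1+|u_0|_{T+2d}).
\]
Since $\theta_0\ge U_T$, each factor is at most $\theta_j^{2d+1}$. Because $\theta_{j+1}=\theta_j^{5/4}$, we have $\prod_{j<k}\theta_j\le\theta_0^{\sum_{j<k}(5/4)^j}\le\theta_0^{4(5/4)^k}=\theta_k^4$. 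Hence $|v_k|_T\le V_1\theta_k^{4(2d+1)}=V_1\theta_k^{N}$.

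\textbf{Exponents and main obstacle.} Combining the two cases,
\[
|v_k|_{3d+3}\le \widehat C_{3d+3,d}V_0\,\theta_k^{2d+3-4}+\widehat C_{3d+3,T}V_1\,\theta_k^{3d+3-T+N}.
\]
With $T=3d+3+(2d+3)(N+3)$, the second exponent is at most $-3$. The first exponent, $2d-1$, is not $\le -3$, and this exponent bookkeeping is the main obstacle. The fix I would try first is to obtain a stronger low-norm smallness from the inductive hypothesis, namely $\|\phi(u_k)\|_{2d}\lesssim\theta_k^{-(2d+6)}$, as in Raymond's scheme where the decay of $\phi(u_k)$ is measured against the growing $\theta_k$. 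Failing that, I would interpolate $|v_k|_{3d+3}$ between $|v_k|_d$ and $|v_k|_T$ rather than smoothing crudely. Since the lemma is quoted from \cite[Lemma~1(ii)]{Raymond1989}, I would match the precise inductive hypotheses used there, and then verify that $V=\widehat C_{3d+3,d}V_0+\widehat C_{3d+3,T}V_1$ absorbs all the constants.
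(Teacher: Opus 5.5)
Your two ingredient bounds are correct and match the paper: $|v_k|_d\le V_0\|\phi(u_k)\|_{2d}$ from \eqref{eq:estimate-inverse}, and $|v_k|_T\le V_1\theta_k^{N}$. Your product estimate $\prod_{j<k}\theta_j^{2d+1}\le\theta_k^{N}$ is an equivalent way of running the paper's induction. You are also right that the hypothesis as printed, $\|\phi(u_j)\|_{2d}\le\theta_0^{-4}$, does not yield $|v_k|_d\le V_0\theta_k^{-4}$. The paper's step ``since $\|\phi(u_k)\|_{2d}\le\theta_0^{-4}$ and $\theta_k\ge\theta_0$'' has the inequality pointing the wrong way. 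What is actually needed, and what the induction in Lemma~\ref{lem:property-1-Raymond} supplies, is $\|\phi(u_j)\|_{2d}\le\theta_j^{-4}$. The genuine gap is in your last step. Splitting $v_k$ with $S_{\theta_k}$ charges the low-frequency part a factor $\theta_k^{2d+3}$ and leaves the exponent $2d-1$. At the same time it wastes an enormous margin on the high-frequency part. So the argument as written does not prove the lemma.

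The missing idea is to smooth at the balancing scale $\eta_k:=\theta_k^{1/(2d+3)}$ instead of at $\theta_k$. The definition $T=3d+3+(2d+3)(N+3)$ is built exactly for this. With this choice, \eqref{eq:estimates-smoothing-operators} gives
\begin{align*}
|v_k|_{3d+3}
&\le \widehat C_{3d+3,d}\,\eta_k^{2d+3}\,|v_k|_d+\widehat C_{3d+3,T}\,\eta_k^{3d+3-T}\,|v_k|_T\\
&= \widehat C_{3d+3,d}\,\theta_k\,|v_k|_d+\widehat C_{3d+3,T}\,\theta_k^{-(N+3)}\,|v_k|_T
\le \bigl(\widehat C_{3d+3,d}V_0+\widehat C_{3d+3,T}V_1\bigr)\theta_k^{-3}.
\end{align*}
This is $V\theta_k^{-3}$ with $V$ exactly as defined.

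Your first proposed remedy is a stronger bound $\|\phi(u_k)\|_{2d}\lesssim\theta_k^{-(2d+6)}$. That is not available. Lemma~\ref{lem:property-1-Raymond} only produces $\|\phi(u_{k+1})\|_{2d}\le C_0\theta_k^{-6}$, and the terms $\varphi_1,\varphi_2$ there achieve $\theta_k^{-6}$ only by using the very bound $|v_k|_{3d+3}\le V\theta_k^{-3}$ you are trying to prove. So no bootstrap improvement is possible.

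Your second remedy, interpolating between $|v_k|_d$ and $|v_k|_T$, is the right instinct. With weight $\lambda=(2d+3)/(T-d)=1/(N+4)$ it gives the exponent $-4+\lambda(N+4)=-3$. But it is precisely the optimized smoothing split above, and it has to be carried out. Doing it via $S_{\eta_k}$ is also what produces the constant $V$ itself, rather than something like $C\,V_0^{1-\lambda}V_1^{\lambda}$.
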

\begin{proof}
	Using \eqref{eq:estimate-inverse} with $s=d$ and $|u_k-u_0|_{3d}<\delta$ (so $|u_k|_{2d}\le |u_0|_{2d}+\delta$), we obtain
	\begin{equation}\label{eq:Vk_d_bound}
		|v_k|_d = |\psi(u_k)\phi(u_k)|_d
		\le \widetilde C_d(1+|u_k|_{2d})\|\phi(u_k)\|_{2d}
		\le V_0\,\|\phi(u_k)\|_{2d}.
	\end{equation}
	Since $\|\phi(u_k)\|_{2d}\le \theta_0^{-4}$ and $\theta_k\ge \theta_0$, it follows that
	\begin{equation}\label{eq:Vk_d_bound_theta}
		|v_k|_d \le V_0\,\theta_k^{-4}.
	\end{equation}
	
	Recall $N:=4(2d+1)$ and $T:=3d+3+(2d+3)(N+3)$. We claim that
	\begin{equation}\label{eq:uj_growth_T}
		1+|u_j|_{T+2d}\le (1+|u_0|_{T+2d})\,\theta_j^{N}\qquad\text{for all }0\le j\le k.
	\end{equation}
	Indeed, the case $j=0$ is immediate. If \eqref{eq:uj_growth_T} holds for some $j<k$, then Lemma~\ref{lem:property-3-Raymond} with $t=T$ gives
	\[
	1+|u_{j+1}|_{T+2d}\le U_T\,\theta_j^{2d}\,(1+|u_j|_{T+2d})
	\le U_T\,\theta_j^{N+2d}\,(1+|u_0|_{T+2d}),
	\]
	and using $\theta_{j+1}=\theta_j^{5/4}$ and $N=4(2d+1)$ we have $\theta_j^{N+2d}=\theta_j^{-1}\theta_{j+1}^{N}\le \theta_0^{-1}\theta_{j+1}^{N}$; assuming $\theta_0\ge U_T$ closes the induction.
	
	Combining \eqref{eq:estimate-that-defines-Wt} (with $t=T$) and \eqref{eq:uj_growth_T} yields
	\begin{equation}\label{eq:Vk_T_bound}
		|v_k|_T \le \mathcal V_T(1+|u_k|_{T+2d})
		\le \mathcal V_T(1+|u_0|_{T+2d})\,\theta_k^{N}
		= V_1\,\theta_k^{N}.
	\end{equation}
	
	Set $\eta_k:=\theta_k^{1/(2d+3)}$ and write $v_k=S_{\eta_k}v_k+(1-S_{\eta_k})v_k$. Then \eqref{eq:estimates-smoothing-operators} gives
	\begin{align*}
		|v_k|_{3d+3}
		&\le \widehat C_{3d+3,d}\,\eta_k^{2d+3}|v_k|_d+\widehat C_{3d+3,T}\,\eta_k^{3d+3-T}|v_k|_T \\
		&= \widehat C_{3d+3,d}\,\theta_k|v_k|_d+\widehat C_{3d+3,T}\,\theta_k^{-(N+3)}|v_k|_T \\
		&\le \bigl(\widehat C_{3d+3,d}V_0+\widehat C_{3d+3,T}V_1\bigr)\theta_k^{-3}
		= V\,\theta_k^{-3},
	\end{align*}
	where we used \eqref{eq:Vk_d_bound_theta} and \eqref{eq:Vk_T_bound} in the last line.
\end{proof}

Heuristically, $v_k$ is the correction at the $k$-th step; in particular,
\[
u_{k+1}-u_k = S_{\theta_k}v_k
\]
is a smoothed version of $v_k$. Lemma~\ref{lem:property-2-Raymond} shows that, as long as the
iterates $u_k$ stay in the tame neighbourhood and $\phi(u_k)$ decays at the required rate, the
corrections $v_k$ decay accordingly. We now verify that the assumptions of
Lemmas~\ref{lem:property-3-Raymond} and~\ref{lem:property-2-Raymond} hold for all $k$.

\begin{lemma}[{\cite[Lemma~1(i)]{Raymond1989}}; {\cite[Theorem~A.11]{DonaldsonLehmann2024}}]
	\label{lem:property-1-Raymond}
	Assume that $\|\phi(u_0)\|_{2d}\le \theta_0^{-4}$. Then for all $k\ge 0$,
	\[
	|u_k-u_0|_{3d}<\delta
	\qquad\text{and}\qquad
	\|\phi(u_k)\|_{2d}\le \theta_k^{-4}.
	\]
\end{lemma}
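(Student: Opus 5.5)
We argue by induction on $k$, proving the two claims simultaneously. For $k=0$ both hold trivially: $|u_0-u_0|_{3d}=0<\delta$, and $\|\phi(u_0)\|_{2d}\le\theta_0^{-4}$ is the hypothesis. Suppose both claims hold for all $0\le j\le k$. Since $\theta_j\ge\theta_0$, the hypotheses of Lemmas~\ref{lem:property-3-Raymond} and~\ref{lem:property-2-Raymond} are satisfied up to index $k$, so $|v_j|_{3d+3}\le V\theta_j^{-3}$ for all $j\le k$.

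\emph{Step 1: the iterates stay in the neighbourhood.} Using $u_{k+1}-u_0=\sum_{j\le k}S_{\theta_j}v_j$ and the smoothing estimate with $s=3d\le t=3d+3$, we get
\[
|u_{k+1}-u_0|_{3d}\le \widehat C_{3d,3d+3}\sum_{j\le k}|v_j|_{3d+3}\le \widehat C_{3d,3d+3}V\sum_j\theta_j^{-3}<\widehat C_{3d,3d+3}V\theta_0^{-1}.
\]
The right-hand side is at most $\delta$ because $\theta_0\ge \widehat C_{3d,3d}V/\delta$. Here we must make sure the smoothing constant in the estimate matches the one in the definition of $\theta_0$; I would use the bound $|S_\theta v|_{3d}\le \widehat C_{3d,3d}|v|_{3d}\le \widehat C_{3d,3d}|v|_{3d+3}$, so that exactly the constant $\widehat C_{3d,3d}$ from the definition of $\theta_0$ appears. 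This gives $|u_{k+1}-u_0|_{3d}<\delta$.

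\emph{Step 2: quadratic decay of $\phi$.} Apply Taylor's formula along the segment from $u_k$ to $u_{k+1}$, which stays in the $\delta$-ball by convexity:
\[
\phi(u_{k+1})=\phi(u_k)+\phi'(u_k)S_{\theta_k}v_k+\tfrac12\int\phi''(\cdot)(S_{\theta_k}v_k,S_{\theta_k}v_k).
\]
Write $S_{\theta_k}v_k=v_k-(1-S_{\theta_k})v_k$. Since $\phi'(u_k)v_k=-\phi(u_k)-\mathcal E(u_k)(\phi(u_k),\phi(u_k))$ by \eqref{eq:definition-inverse}, this gives
\[
\phi(u_{k+1})=-\mathcal E(u_k)(\phi(u_k),\phi(u_k))-\phi'(u_k)(1-S_{\theta_k})v_k+\text{(quadratic term)}.
\]
We bound the three pieces as follows.
\begin{itemize}
\item[(a)] The quadratic term is at most $C_2\widehat C_{3d,3d}^2|v_k|_{3d}^2\lesssim \widehat C_{3d,3d}^2V^2\theta_k^{-6}$.
\item[(b)] The smoothing remainder is at most $C_1\widehat C_{3d,3d+3}\theta_k^{-3}|v_k|_{3d+3}\le C_1\widehat C_{3d,3d+3}V\theta_k^{-6}$.
\item[(c)] The error term is at most $\widetilde C_3\|\phi(u_k)\|_{3d}^2$. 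To control $\|\phi(u_k)\|_{3d}$, I would split $\phi(u_k)$ by interpolation, or equivalently bound it by $\widehat C_{3d,2d}\|\phi(u_k)\|_{2d}$ plus a high-norm contribution $\widehat C_{3d,\tau}C_\tau(1+|u_k|_{\tau+d})$ times a negative power of $\theta_k$. The growth of $|u_k|_{\tau+d}$ is controlled by Lemma~\ref{lem:property-3-Raymond} with $t=\tau-d$, using $\theta_0\ge U_{\tau-d}$, in the same way as \eqref{eq:uj_growth_T}.
\end{itemize}
Collecting the three pieces gives $\|\phi(u_{k+1})\|_{2d}\le C_0\theta_k^{-6}$, and since $C_0\le\theta_0\le\theta_k$ this is at most $\theta_k^{-5}=\theta_{k+1}^{-4}$.

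The main obstacle is piece (c). Interpolating $\|\phi(u_k)\|_{3d}$ between the $2d$ and $\tau$ norms must produce a total power at least $\theta_k^{-3}$ per factor, and this is precisely what the choice $\tau=3d+d(N+3)$ is designed for. I expect to need an auxiliary smoothing of $\phi(u_k)$ itself (a smoothing operator on $\G$), or to bound $\|\phi(u_k)\|_{3d}$ using $\phi(u_k)=\phi(u_{k-1})+\ldots$ one step back. I would first try the latter: it expresses $\phi(u_k)$ through $v_{k-1}$, whose $3d+3$ norm is already controlled by Lemma~\ref{lem:property-2-Raymond}.

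Once both steps are established, the induction closes and the lemma follows for all $k$.
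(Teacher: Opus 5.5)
\textbf{Gap in piece (c) of Step 2.} Your Step 1 agrees with the paper's argument, once you replace the first display (it uses a bound on $|S_\theta v|_s$ for $s<t$, which \eqref{eq:estimates-smoothing-operators} does not give) by $|S_{\theta}v|_{3d}\le \widehat C_{3d,3d}|v|_{3d+3}$. Pieces (a) and (b) of Step 2 also agree with the paper. The gap is piece (c), which you rightly call the crux but leave unproved.

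The inequality you sketch, $\|\phi(u_k)\|_{3d}\le \widehat C_{3d,2d}\|\phi(u_k)\|_{2d}+(\text{high norm})\cdot\theta_k^{-(\cdot)}$, drops the loss on the first term. Bounding $\|\cdot\|_{3d}$ by $\|\cdot\|_{2d}$ with no positive power of $\theta_k$ is false.

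The route you say you would try first also fails. You would write $\phi(u_k)=-\mathcal E(u_{k-1})(\phi(u_{k-1}),\phi(u_{k-1}))-\phi'(u_{k-1})(1-S_{\theta_{k-1}})v_{k-1}+(\text{quadratic term})$. But \eqref{eq:tame-estimates} and \eqref{eq:estimate-qadratic-error} control $\phi'$, $\phi''$ and $\mathcal E$ only in the output norm $\|\cdot\|_{2d}$, never in $\|\cdot\|_{3d}$. Even with higher tame bounds, the $\mathcal E$-term would need $\phi(u_{k-1})$ in a still higher norm, so the argument regresses. The only smallness you have is $\|\phi(u_k)\|_{2d}\le\theta_k^{-4}$, and higher norms of $\phi(u_k)$ are merely controlled through the tame bound, so interpolation is unavoidable.

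\textbf{How to close it.} What is needed, and what the paper does, is your first option made precise. You are right that this uses smoothing operators on $\mathcal G$: the paper uses them tacitly, and they exist in the H\"older setting of the application. Split $\phi(u_k)$ at scale $\eta_k:=\theta_k^{1/d}$. Since $\eta_k^{d}=\theta_k$ and, by the choice $\tau=3d+d(N+3)$, $\eta_k^{3d-\tau}=\theta_k^{-(N+3)}$, you get
\[
\|\phi(u_k)\|_{3d}\le \widehat C_{3d,2d}\,\theta_k\,\|\phi(u_k)\|_{2d}+\widehat C_{3d,\tau}\,\theta_k^{-(N+3)}\,\|\phi(u_k)\|_{\tau}.
\]
The first term is at most $\widehat C_{3d,2d}\theta_k^{-3}$ by the inductive hypothesis. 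For the second term, use the tame bound $\|\phi(u_k)\|_{\tau}\le C_\tau(1+|u_k|_{\tau+d})$ together with $1+|u_k|_{\tau+d}\le (1+|u_0|_{\tau+d})\theta_k^{N}$. The latter is proved exactly like \eqref{eq:uj_growth_T}, from Lemma~\ref{lem:property-3-Raymond} with $t=\tau-d$ and $\theta_0\ge U_{\tau-d}$; you identified this part correctly.

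Hence $\|\phi(u_k)\|_{3d}\le\bigl(\widehat C_{3d,2d}+\widehat C_{3d,\tau}C_\tau(1+|u_0|_{\tau+d})\bigr)\theta_k^{-3}$. Then $\|\mathcal E(u_k)(\phi(u_k),\phi(u_k))\|_{2d}$ is bounded by the third summand of $C_0$ times $\theta_k^{-6}$, and your closing step $C_0\theta_k^{-6}\le\theta_{k+1}^{-4}$ finishes the induction. Equivalently, the interpolation inequality $\|f\|_{3d}\lesssim\|f\|_{2d}^{1-\lambda}\|f\|_{\tau}^{\lambda}$ with $\lambda=1/(N+4)$ gives the same factor $\theta_k^{-3}$.
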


\begin{proof}
		We argue by induction on $k$. The case $k=0$ is immediate. Fix $k\in\mathbb{N}$ and assume that
	\[
	|u_j-u_0|_{3d}<\delta
	\quad\text{and}\quad
	\|\phi(u_j)\|_{2d}\le \theta_j^{-4}
	\qquad\text{for all }0\le j\le k.
	\]
	
	\smallskip
	\noindent\textit{Step 1: The iterates stay in the tame neighbourhood.}
	Since $u_{j+1}=u_j+S_{\theta_j}v_j$, we have
	\[
	u_k+tS_{\theta_k}v_k-u_0=\sum_{j=0}^{k-1}S_{\theta_j}v_j+tS_{\theta_k}v_k,
	\qquad t\in[0,1].
	\]
	Using \eqref{eq:estimates-smoothing-operators} with $(s,t)=(3d,3d)$, the monotonicity
	$|v_j|_{3d}\le |v_j|_{3d+3}$, and Lemma~\ref{lem:property-2-Raymond}, we obtain
	\[
	|u_k+tS_{\theta_k}v_k-u_0|_{3d}
	\le \widehat C_{3d,3d}\sum_{j=0}^{k}|v_j|_{3d}
	\le \widehat C_{3d,3d}\sum_{j=0}^{k}|v_j|_{3d+3}
	\le \widehat C_{3d,3d}\,V\sum_{j=0}^{k}\theta_j^{-3}.
	\]
	By the choice of $(\theta_j)$, $\sum_{j\ge 0}\theta_j^{-3}<\theta_0^{-1}$; hence if
	$\theta_0\ge \widehat C_{3d,3d}\,V/\delta$, then
	\[
	|u_k+tS_{\theta_k}v_k-u_0|_{3d}<\delta
	\qquad\text{for all }t\in[0,1],
	\]
	and in particular $|u_{k+1}-u_0|_{3d}<\delta$.
	\\

	\smallskip
	\noindent\textit{Step 2: Decay of $\phi(u_k)$.}
	Taylor's formula with integral remainder gives
	\[
	\phi(u_{k+1})
	=\phi(u_k)+\phi'(u_k)S_{\theta_k}v_k
	+\int_0^1(1-t)\,\phi''(u_k+tS_{\theta_k}v_k)\bigl(S_{\theta_k}v_k,S_{\theta_k}v_k\bigr)\,dt.
	\]
	Since $v_k=-\psi(u_k)\phi(u_k)$ and \eqref{eq:definition-inverse} implies
	$\phi'(u_k)v_k=-\phi(u_k)-\mathcal E(u_k)(\phi(u_k),\phi(u_k))$,
	we can write $\phi(u_{k+1})=\varphi_1+\varphi_2+\varphi_3$ with
	\begin{align*}
	\varphi_1:=&\phi'(u_k)(S_{\theta_k}v_k-v_k), \\
	\varphi_2:=&\int_0^1(1-t)\,\phi''(\cdots)(S_{\theta_k}v_k,S_{\theta_k}v_k)\,dt,\\
	\varphi_3:=&-\mathcal E(u_k)(\phi(u_k),\phi(u_k)).	
	\end{align*}

	For $\varphi_1$, using \eqref{eq:tame-estimates}, \eqref{eq:estimates-smoothing-operators} (with $s=3d$, $t=3d+3$), and Lemma~\ref{lem:property-2-Raymond},
	\begin{equation}\label{eq:estimate-varphi-1}
		\|\varphi_1\|_{2d}
		\le C_1|S_{\theta_k}v_k-v_k|_{3d}
		\le C_1\widehat C_{3d,3d+3}\theta_k^{-3}|v_k|_{3d+3}
		\le C_1\widehat C_{3d,3d+3}V\,\theta_k^{-6}.
	\end{equation}
	For $\varphi_2$, \eqref{eq:tame-estimates} and \eqref{eq:estimates-smoothing-operators} (with $s=t=3d$) yield
	\begin{equation}\label{eq:estimate-varphi-2}
		\|\varphi_2\|_{2d}
		\le C_2|S_{\theta_k}v_k|_{3d}^2
		\le C_2\widehat C_{3d,3d}^2|v_k|_{3d}^2
		\le C_2\widehat C_{3d,3d}^2V^2\,\theta_k^{-6}.
	\end{equation}
	
	To estimate $\varphi_3$, we first bound $\|\phi(u_k)\|_{3d}$.
	Set $\tau:=3d+d(N+3)$ and split
	$\phi(u_k)=S_{\theta_k^{1/d}}\phi(u_k)+(1-S_{\theta_k^{1/d}})\phi(u_k)$.
	Then \eqref{eq:estimates-smoothing-operators} gives
	\[
	\|\phi(u_k)\|_{3d}
	\le \widehat C_{3d,2d}\theta_k\|\phi(u_k)\|_{2d}
	+\widehat C_{3d,\tau}\theta_k^{-N-3}\|\phi(u_k)\|_{\tau}.
	\]
	Using the inductive hypothesis $\|\phi(u_k)\|_{2d}\le \theta_k^{-4}$, the tame bound
	$\|\phi(u_k)\|_{\tau}\le C_\tau(1+|u_k|_{\tau+d})$, and the monotonicity of
	$(1+|u_k|_{\tau+d})\theta_k^{-N}$ (which holds provided $\theta_0\ge U_{\tau-d}$),
	we obtain
	\[
	\|\phi(u_k)\|_{3d}
	\le \Bigl(\widehat C_{3d,2d}+\widehat C_{3d,\tau}C_\tau(1+|u_0|_{\tau+d})\Bigr)\theta_k^{-3}.
	\]
	Hence, by \eqref{eq:estimate-qadratic-error},
	\begin{equation}\label{eq:estimate-varphi-3}
		\|\varphi_3\|_{2d}
		\le \widetilde C_3\|\phi(u_k)\|_{3d}^2
		\le \widetilde C_3\Bigl(\widehat C_{3d,2d}+\widehat C_{3d,\tau}C_\tau(1+|u_0|_{\tau+d})\Bigr)^2\theta_k^{-6}.
	\end{equation}
	
	Combining \eqref{eq:estimate-varphi-1}--\eqref{eq:estimate-varphi-3} and the definition of $C_0$,
	we obtain $\|\phi(u_{k+1})\|_{2d}\le C_0\theta_k^{-6}$.
	Since $\theta_{k+1}=\theta_k^{5/4}$, we have $\theta_k^{-6}=\theta_k^{-1}\theta_{k+1}^{-4}$. Therefore,
	\[
	\|\phi(u_{k+1})\|_{2d}\le (C_0\theta_k^{-1})\,\theta_{k+1}^{-4}\le \theta_{k+1}^{-4},
	\]
	provided $\theta_0\ge C_0$. This closes the induction.
\end{proof}

\medskip
\noindent
For the remainder of the argument we refer to \cite{Raymond1989}. In particular, \cite[Lemma~2]{Raymond1989}
provides a strengthened version of Lemma~\ref{lem:property-2-Raymond}: there exists a family of constants
$\{V_s\}_{s\ge d}\subset(0,\infty)$ such that
\[
|v_k|_s \le V_s\,\theta_k^{-3}\qquad\text{for all }k\ge 0.
\]
The explicit values of $V_s$ are not needed for Proposition~\ref{prop:explicit-nash-moser}, and hence the proof can be copied directly.
Finally, for $s\ge d$,
\[
|u_{k+1}-u_k|_s = |S_{\theta_k}v_k|_s \le \widehat C_{s,s}\,V_s\,\theta_k^{-3},
\]
so $(u_k)$ is Cauchy in each $|\cdot|_s$ and converges to some $u\in\mathcal F$ with $\phi(u)=0$ by continuity.

\bibliography{references.bib}
\bibliographystyle{amsplain}

\end{document}